\def\moverlay{\mathpalette\mov@rlay}
\def\mov@rlay#1#2{\leavevmode\vtop{   \baselineskip\z@skip \lineskiplimit-\maxdimen
		\ialign{\hfil$\m@th#1##$\hfil\cr#2\crcr}}}
\newcommand{\charfusion}[3][\mathord]{
	#1{\ifx#1\mathop\vphantom{#2}\fi
		\mathpalette\mov@rlay{#2\cr#3}
	}
	\ifx#1\mathop\expandafter\displaylimits\fi}
\pgfplotsset{compat=1.18}
\definecolor{mygreen}{RGB}{106, 168, 79}
\definecolor{myblue}{RGB}{98, 160, 234}
\definecolor{myred}{RGB}{246,97,81}
\definecolor{black}{rgb}{0,0,0}
\newcommand*\linenomathpatch[1]{%
	\expandafter\pretocmd\csname #1\endcsname {\linenomath}{}{}%
	\expandafter\pretocmd\csname #1*\endcsname{\linenomath}{}{}%
	\expandafter\apptocmd\csname end#1\endcsname {\endlinenomath}{}{}%
	\expandafter\apptocmd\csname end#1*\endcsname{\endlinenomath}{}{}%
}
\newcommand*\linenomathpatchAMS[1]{%
	\expandafter\pretocmd\csname #1\endcsname {\linenomathAMS}{}{}%
	\expandafter\pretocmd\csname #1*\endcsname{\linenomathAMS}{}{}%
	\expandafter\apptocmd\csname end#1\endcsname {\endlinenomath}{}{}%
	\expandafter\apptocmd\csname end#1*\endcsname{\endlinenomath}{}{}%
}
\let\linenomathAMS\linenomathWithnumbers
\patchcmd\linenomathAMS{\advance\postdisplaypenalty\linenopenalty}{}{}{}
\let\linenomathAMS\linenomathNonumbers
\theoremstyle{plain}
\newtheorem{theorem}{Theorem}[section]
\crefname{theorem}{Theorem}{Theorems}
\crefname{proposition}{Proposition}{Propositions}
\newtheorem{corollary}[theorem]{Corollary}
\crefname{corollary}{Corollary}{Corollaries}
\newtheorem{lemma}[theorem]{Lemma}
\crefname{lemma}{Lemma}{Lemmata}
\newtheorem{conjecture}[theorem]{Conjecture}
\crefname{conjecture}{Conjecture}{Conjectures}
\crefname{problem}{Problem}{Problems}
\newtheorem{claim}[theorem]{Claim}
\crefname{claim}{Claim}{Claims}
\crefname{observation}{Observation}{Observations}
\crefname{setup}{Setup}{Setups}
\newtheorem{fact}[theorem]{Fact}
\crefname{fact}{Fact}{Facts}
\crefname{algorithm}{Algorithm}{Algorithms}
\newtheorem{remark}[theorem]{Remark}
\crefname{remark}{Remark}{Remarks}
\crefname{example}{Example}{Examples}
\theoremstyle{definition}
\newtheorem{definition}[theorem]{Definition}
\crefname{definition}{Definition}{Definitions}
\crefname{construction}{Construction}{Constructions}
\crefname{question}{Question}{Questions}
\numberwithin{equation}{section}
\crefname{section}{Section}{Sections}
\crefname{appendix}{Appendix}{Appendix}
\crefname{figure}{Figure}{Figures}
\newcommand{\fw}[1]{\textcolor{black}{#1}}
\newcommand{\Prob}{\mathbb{P}}
\newcommand{\Exp}{\mathbb{E}}
\DeclarePairedDelimiter{\abs}{\lvert}{\rvert}
\DeclarePairedDelimiter{\floor}{\lfloor}{\rfloor}
\DeclarePairedDelimiter{\set}{\{}{\}}
\renewcommand{\epsilon}{\varepsilon}
\renewcommand{\rho}{\varrho}
\renewcommand{\ge}{\geqslant}
\renewcommand{\le}{\leqslant}
\renewcommand{\geq}{\geqslant}
\renewcommand{\leq}{\leqslant}
\renewcommand{\emptyset}{\varnothing}
\renewcommand{\setminus}{\smallsetminus}
\NewCommandCopy{\subs}{\subset}
\renewcommand{\subset}{\subseteq}
\newcommand{\defn}[1]{\textcolor{Maroon}{\emph{#1}}}
\newcommand{\eps}{\varepsilon}
\newcommand{\comin}{\delta^{\ast}}
\newcommand{\bR}{\mathbb{R}}
\newcommand{\bS}{\mathbb{S}}
\newcommand{\PG}[3]{{P^{(#3)}}(#1;#2)}
\newcommand\restr[2]{{
		\left.\kern-\nulldelimiterspace 
		#1 
		\vphantom{\big|} 
		\right|_{#2} 
}}
\newcommand{\cA}{\mathcal{A}}
\newcommand{\cB}{\mathcal{B}}
\newcommand{\cF}{\mathcal{F}}
\newcommand{\cK}{\mathcal{K}}
\newcommand{\cL}{\mathcal{L}}
\newcommand{\cM}{\mathcal{M}}
\newcommand{\cP}{\mathcal{P}}
\newcommand{\cR}{\mathcal{R}}
\newcommand{\cU}{\mathcal{U}}
\newcommand{\cV}{\mathcal{V}}
\title{Spanning spheres in Dirac hypergraphs}
\date{\today}
\author[F.~Illingworth]{Freddie Illingworth}
\author[R.~Lang]{Richard Lang}
\author[A.~M\"{u}yesser]{Alp M\"{u}yesser}
\author[O.~Parczyk]{Olaf Parczyk}
\author[A.~Sgueglia]{Amedeo Sgueglia}
\address[Illingworth]
{
	Department of Mathematics,
	University College London,
	London, UK
}
\email{f.illingworth@ucl.ac.uk}
\address[Lang]
{
	Departament de Matemàtiques,
	Universitat Politècnica de Catalunya,
	Barcelona, Spain
}
\email{richard.lang@upc.edu}
\address[M\"uyesser]
{
	New College,
	University of Oxford,
	Oxford, UK
}
\email{alp.muyesser@new.ox.ac.uk}
\address[Parczyk]
{
	Zuse Institute Berlin, 
	Department for AI in Society, Science, and Technology,
	Berlin, Germany
}
\email{parczyk@zib.de}
\address[Sgueglia]
{
	Fakult\"at f\"ur Informatik und Mathematik, 
	Universit\"at Passau, 
	Germany.
}
\email{amedeo.sgueglia@uni-passau.de}
\thanks{FI was supported by EPSRC grant EP/V521917/1 and the Heilbronn Institute for Mathematical Research.
	RL was supported by the EU Horizon 2020 programme MSCA (101018431).
	OP was funded by the DFG (EXC-2046/1, project ID: 390685689).
	While conducting this research, AS was affiliated with University College London and supported by the Royal Society.}
\begin{document}
	
	\begin{abstract}
		We show that a $k$-uniform hypergraph on $n$ vertices has a spanning subgraph homeomorphic to the $(k - 1)$-dimensional sphere provided that $H$ has no isolated vertices and each set of $k - 1$ vertices supported by an edge is contained in at least $n/2 + o(n)$ edges.
		This gives a topological extension of Dirac's theorem and asymptotically confirms a conjecture of Georgakopoulos, Haslegrave, Montgomery, and Narayanan.
		
		Unlike typical results in the area, our proof does not rely on the Absorption Method, the Regularity Lemma or the Blow-up Lemma.
		Instead, we use a recently introduced framework that is based on covering the vertex set of the host graph with a family of complete blow-ups.
	\end{abstract}
	
	\maketitle
	
	
	\section{Introduction}
	
	A seminal theorem of Dirac~\cite{Dirac} determines the best possible minimum degree condition for a graph to contain a Hamilton cycle.
	Over the past decades, a great number of generalisations of this result have been obtained~\cite{SS19}, often  in parallel with the development of new proof techniques such as the modern form of the Absorption Method~\cite{RRS08a} as well as the Blow-up Lemma~\cite{KSS98}.
	In many of these Dirac-type results, the embedded cyclical structures are organised along a linear ordering of the vertex set, an inherently $1$-dimensional concept.
	We study a different, topological direction of this problem with early traces in the work of Brown, Erdős, and Sós~\cite{SEB73} that concerns the existence of subgraphs with rich, high-dimensional structure in hypergraphs with good minimum degree conditions.
	
	To motivate this area, we first observe that a Hamilton cycle in a $2$-graph is a set of edges such that the simplicial complex induced by these edges is homeomorphic to the $1$-dimensional sphere, $\bS^1$, with the additional property that the $0$-skeleton of the complex comprises the entire vertex-set of the host graph.
	This topological viewpoint led Gowers and, independently, Conlon (see~\cite{georgakopoulos2022spanning}) to ask which degree conditions force a $3$-graph to contain a spanning copy of the $2$-dimensional sphere $\mathbb{S}^2$.
	To clarify, a \defn{copy of a $(k - 1)$-sphere} in a $k$-graph~$H$ is a set of edges that induce a homogeneous simplicial complex homeomorphic to the $(k - 1)$-sphere,~$\mathbb{S}^{k - 1}$, and such a copy is called \defn{spanning} if the $0$-skeleton of the simplicial complex is the entire vertex-set of $H$.
	
	The original question of Gowers and Conlon was stated in terms of codegrees.
	Formally, the \defn{minimum codegree} of a $k$-graph is the maximum integer $m$ such that every set of $k-1$ vertices is contained in at least $m$ edges.
	For $k=3$, Georgakopoulos, Haslegrave, Montgomery, and Narayanan~\cite{georgakopoulos2022spanning} gave an asymptotically optimal bound by proving that every $n$-vertex $3$-graph with minimum codegree at least $n/3 + o(n)$ contains a spanning copy of a $2$-sphere.
	It is an open conjecture whether this result can be extended to higher uniformities, see also \cref{sec:open-problems}.
	
	One of the main obstacles to combinatorial embeddings of spheres is topological connectivity.
	We capture this notion as follows.
	For a $k$-graph $H$, let $\hat H$ be the \defn{line graph} on the vertex set $E(H)$ with an edge $ef$ whenever $\abs{e \cap f} = k - 1$.
	A subgraph of $H$ is \defn{tightly connected} if it has no isolated vertices and its edges induce a connected subgraph in $\hat{H}$.
	Moreover, we refer to edge maximal tightly connected subgraphs as \defn{tight components}.
	It is easy to see that a $(k - 1)$-sphere is tightly connected.
	Thus, if a $k$-graph $H$ contains a vertex spanning sphere, then $H$ must have a spanning tight component.
	We can use this observation to obtain lower bounds for extremal problems.
	For instance, there are simple $3$-uniform constructions with codegree $n/3 - 1$ and no spanning tight component, and so the above result is sharp.
	
	In light of this, it is natural to ask which codegree condition that is locally supported by a tight component forces a spanning sphere.
	Formally, the \defn{minimum supported co-degree}, $\comin(H)$, of a non-empty $k$-graph $H$, is the maximum integer $m$ such that every $(k - 1)$-set contained in one edge of $H$ is contained in at least $m$ edges of $H$.
	A plausible first guess is that every tightly connected $3$-graph $H$ with $\delta^\ast(H)\geq n/3+o(n)$ contains a spanning copy of a $2$-sphere.
	However, constructions show that in fact $\delta^\ast(H)\geq n/2$ is necessary and this extends to all uniformities $k\geq 2$
	(see \cref{sec:constructions}).
	Given this, Georgakopoulos et al.~\cite{georgakopoulos2022spanning} posed the following conjecture.
	
	\begin{conjecture}\label{conj:mainconj}
		Every tightly connected $k$-graph $H$ of order $n> k \geq 2$ with $\comin(H) \ge n/2$ contains a spanning copy of $\bS^{k - 1}$.
	\end{conjecture}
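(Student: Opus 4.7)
The plan is to upgrade the asymptotic theorem of this paper (which gives the threshold $n/2 + o(n)$) to the exact bound $n/2$ via a stability framework, combined with a direct treatment of small~$n$. Fix a small $\eps > 0$ and split the problem into the \emph{non-extremal} regime $\comin(H) \ge (1/2 + \eps) n$, where the asymptotic theorem applies directly once $n \ge n_0(\eps)$, and the \emph{near-extremal} regime $n/2 \le \comin(H) < (1/2 + \eps) n$, which needs finer structural information.

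For the near-extremal regime the first step is a stability result: classify, up to $o(n^k)$ edges, all $n$-vertex tightly connected $k$-graphs $H$ with $\comin(H) \ge (1/2 - \eps) n$ that fail to contain a spanning $\bS^{k - 1}$. The constructions hinted at in \cref{sec:constructions} give the natural candidates: these should be near-partite configurations with two (or more) ``blocks'' of size $\approx n/2$ joined only through a thin interface. To produce such a classification, one would rerun the complete blow-up cover framework of the paper and track the obstructions to assembling a spanning sphere; each obstruction should translate into an approximate partition of $V(H)$ whose inter-part structure matches one of the extremal templates.

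With this classification in hand, the second step is to show that the extra rigidity from the exact hypothesis $\comin(H) \ge n/2$ (as opposed to $\comin(H) \ge (1/2 - \eps) n$) rules out every near-extremal failure. Concretely, I would prove that an $H$ with $\comin(H) \ge n/2$ that is within $O(\eps n^k)$ edges of an extremal template must contain a small ``bridge'' sub-hypergraph of edges crossing the near-partition. The spanning sphere is then assembled block-by-block: inside each block one applies the blow-up cover framework to a triangulated disc, and the bridge edges are used to glue the discs along a common boundary, yielding the required copy of $\bS^{k - 1}$.

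Finally, the range $k < n < n_0(\eps)$ has to be treated separately, since the asymptotic theorem is vacuous there. For $k = 2$ this is Dirac's theorem; for $k \ge 3$ and small $n$, the hypothesis $\comin(H) \ge n/2$ combined with tight connectivity is highly restrictive and should admit a finite case analysis or a short direct induction on $n$ that uses the triangulated boundary of a simplex as the base sphere. I expect the hardest step to be the stability classification: unlike for Hamilton cycles, spheres carry global topological constraints (consistent triangulation of the high-dimensional cells, agreement of link structures at every face) that make local edge-modifications dangerous, so identifying the correct notion of ``extremal'' and proving that no $H$ with $\comin(H) \ge n/2$ instantiates it will require weaving topological bookkeeping into the combinatorial blow-up machinery developed in this paper.
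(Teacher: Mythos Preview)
The statement you are attempting to prove is \cref{conj:mainconj}, which the paper presents as an \emph{open conjecture}, not a theorem. The paper does not contain a proof of it; its main contribution is \cref{thm:maintheorem}, the asymptotic version with threshold $(1/2+\eps)n$. In \cref{sec:open-problems} the authors explicitly write that proving the exact conjecture ``would also require a more precise analysis of the host graph structure, which could pose a significant challenge,'' and they note that even for tight Hamilton cycles exact results are only known for $k\le 3$. So there is no ``paper's own proof'' to compare your proposal against.

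What you have written is not a proof but a research programme, and you acknowledge as much in your final paragraph. Each of your three ingredients is a substantial unproven statement. First, the stability classification: you assert that near-extremal $H$ must be close to one of the constructions in \cref{sec:constructions}, but you give no mechanism for extracting an approximate bipartition from the failure of the blow-up cover to produce a sphere; tracking ``obstructions'' through \cref{lem:findblowupchain,lem:allocation} does not obviously yield global structure, since those lemmas are existence statements rather than dichotomies. Second, the near-extremal endgame: gluing triangulated discs along a boundary requires the boundaries to be combinatorially identical $(k-2)$-spheres, and you have said nothing about how the ``bridge'' edges furnish such a matching boundary, nor why $\comin(H)\ge n/2$ (rather than $n/2-1$) forces a bridge at all. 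Third, the small-$n$ range: for $k\ge 3$ the claim that tight connectivity plus $\comin(H)\ge n/2$ is ``highly restrictive'' is unsubstantiated, and a ``finite case analysis'' over all $n<n_0(\eps)$ is not finite unless you first bound $n_0$ effectively, which the paper's framework does not do.

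In short, your outline is the natural stability-plus-extremal template, and the paper itself suggests this is the right direction; but none of the three steps is close to being carried out, and the conjecture remains open.
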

	
	We remark that $2$-uniform case of the conjecture corresponds to Dirac's theorem.
	Our main result confirms \cref{conj:mainconj} asymptotically.
	
	\begin{theorem}\label{thm:maintheorem}
		For every $k \geq 2$ and $\eps>0$, there exists $n_0$ such that every tightly connected $k$-graph $H$ of order $n \geq n_0$ with $\comin(H) \ge (1/2+\eps)n$ contains a spanning copy of $\bS^{k - 1}$.
	\end{theorem}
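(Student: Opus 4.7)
I would approach the proof via a ``cover-and-connect'' strategy that parallels one natural route to Dirac's theorem ($k=2$): first cover $V(G)$ with a family of short cycles that consecutively share a single edge, and then glue them into a Hamilton cycle by iteratively taking symmetric differences. In the $k$-uniform setting the cycles are replaced by small spanning copies of $\bS^{k-1}$, and edge-gluing by the combinatorial connect sum along a shared facet: if $S_1$ and $S_2$ are triangulated $(k-1)$-spheres that meet in exactly one common facet $F$ (and share no other face), then $(S_1 \cup S_2) \setminus \{F\}$ is again a triangulated $\bS^{k-1}$.

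For the \emph{building blocks} I would use cross-polytopes $C_k$---spanning spheres in $K^{(k)}_{2,\dots,2}$---together with their blow-ups and stackings. Given $\comin(H) \ge (1/2+\eps)n$, any supported $(k-1)$-set $e$ extends greedily to a cross-polytope on $2k$ vertices: after each vertex is chosen, every relevant $(k-1)$-set still has at least $\Omega(\eps n)$ further common extensions, so an ``antipodal'' vertex for each element of $e$ can be selected in turn. This produces a rich supply of small spanning spheres around every facet of $H$, as well as slightly larger spanning spheres living in complete $k$-partite blow-up sub-hypergraphs.

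The heart of the proof is the \emph{cover step}, where I would invoke the complete-blow-up framework mentioned in the abstract. The goal is to produce a family $B_1,\dots,B_m$ of small $k$-partite blow-up sub-hypergraphs of $H$ whose vertex sets partition $V(H)$, together with a choice of $(k-1)$-faces $F_1,\dots,F_{m-1}$, such that each $F_i$ lies in $B_i \cap B_{i+1}$ and each $B_i$ contains a spanning copy of $\bS^{k-1}$ that uses its prescribed shared facets. The codegree bound $(1/2+\eps)n$ plays exactly the role that Dirac's $n/2$ plays for $k=2$: it rules out the sparse extremal obstructions and guarantees that the blow-up parts, the connectors between them, and the local spheres can all be chosen simultaneously. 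Once this is in place, iterating the combinatorial connect sum along $B_1,\dots,B_m$ welds the local spheres into a single spanning $\bS^{k-1}$.

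The hard part is the cover step: the shared facets must be coordinated so that every connect sum is genuinely valid and so that no spurious face identifications occur which would change the topological type (for instance, accidentally producing a surface of positive genus when $k=3$). Controlling these identifications while simultaneously covering \emph{every} vertex is precisely where the supported codegree hypothesis is invested; the tight-connectivity of $H$ is what further ensures that consecutive blocks can always be bridged by a single common facet, so that the final assembly is one sphere rather than a disjoint union.
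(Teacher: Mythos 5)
Your high-level plan — cover $V(H)$ with a chain of complete blow-ups so that consecutive blocks share a facet, fill each block with a spanning $\bS^{k-1}$ having prescribed entry and exit facets, then connect-sum along the shared facets — is exactly the paper's strategy (\cref{lem:findblowupchain,lem:allocation} plus \cref{rmk:gluecommonface}). But two steps, as written, are gaps rather than deferred details. First, the greedy cross-polytope construction directly in $H$ is not valid: to place the $i$-th antipodal vertex $b_i$ you must pick a vertex lying in the common codegree neighbourhood of $2^{i-1}$ $(k-1)$-sets, and with only $\comin(H)\ge(1/2+\eps)n$ that intersection can already be empty at $i=3$ (four sets of size $\approx n/2$ in $[n]$ need not intersect for small $\eps$); moreover nothing guarantees those $(k-1)$-sets are supported. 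The paper only builds $K_k^{(k)}(2,\dotsc,2)$-spheres inside complete $k$-partite blow-ups (\cref{lem:partitesphere}), never greedily in $H$.

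Second — and this is where essentially all of the proof lives — the cover step cannot be carried out with blow-ups of single edges (complete $k$-partite $k$-graphs) as your ``small $k$-partite blow-up sub-hypergraphs'' suggests: the supported-codegree hypothesis does not yield an almost-perfect tiling by balanced $K_k^{(k)}(a,\dotsc,a)$'s, nor the delicate balancing of part sizes that each block must satisfy in order to carry a spanning sphere with prescribed facets. The paper instead covers with blow-ups of $s$-vertex reduced $k$-graphs for a large constant $s \gg k$, arranged in a chain, and shows via the property-graph and concentration machinery of \cref{sec:property_graph} that these reduced graphs \emph{inherit} the supported-codegree condition even after the perturbations needed to cover leftover vertices and insert connector blocks. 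Then, inside each blow-up, \cref{lem:allocation} builds a sparse ``doubly edge-covering'' sphere on a blow-up of a tight walk through the reduced graph, assigns each pair $p\in\partial_2 R$ an edge via Hall's theorem, and solves a $2$-uniform Dirac matching to distribute the remaining vertices across complete $k$-partite blocks of compatible shapes, before finally deploying the cross-polytope-type spheres. Your outline names the right milestones but defers inheritance, allocation, and balancing — precisely the mathematical content — to ``where the supported codegree hypothesis is invested,'' which is where a proof is required.
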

	
	Spheres have been studied before in other combinatorial settings.
	Historically, the notion can be traced back to the work of Brown, Erdős, and Sós~\cite{SEB73} in the seventies.
	More recently, Luria and Tessler~\cite{LT19} studied spanning spheres in random complexes.
	In the graph setting, Kühn, Osthus, and Taraz~\cite{KOT05} gave asymptotically optimal minimum degree conditions for the existence of a $2$-dimensional sphere in graphs, later generalised by the Bandwidth theorem of B\"{o}ttcher, Schacht, and Taraz~\cite{BST09}.
	A sharper version of the former result was obtained by Kühn and Osthus~\cite{KO05}.
	Finally, the notion of minimum supported degrees was recently studied in the context of Tur\'an-type problems~\cite{Bal21,Pik23}.
	
	We remark that unlike in the graph setting it is not possible to derive optimal minimum supported degree conditions for tight spheres from a bandwidth theorem.
	This is because in the context of hypergraphs, a bandwidth theorem requires structure (namely a tight Hamilton path) with higher degree thresholds than those needed for spheres.
	See \cref{sec:open-problems} for a discussion of tight Hamilton cycles and paths.
	
	Nevertheless, our approach benefits from ideas that have been developed to tackle the problem of embedding hypergraphs of bounded bandwidth.
	In particular, \cref{lem:findblowupchain} is an instance of a forthcoming more general setup~\cite{LS24}, which has been tailored to the requirements of embedding spanning spheres.
	
	\subsection{Proof outline}
	
	There are two well-known approaches to Dirac-type problems in graphs and hypergraphs.
	The first approach consists of a combination of Szemer\'{e}di's Regularity Lemma and the Blow-up Lemma due to Koml\'{o}s, S\'{a}rk\"{o}zy, and Szemer\'{e}di~\cite{KSS98}.
	The other approach is based on the Absorption Method, whose modern form was introduced by R\"{o}dl, Ruci\'{n}ski, and Szemer\'{e}di~\cite{RRS08a}, often also in conjunction with a (hypergraph) Regularity Lemma.
	Georgakopoulos et al.~\cite{georgakopoulos2022spanning} followed the second approach when embedding spanning $2$-spheres in $3$-uniform hypergraphs using a $2$-uniform version of the Regularity Lemma.
	One of the main difficulties in extending their method to higher uniformities is that one needs to invoke a stronger ($k$-uniform) version of the Regularity Lemma.
	While this is presumably not prohibitive, it leads to a quite technical setup with many additional challenges (see for instance~\cite{LS22}).
	In order to avoid such complications, our proof is based on an upcoming framework for embedding large structures into hypergraphs of Lang and Sanhueza-Matamala~\cite{LS24}, which has a precursor in the setting of perfect tilings~\cite{lang2023tiling}.
	
	\subsubsection*{Classic approach}
	This new approach is conceptually inspired by the classic combination the Regularity Lemma and the Blow-up Lemma, which we briefly recapitulate in the following.
	Broadly speaking, the Regularity Lemma allows us to approximate a given host-graph $H$ with a quasirandom blow-up of a \emph{reduced graph} $R$ of constant order, meaning that vertices of $R$ are replaced with (disjoint, linear sized) vertex sets of $H$ and the edges of~$R$ are replaced by quasirandom partite graphs.
	Importantly, the Regularity Lemma guarantees that $R$ approximately inherits degree conditions.
	The Blow-up Lemma then tells us that one can effectively assume that these (quasirandom) partite graphs are complete.
	Ignoring a number of technicalities, this reduces the problem of embedding a large structure $S$ into $H$ to the problem of embedding a large structure $S$ into a complete blow-up of $R$.
	This last step is known as the \emph{allocation} of $S$, and there is now a broad set of techniques available to facilitate this step. Therefore, the reduction to the allocation problem often presents considerable step towards the successful embedding of $S$ into $H$.
	
	\subsubsection*{Our approach}
	In comparison to this, our approach proceeds as follows.
	Instead of approximating the structure of the host graph $H$ with a single quasirandom blow-up, we cover its vertex set $V(H)$ with a family of complete blow-ups $R_1^\ast,\dots,R_\ell^\ast$ contained in $H$, whose reduced graphs $R_1,\dots,R_\ell$ inherit the degree conditions of $H$.
	Importantly, the blow-ups are interlocked in a path-like fashion.
	So, in order to embed a given guest graph $S$ into $H$, we first find $S_1,\dots,S_\ell$ such that $\bigcup S_i = S$, and satisfying that $S_i$ and $S_j$ are fully disjoint unless $j=i+1$. Subsequently, we embed each $S_i$ into the blow-up of $R_i$ such that the connections between $S_i$ and $S_{i+1}$ are in accordance.
	This effectively reduces the problem of embedding $S$ into $H$ to embedding each $S_i$ into a complete blow-up of $R_i$, which is analogous to the above detailed allocation problem.
	
	\subsubsection*{Allocation}
	To find a sphere $S_i$ that covers the blow-up $R_i^\ast$ of $R_i$, we first construct a preliminary sphere $S_i'$ that has (for simplicity of the sketch) a suitable ``entry facet'' $F_e \in E(R_i^\ast)$ within the blow-up of every edge $e \in E(R_i)$.
	This is possible since $R_i$ is tightly connected.
	Notably, $S_i'$ can be taken to be very sparse, as the order of $R_i$ is much smaller than the blow-up part sizes.
	This sparsity allows us to ignore the perturbation created by reserving~$S_i'$.
	We then extend $S_i'$ by gluing additional spheres $B_e$ onto the faces $F_e$, where each $B_e$ is contained in the blow-up of the edges $e \in E(R_i)$.
	Importantly, the spheres $B_e$ are disjoint from each other and from the sphere $S_i'$ (apart from the designated entry facets).
	The main difficulty here is to compute the appropriate sizes of the spheres $B_e$, which we achieve by solving a $2$-dimensional matching problem using Dirac's theorem.
	Together, this leads to the desired spanning sphere $S_i$ in $R^\ast_i$.
	In order to connect the spheres $S_1,\dots,S_t$, we carefully place additional facets (at intersections between each $R_i^\ast$ and $R^\ast_{i+1}$) such that the spanning sphere $S$ can be obtained by gluing together the spheres $S_1,\dots,S_t$ along these facets.
	This is possible since the blow-ups $R_1^\ast, \dots, R_\ell^\ast$ are interlocked in a path-like fashion.

	\subsection{Open problems}\label{sec:open-problems}
	
	Our work leads to a series of interesting open problems, which we discuss in the following.
	
	\subsubsection*{Connectivity}
	
	Georgakopoulos et al.~\cite{georgakopoulos2022spanning} conjectured that any $n$-vertex $k$-graph with minimum codegree at least $n/k$ contains a spanning $(k - 1)$-sphere and confirmed the case $k=3$ approximately.
	We believe that our framework is suitable for attacking the general conjecture.
	A first step towards this would be to show that such a $k$-graph contains a spanning tight component, which is an interesting problem on its own.
	
	\begin{conjecture}
		Every $k$-graph on $n$ vertices with minimum codegree at least  $n/k$ contains a vertex spanning tight component.
	\end{conjecture}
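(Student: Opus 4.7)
The plan is to partition $\binom{V}{k-1}$ according to the tight components of $H$ and derive a contradiction if no component is vertex-spanning. Because $\delta_{k-1}(H) \geq n/k \geq 1$, every $(k-1)$-set is supported by an edge, and all edges through a common $(k-1)$-set are pairwise tightly adjacent; hence each $(k-1)$-set $T$ has a unique tight component. Writing $\cC_1, \ldots, \cC_r$ for the tight components and $A_i = V(\cC_i)$ for their vertex sets, the shadows $\partial \cC_1, \ldots, \partial \cC_r$ partition $\binom{V}{k-1}$. The key structural fact is the \emph{extension-closure property}: if $T \in \partial \cC_i$ and $T \cup \{v\} \in E(H)$, then $T \cup \{v\} \in \cC_i$ and so $v \in A_i$. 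Together with $T \subseteq A_i$ this forces $|A_i| \geq n/k + (k-1)$.

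Suppose for contradiction that no $A_i$ equals $V$, and set $\cI(v) = \{i : v \in A_i\}$. The partition property translates into the statement that the family $\{\cI(v)\}_{v \in V}$ is \emph{$(k-1)$-wise intersecting}: for every $(k-1)$-set $T = \{v_1, \ldots, v_{k-1}\}$, the intersection $\bigcap_j \cI(v_j)$ contains the index $i$ with $T \in \partial \cC_i$. A common element $i^\ast$ of the whole family would yield $A_{i^\ast} = V$, contradicting the assumption.

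For $k = 3$ the argument can be completed: a double count shows $|\cI(v)| \leq 2$, since each $i \in \cI(v)$ contributes at least $n/3$ pairs at $v$ via extension-closure, against a total degree $n - 1$. A pairwise intersecting family of $2$-sets with no common element must be the triangle $\{\{1,2\},\{1,3\},\{2,3\}\}$, inducing a tripartition $V = V_{12} \cup V_{13} \cup V_{23}$. Summing the codegree bound for each cross pair, after bounding its extension count by within-part colour counts, yields
$(|V_{12}| - 1)/2 + (|V_{13}| - 1)/2 + (|V_{23}| - 1)/2 \geq n/2$, i.e.\ $(n - 3)/2 \geq n/2$, the desired contradiction.

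The main obstacle will be extending this to $k \geq 4$, where the Helly-type step fails: the family of $(k-1)$-subsets of $[k]$ is $(k-1)$-wise intersecting without a common element, and the naive double count no longer bounds $|\cI(v)|$ usefully. A plausible route is to replace the Helly step with an inductive analysis through the links $L_v$, whose codegree $\delta_{k-2}(L_v) \geq n/k$ falls just short of the critical threshold $(n-1)/(k-1)$, combined with iterative application of extension-closure inside individual components. An alternative bypasses the partition viewpoint entirely and grows a single tight component greedily vertex-by-vertex, using extension-closure to absorb new vertices and deriving a contradiction from any proper sub-component via an isoperimetric-style estimate pegged to the $n/k$ threshold.
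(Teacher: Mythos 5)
The statement you are proving is stated in the paper as an open conjecture; the paper offers no proof, only the remark that its codegree threshold is best possible by the constructions from Georgakopoulos et al. So there is no ``paper's proof'' to compare against, and your attempt should be assessed on its own terms.

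Your argument is genuine progress. The decomposition into tight components, the fact that $\partial \cC_1, \dots, \partial \cC_r$ partition $\binom{V}{k-1}$ (since $\delta_{k-1}(H) \geq n/k \geq 1$ supports every $(k-1)$-set, and all edges through a fixed $(k-1)$-set are pairwise tightly adjacent), the extension-closure property, and the translation to the statement that $\{\cI(v)\}_{v\in V}$ is $(k-1)$-wise intersecting with no common element are all correct. For $k=3$ the rest also goes through: the degree bound $|\cI(v)| \leq 2$ is sound (each $i \in \cI(v)$ forces at least $n/3$ pairs at $v$ inside the disjoint shadow $\partial \cC_i$, against $n-1$ available pairs); $|\cI(v)| \geq 2$ since a singleton $\cI(v)$ would give a common element; the triangle $\{\{1,2\},\{1,3\},\{2,3\}\}$ is the unique pairwise-intersecting family of $2$-sets without a common element; and the final count is correct once unpacked. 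Explicitly, writing $D_i(u)$ for the within-part colour classes, a cross pair such as $\{u,w\}$ with $u \in V_{12}$, $w \in V_{13}$ lies in $\partial \cC_1$, and any extension $x$ of it has $\{u,x\}$ and $\{w,x\}$ in $\partial \cC_1$, forcing $x \in D_1(u) \cup D_1(w)$; summing $\mathrm{codeg}(\{u,w\}) + \mathrm{codeg}(\{u,y\}) + \mathrm{codeg}(\{w,y\}) \geq n$ against $(|V_{12}|-1)+(|V_{13}|-1)+(|V_{23}|-1) = n-3$ gives the contradiction. This gives a clean, self-contained proof of the $k=3$ case of the conjecture, with the \emph{exact} threshold $n/3$, which is stronger than anything established in the paper.

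Where the argument genuinely stops is at $k \geq 4$, and you have correctly identified the barrier: the shadows $\partial C_i$ restricted to the $(k-1)$-sets of a fixed $k$-element set behave like the $(k-1)$-subsets of $[k]$, which are $(k-1)$-wise intersecting with no common element, so no Helly-type conclusion is available. Your proposed workarounds (inductive link analysis, greedy growth of a single component with an isoperimetric bound) are reasonable directions but not yet arguments; the proposal as written is a correct proof only for $k \in \{2,3\}$ and a plan sketch for $k \geq 4$, and you should state this scope explicitly rather than presenting it as a proof of the full conjecture.
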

	
	We remark that this minimum degree is best possible due to the constructions for the original conjecture~\cite{georgakopoulos2022spanning}.
	
	Another question in this direction raised in ~\cite{georgakopoulos2022spanning} is how large a sphere can be found in a $3$-graph with a given minimum codegree below $n/3$. As noted in~\cite{georgakopoulos2022spanning}, it is possible here that tight components are far from spanning, but in this situation pairs supported within small tight components will have large codegree relative to the size of the component, and hence our \cref{thm:maintheorem} could potentially help in resolving this problem.

	\subsubsection*{Tight cycles}
	
	Given our main result, it is natural to ask about the minimum supported codegree threshold for tight Hamilton cycles.
	Formally, a $k$-uniform \defn{tight cycle} comes with a cyclical ordering of its vertices such that every $k$ consecutive vertices form an edge.
	A classic result of R\"{o}dl, Ruci\'{n}ski, and Szemer\'{e}di~\cite{RRS08a} states that an $n$-vertex graph with minimum codegree at least $n/2 + o(n)$ contains a tight Hamilton cycle.
	As it turns out, this is no longer true for supported degree.
	
	Indeed, consider an $n$-vertex $k$-graph $G$ with $n$ divisible by $k$ and a partition $X \cup Y$ of its vertex-set with $\abs{Y} = n/k + 1$ such that $G$ contains all edges with at least $k - 1$ vertices in $X$.
	Note that any matching in $G$ misses at least one vertex of $Y$, so $G$ does not have a perfect matching and by extension no tight Hamilton cycle.
	On the other hand, $G$ satisfies $\delta^\ast(G) = (1 - 1/k)n - (k - 1)$.
	
	We believe that these constructions are optimal.
	
	\begin{conjecture}
		For every $k \geq 2$, there is some $n_0$ such that every $k$-uniform tightly connected $G$ on $n \geq n_0$ vertices with $\comin(G) \geq (1 - 1/k) n$ contains a tight Hamilton cycle.
	\end{conjecture}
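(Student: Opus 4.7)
The plan is to adapt the blow-up chain framework developed in this paper to the tight Hamilton cycle setting. In analogy with \cref{thm:maintheorem}, I would first target an asymptotic version where $\comin(G) \geq (1 - 1/k + \eps)n$, and handle the exact threshold regime separately via a stability argument.

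First, I would invoke an analogue of \cref{lem:findblowupchain} to cover $V(G)$ with a sequence of complete blow-ups $R_1^\ast,\dots,R_\ell^\ast$ interlocked in a path-like fashion, whose reduced graphs $R_1,\dots,R_\ell$ inherit the supported codegree condition up to lower-order terms. Since a tight Hamilton cycle is itself a one-dimensional cyclic object whose topology matches that of the chain, the global task reduces to finding, inside each $R_i^\ast$, a tight Hamilton path whose initial and terminal $(k-1)$-tuples lie in the overlap with $R_{i-1}^\ast$ and $R_{i+1}^\ast$. Gluing these paths along the shared $(k-1)$-facets and closing across $R_\ell^\ast$ and $R_1^\ast$ then yields the desired tight Hamilton cycle in $G$.

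The allocation problem inside a single blow-up becomes: given $R$ with $\comin(R) \geq (1 - 1/k + \eps)\abs{V(R)}$ and a complete blow-up $R^\ast$ with part sizes $(n_v)_{v \in V(R)}$, find a closed tight walk in $R$ visiting each vertex $v$ exactly $n_v$ times, with prescribed start and end $(k-1)$-tuples. Since the threshold $(1-1/k)\abs{V(R)}$ coincides with the perfect matching threshold implicit in the construction from the excerpt, perfect matchings are the natural building blocks. I would first establish tight Hamiltonicity of the reduced graph $R$ itself (using fractional tiling tools in the spirit of~\cite{LS24}), and then use a tight Hamilton cycle $C$ in $R$ as a \emph{backbone}: repeatedly traversing $C$ produces a balanced tight walk, while local deviations accommodate non-uniform part sizes. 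Determining the number of deviations at each position should reduce to a matching-type rounding problem, analogous to the two-dimensional matching computation used for spheres in this paper.

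The main obstacle will be controlling these deviations. A deviation consists of leaving the backbone at a $(k-1)$-facet, performing a short closed tight walk, and re-entering, which is only feasible if the \emph{tight link graph} of $R$ around that facet is sufficiently connected. Establishing such connectivity together with the arithmetic balance needed to match the prescribed multiplicities, purely from the supported codegree assumption at the exact threshold $(1 - 1/k)\abs{V(R)}$, is delicate and is, I believe, the combinatorial crux of the argument; it should involve a variant of the fractional absorption tools of~\cite{LS24} combined with an Eulerian-type adjustment on the auxiliary multigraph that records deviations. Finally, the sharp threshold $\comin(G) \geq (1 - 1/k)n$ would be treated by a stability dichotomy, showing that any near-extremal $G$ is structurally close to the partition construction with a deficient set of size $\sim n/k$, and then handling such hypergraphs directly using the extra rigidity imposed by the partition.
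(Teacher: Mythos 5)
The statement you were asked to prove is stated in the paper as a \emph{conjecture}, not a theorem: the authors supply the lower-bound construction showing $(1-1/k)n$ would be sharp but give no proof, and explicitly remark that even the asymptotic version (with $(1-1/k+\eps)n$) has only very recently been confirmed by Mycroft and Z\'{a}rate-Guer\'{e}n~\cite{MZ25}. So there is no in-paper proof to compare your sketch against, and a successful proof here would in fact go well beyond what the paper claims.

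On its own terms, your sketch is a plausible research plan rather than a proof, and the crucial allocation step is left open in a way that matters. Inside each complete blow-up $R^\ast$ you need a spanning tight path with prescribed initial and terminal $(k-1)$-tuples; your ``backbone plus deviations'' idea has two real gaps. First, you propose to first find a tight Hamilton cycle in the reduced graph $R$ under the inherited supported-codegree condition, but that is essentially the (asymptotic) conjecture itself restricted to bounded order; neither~\cite{LS24} nor anything in this paper hands you that for free, and one would need a direct argument for constant-size $R$. Second, the Eulerian-type rounding that converts the multiplicity vector $(n_v)_{v\in V(R)}$ into a legal closed tight walk with short, facet-compatible detours is precisely where the supported-codegree hypothesis must be used, and you have not identified the lemma that makes it work; in the sphere setting the analogous step is the two-dimensional matching argument of \cref{lem:filling}, which has no obvious one-dimensional analogue because a tight path imposes a strict linear ordering rather than merely a partition into small spheres. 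Finally, the exact threshold $(1-1/k)n$ would require a separate stability analysis, for which the paper offers no tools; the framework here is expressly designed for approximate thresholds, and you should not expect it to yield the sharp constant without substantial additional extremal work.
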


    Very recently, this conjecture has been confirmed approximately by Mycroft and Zárate-Guerén~\cite{MZ25}.
	
	\subsubsection*{Exact bounds}
	Our main result gives an asymptotic solution to \cref{conj:mainconj}.
	To prove the exact conjecture, further work is required possibly including a stability analysis.
	It is conceivable that our embedding setup, namely \cref{lem:findblowupchain}, can be extended to account for this by using tools from the theory of property testing (see \cite{lang2023tiling} for more details).
	On the other hand, removing the error term in \cref{thm:maintheorem}, would also require a more precise analysis of the host graph structure, which could pose a significant challenge.
	We note that even in the setting of tight Hamilton cycles in $k$-graphs, which has been much more heavily investigated than the topological variant considered here, exact results are only known for $k \leq 3$~\cite{Dirac,rodl2011dirac}.
	
	\subsubsection*{Vertex degree}
	
	Since the $3$-uniform minimum codegree threshold for spanning $2$-spheres is well-understood, at least in the approximate sense, it is natural to ask what happens for $2$-spheres under minimum vertex-degrees.
	The following construction gives a lower bound.
	
	Consider a $3$-graph $G$ whose vertices are partitioned by sets $X$, $Y$, and $Z$ each of size $n/3$ and whose edges are composed of all edges of type $XXY$, $YYZ$, and $ZZX$ as well as all edges inside each of $X$, $Y$, and $Z$.
	It is not hard to see that $G$ does not have a spanning tight component and hence no spanning $2$-sphere.
	On the other hand, a simple calculation shows that every vertex of $G$ is on at least $(4/9 - o(1)) \binom{n}{2}$ edges.
	
	It is plausible, that $3$-graphs above this degree contain spanning $2$-spheres.
	To formalise this, we denote by $\delta_1(G)$ the maximum $m$ such that every vertex of a $3$-graph $G$ is contained in at least $m$ edges.
	
	\begin{conjecture}
		For every $k \geq 2$ and $\eps > 0$, there is $n_0$ such that any $k$-graph $G$ on $n\geq n_0$ vertices with $\delta_1(G) \geq (4/9 + \eps ) \binom{n}{2}$ contains a spanning $2$-sphere.
	\end{conjecture}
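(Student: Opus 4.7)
The plan is to follow the general strategy of the paper, combining a structural result about tight connectivity with the blow-up chain framework of \cref{lem:findblowupchain}, but adapted to the vertex-degree setting. The starting observation is that the extremal construction (the partition of $V(G)$ into parts $X,Y,Z$ with edges of type $XXY$, $YYZ$, $ZZX$, and all edges inside each part) fails precisely because it splits into three pairwise vertex-disjoint tight components. Hence the first step is to prove a structural lemma: any $3$-graph $G$ on $n$ vertices with $\delta_1(G) \geq (4/9 + \eps)\binom{n}{2}$ admits a spanning tight component. I would attempt this by contradiction: if $G$ has no spanning tight component, then analysing how vertex links must sit inside the various components, together with a stability argument, should force a near-balanced $3$-partition of $V(G)$ approximating the extremal construction, and hence $\delta_1(G) \leq (4/9 + o(1))\binom{n}{2}$, a contradiction.

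With spanning tight connectivity in hand, the natural next step is to invoke (an adaptation of) \cref{lem:findblowupchain}. The framework reduces the embedding problem to finding a chain of complete blow-ups $R_1^\ast,\dots,R_\ell^\ast$ inside $G$, covering $V(G)$ and interlocked in a path-like manner, whose reduced graphs $R_i$ inherit appropriate local conditions and joint tight connectivity. In our setting, we want each $R_i$ to inherit the vertex-degree condition $\delta_1(R_i) \geq (4/9 + \eps/2)\binom{|V(R_i)|}{2}$. Since the framework of \cref{lem:findblowupchain} is phrased in terms of supported codegree, this step requires either re-proving a vertex-degree variant of the chain lemma or, more promisingly, showing that the structural lemma from the previous paragraph upgrades to an effective supported-codegree condition on a large ``core'' sub-hypergraph of $G$ — intuitively, the tight connectivity hypothesis together with $\delta_1 \geq (4/9 + \eps)\binom{n}{2}$ should prevent the occurrence of too many pairs of low codegree without producing local obstructions to spanning tightness.

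Once a valid chain is produced, the allocation problem inside each $R_i^\ast$ mirrors the one in the paper's sketch: construct a sparse preliminary sphere $S_i'$ visiting every edge of $R_i$ (using tight connectivity of $R_i$), and extend $S_i'$ by gluing carefully-sized spheres $B_e$ along entry facets, with the sizes determined by a $2$-dimensional matching problem solved via Dirac's theorem. The new difficulty is that pair-degrees inside each $R_i^\ast$ are no longer uniformly large, so the auxiliary matching problem has to be solved with respect to an unbalanced capacity profile; this should be manageable by a weighted variant of Dirac's theorem, provided the core obtained in the previous paragraph has sufficiently controlled local structure.

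I expect the main obstacle to be the passage from vertex degree to a usable local codegree condition needed to drive the blow-up chain and the allocation — this is where the vertex-degree setup genuinely departs from the codegree world, and where the sharp constant $4/9$ must enter. The tight-connectivity lemma is a prerequisite but essentially a one-shot stability argument; the allocation step is then a technical but more-or-less standard adaptation, once the host graph has been pinned down well enough for \cref{lem:findblowupchain} to apply.
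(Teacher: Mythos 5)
This statement is an \emph{open conjecture} in the paper, not a theorem: the authors present it in their ``Open problems'' section (under the ``Vertex degree'' heading) precisely because they do not have a proof. There is therefore no argument in the paper against which your proposal can be checked; what you have written is a sketch of a possible attack on an unsolved problem, and it must be evaluated on its own terms.

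On those terms, the sketch contains a genuine gap that you yourself flag as the ``main obstacle'' but do not resolve, and which I believe is fatal in its current form. \cref{lem:findblowupchain} (and the allocation machinery built on it) is driven by the hypothesis $\comin(H)\geq (1/2+\eps)n$. A minimum vertex-degree condition $\delta_1(G)\geq(4/9+\eps)\binom{n}{2}$ in a $3$-graph does not imply any nontrivial lower bound on supported codegree, and you give no mechanism for ``upgrading'' one into the other. The extremal construction itself illustrates the difficulty: there, pairs with one vertex in $X$ and one in $Y$ have codegree roughly $n/3$, far below the $(1/2+\eps)n$ needed by the chain lemma, and slightly densifying to clear the $4/9$ vertex-degree bar will not automatically push supported codegrees up to $n/2$. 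Your suggestion to pass to a ``core'' subhypergraph with controlled codegree is exactly where the sharp constant $4/9$ would have to enter, but as written it is an unsupported assertion, not a lemma, and it is unclear that such a core exists or covers almost all vertices. Until that passage is made precise, the rest of the outline (the blow-up chain, the allocation via a weighted matching) is hanging on nothing. Note also a small inaccuracy in your motivating observation: the three tight components of the $XXY/YYZ/ZZX$ construction (namely $X$-edges${}+XXY$, $Y$-edges${}+YYZ$, $Z$-edges${}+ZZX$) are not pairwise vertex-disjoint --- each pair of them shares one of the parts $X,Y,Z$ --- so the structural stability lemma you propose should be phrased in terms of the absence of a \emph{spanning} tight component rather than a vertex-partition into components.

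For context on the gap between what is known and what you would need: the paper itself observes that the vertex-degree threshold for spanning $2$-spheres is at most $5/9$, via the $5/9$ threshold for tight Hamilton paths, \cref{lem:thin-path}, and a hypergraph bandwidth theorem; closing the gap from $5/9$ down to $4/9$ is the content of the conjecture and is expected to require ideas beyond a reduction to the codegree setting.
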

	
	Note that this conjecture implicitly asserts that the minimum (relative) vertex degree threshold for a spanning tight component is $4/9$, which is itself an interesting problem.
	On the other hand, it can be shown that the threshold for spanning spheres is at most $5/9$.
	This follows from a combination of the facts that $5/9$ is the threshold for tight Hamilton paths~\cite{RRR19}, certain blow-ups of tight paths containing spanning spheres (\cref{lem:thin-path}) and an upcoming hypergraph bandwidth theorem~\cite{LS24}.

	\subsection{Organisation of the paper}
	
	In \cref{sec:preliminaries} we collect notation and definitions which we will use throughout the paper.
	In the same section, we state our two key lemmata (\cref{lem:allocation,lem:findblowupchain}) and then prove our main result (\cref{thm:maintheorem}) assuming them.
	\cref{sec:tools} collects known tools, needed for our proofs.
	Finally, we prove \cref{lem:findblowupchain} in \cref{sec:blow-up} and \cref{lem:allocation} in \cref{sec:geometric_obs,sec:allocation}.
	
	\subsection*{Acknowledgements} We would like to thank an anonymous referee for a careful reading of the paper, and in particular for pointing us to an inaccuracy in the proof of Lemma~\ref{lem:findblowupchain} in an earlier version of this manuscript.

	\section{Preliminaries, key lemmata, and proof of the main result}
	\label{sec:preliminaries}
	
	\subsection{Notation and definitions}
	
	A \defn{$k$-uniform hypergraph} $H$ (or $k$-graph for short) consists of a set of \defn{vertices} $V(H)$ and a set of \defn{edges} $E(H)$, where each edge is a set of $k$ vertices.
	For a subset $S \subset V(H)$, we denote by  $\deg_H(S)$ the number of edges $e\in E(H)$ such that $S\subseteq e$.
	The \defn{minimum codegree} of $H$, denoted by $\delta(H)$, is the maximum integer $m$ such that every $(k - 1)$-set has degree at least $m$ in $H$.
	A set $S$ is \defn{supported} in $H$ if it has positive degree.
	We write \defn{$\partial H$} for the set of supported edges in $H$.
	
	A \defn{blow-up} of a $k$-graph $F$ is obtained by replacing each each vertex $x \in V(F)$ by a non-empty vertex set $V_x$ and each edge $e = x_1 \dotsc x_k \in E(F)$ by a complete $k$-partite $k$-graph on parts $V_{x_1}, V_{x_2}, \dotsc, V_{x_k}$. If $F^\ast$ is a blow-up of $F$, then there is a \defn{projection} map $\phi \colon V(F^\ast) \to V(F)$ defined by $\phi(v) = x$ for $v \in V_x$.
	Moreover, we write $\phi(U) = \set{\phi(v)\colon v \in U}$ for $U\subset V(F^\ast)$.

	We call $F^\ast$ a \defn{$(\gamma, m)$-regular blow-up} of $F$ if $F^\ast$ is a blow-up of $F$ where each part has size in the interval $[(1 - \gamma)m, (1 + \gamma)m]$.
	We abbreviate to \defn{$m$-regular blow-up} if $\gamma = 0$.
	Finally,~$F^\ast$ is called a \defn{$(\gamma,m)$-nearly-regular blow-up} if $F^\ast$ is a blow-up of $F$ where all but at most one part has size in the interval $[(1 - \gamma)m, (1 + \gamma)m]$, and the size of the (at most one) remaining exceptional part is exactly one. Note that every $(\gamma,m)$-regular blow-up is, in particular, a $(\gamma,m)$-nearly-regular blow-up.
	
	\subsection{Key lemmata}
	As anticipated in the introduction, our strategy is as follows: we would like to partition the graph into \emph{well-behaved} pieces, meaning that each piece can be covered with a sphere and that it is possible to glue all such spheres together into a spanning one.
	We now give more details and state the two key lemmata.
	
	Let $H$ be a $k$-graph with no isolated vertices and $\comin(H) \ge (1/2+o(1))n$.
	Methods from \cite{lang2023tiling} (see \cref{sec:property_graph}) can already cover all but at most $o(n)$ vertices of $H$ with a family $\cF$ of pairwise vertex-disjoint $k$-graphs with $\abs{\cF}=O(1)$ such that each $k$-graph $F^\ast \in \cF$ is the blow-up of a graph $F$ with no isolated vertices and $\comin(F) \ge (1/2+o(1))\abs{V(F)}$.
	However, this alone, is not sufficient for us.
	Indeed we want to cover the leftover as well and make sure that the parts are well-connected in a path-like fashion as described above.
	
	In order to achieve that, we modify the graphs in $\cF$ to get a sequence of $k$-graphs $F_1^\ast, \dotsc, F_{\ell}^\ast$ with $\ell=O(1)$ satisfying the following properties.
	Each $F_i^\ast$ is the nearly-regular blow-up of a $k$-graph $F_i$ with no isolated vertices and $\comin(F_i) \ge (1/2+o(1))\abs{V(F_i)}$; each vertex of $H$ is covered by at least one $F_i^\ast$; each $F_i^\ast$ can share vertices only with the $k$-graph coming before and the one coming after in the sequence.
	More precisely, $F_i^\ast$ and $F_{i+1}^\ast$ share exactly $k$ vertices which induce an edge in both $F_i^\ast$ and $F_{i+1}^\ast$.
	The formal statement of the first key lemma is as follows.
	\begin{lemma}[Blow-up chain]\label{lem:findblowupchain}
		Let $1/n\ll 1/m_2\ll 1/m_1 \ll 1/s, \gamma \ll \eps, 1/k\leq 1$, and let $H$ be an $n$-vertex $k$-graph with $\comin(H) \ge (1/2+\eps)n$ and no isolated vertices. Then, there exists a sequence of $s$-vertex $k$-graphs $F_1,\dotsc, F_\ell$ and a sequence of subgraphs $F_1^\ast,\dotsc, F_\ell^\ast\subseteq H$ such that the following properties hold for each $i \in [\ell]$ and $j \in [\ell-1]$:
		\begin{enumerate}[label = \textup{(}\arabic*\textup{)}]
			\item \label{blowup_1} $F_i$ has no isolated vertices and $\comin(F_i) \ge (1/2+\eps/2)\abs{V(F_i)}$,
			\item \label{blowup_2} there exists an $m_i^\ast\in [m_1,m_2]$ such that $F_i^\ast$ is a $(\gamma, m_i^\ast)$-nearly-regular blow-up of $F_i$,
			\item \label{blowup_3} $V(F_1^\ast) \cup \dots \cup V(F_\ell^\ast)=V(H)$,
			\item \label{blowup_4} $V(F_i^\ast)\cap V(F_j^\ast)=\emptyset$ if $\abs{i - j}\geq 2$, and
			\item \label{blowup_5} $V(F_j^\ast)\cap V(F_{j+1}^\ast)$ has size $k$ and induces an edge in $F_j^\ast$ and $F_{j+1}^\ast$ which is disjoint with the singleton parts of $F_j^\ast$ and $F_{j+1}^\ast$ \textup{(}if they exist\textup{)}.
		\end{enumerate}
	\end{lemma}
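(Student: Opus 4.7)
The plan is to follow the high-level strategy outlined in the paragraphs preceding the lemma: first cover almost all of $V(H)$ by pairwise vertex-disjoint blow-ups that inherit the codegree condition, and then glue the blow-ups into a chain through shared edges while absorbing the leftover vertices as singleton parts.

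For the first step I would invoke the tiling / property-graph framework of \cite{lang2023tiling} (recalled in \cref{sec:property_graph}). This produces a collection $G_1^\ast,\dots,G_t^\ast$ of $O(1)$ pairwise vertex-disjoint $(\gamma/2, m_i)$-regular blow-ups of $(s-2k)$-vertex $k$-graphs $G_i$ with no isolated vertices and $\comin(G_i) \geq (1/2 + 3\eps/4)\,\abs{V(G_i)}$, covering all but at most $\gamma n/100$ vertices of $H$. The deliberate slack in the codegree constant, the size of the leftover, and the part-size window is designed to absorb the $O(1)$-per-blow-up perturbations introduced in Step~2.

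For the second step, I first observe that the hypothesis $\comin(H) \geq (1/2+\eps)n$ forces $H$ to be tightly connected: every tight component containing a supported edge has size at least $\comin(H) + (k-1) > n/2$, so there is room for only one. After ordering the $G_i^\ast$, for each consecutive pair I would greedily locate an edge $e_j\in E(H)$ whose $k$ vertices split between $V(G_j^\ast)$, $V(G_{j+1}^\ast)$, and possibly the leftover $L = V(H)\setminus\bigcup_i V(G_i^\ast)$, in such a way that the vertices on each side project to pairwise distinct parts of the corresponding reduced graph. This greedy search is powered by the minimum supported codegree, which supplies many candidate extensions at every stage. I would then enlarge both $G_j$ and $G_{j+1}$ by adjoining the ``foreign'' vertices of $e_j$ as new vertices and inserting the projected edge, simultaneously placing each adjoined vertex in a singleton part of the corresponding blow-up. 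Using $\comin(H) \geq (1/2+\eps)n$ and that every part of the original blow-ups has size close to $m_i$, the new reduced graphs retain supported codegree at least $(1/2+2\eps/3)\abs{V(G_i)}$. Finally, each remaining vertex of $L$ is absorbed similarly: it is added as a singleton to some $G_i$ whose blow-up it has high codegree into, and a pigeonhole argument ensures each blow-up receives at most one such extra singleton.

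The main obstacle is the singleton-avoidance requirement in property~(5): the $k$ shared vertices between $F_j^\ast$ and $F_{j+1}^\ast$ must avoid every singleton part on either side. I would handle this by choosing the edges $e_j$ so that their vertices lie inside large (non-singleton) parts; since each blow-up contains only $O(1)$ singletons while the regular parts collectively cover essentially all of $V(G_i^\ast)$, this leaves ample flexibility. The remaining verification is routine: properties~(3) and~(4) are built into the construction, while~(1)--(2) follow from the fact that the surgery alters each reduced graph by $O(1)$ vertices, which is negligible against the slack retained in Step~1.
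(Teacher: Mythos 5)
Your proposal captures the correct first step (property-graph/tiling framework covering all but $o(n)$ vertices with blow-ups inheriting the codegree condition), but the second step has a genuine contradiction and misses the key structural idea of the paper.

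The central problem is how you handle the shared edges $e_j$ and the singletons. You propose to take an edge $e_j$ with vertices split between $V(G_j^\ast)$ and $V(G_{j+1}^\ast)$, and to ``adjoin the foreign vertices as singleton parts of the corresponding blow-up.'' But then the shared edge $e_j$ between $F_j^\ast$ and $F_{j+1}^\ast$ \emph{contains} vertices lying in singleton parts of $F_j^\ast$ (namely, the newly adjoined ones), which directly violates property~(5). Your proposed fix --- choosing $e_j$ so that its vertices lie in large parts --- does not resolve this, because the foreign vertices of $e_j$ still need to be incorporated into $G_j^\ast$ somehow, and the only mechanism you describe for doing so is as singletons. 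Moreover, each $G_j^\ast$ would potentially receive foreign vertices from $e_{j-1}$, from $e_j$, and from leftover $L$-absorption, giving it multiple singleton parts; but a $(\gamma,m)$-nearly-regular blow-up is allowed \emph{at most one} singleton. The $(s-2k)$-vertex slack you build in addresses the vertex count, not the one-singleton constraint.

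The paper avoids all of this by a structurally different move: rather than directly connecting consecutive covering blow-ups, it inserts entirely \emph{new} bridging blow-ups $C_j^\ast$ between $B_j^\ast$ and $B_{j+1}^\ast$. Each $C_j^\ast$ is chosen via \cref{lem:rooted-blow-ups} to be a full $m_1$-regular (hence singleton-free) blow-up of some graph in $\mathcal{P}(\eps/2,k)$, rooted on $2k$ vertices drawn from large, pre-selected parts $b^{j}_{\text{left}}, b^{j}_{\text{right}}$ of the neighbouring $B$'s. The shared $k$-edge between $C_j^\ast$ and $B_j^\ast$ (resp.\ $B_{j+1}^\ast$) then lands in large parts on both sides by construction, so property~(5) holds automatically. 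Similarly, the leftover vertices are absorbed not by adjoining them as extra singletons to existing blow-ups, but by constructing fresh nearly-regular blow-ups $L_j^\ast$ with the leftover vertex as the unique singleton (\cref{claim:saturatingL}); each such blow-up drains only $O(\gamma m_2)$ vertices from the parts of the original tiling, so the $R_i^\ast$'s remain near-regular. Without the bridging-blow-up idea, your construction cannot simultaneously satisfy the one-singleton constraint in property~(2) and the singleton-avoidance constraint in property~(5).
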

	
	Suppose we have a sequence $F_1^\ast, \dotsc, F_\ell^\ast$ as in the statement of \cref{lem:findblowupchain}.
	We would be done if we could cover each $F_i^\ast$ with a spanning sphere, while making sure that the edges induced by $V(F_{i-1}^\ast)\cap V(F_{i}^\ast)$ and $V(F_{i}^\ast)\cap V(F_{i+1}^\ast)$ are facets.
	In fact, then we would simply glue these spheres along the common facet and get a spanning sphere of $H$.
	This will be done with our second key lemma.
	
	\begin{lemma}[Allocation]\label{lem:allocation}
		Let $1/m\ll 1/s\ll \gamma \ll \eps, 1/k\leq 1/3$, and let $R$ be an $s$-vertex $k$-graph without isolated vertices and with $\comin(R) \ge (1/2 + \eps)s$. Let $R^\ast$ be a $(\gamma, m)$-nearly-regular blow-up of $R$ and let $f_1, f_2 \in E(R^\ast)$ such that $\phi(f_1)$, $\phi(f_2)$ and the vertex in $R$ corresponding to the singleton part of $R^\ast$ \textup{(}if it exists\textup{)} are all disjoint. Then, $R^\ast$ contains a spanning copy of $\bS^{k - 1}$ where $f_1$ and $f_2$ are facets.
	\end{lemma}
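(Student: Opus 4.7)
The plan is to build the spanning sphere in two layers: a sparse \emph{skeleton} sphere $S'\subseteq R^\ast$ that traverses every edge of $R$ and contains $f_1,f_2$ as facets, and, for each $e\in E(R)$, a large \emph{ball-like} sphere $B_e$ glued onto $S'$; together these will consume every vertex of $R^\ast$.

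First note that $\comin(R)\ge(1/2+\eps)s$ together with no isolated vertices forces $R$ to be tightly connected. I would build $S'$ by following a closed tight walk in $R$ that visits every edge and starts and ends at $\phi(f_1)$ and $\phi(f_2)$, gluing together small ``transition'' spheres one above each traversed edge $e$, arranged so that each contributes a designated $(k-1)$-facet $F_e$ with $\phi(F_e)=e$, and with $F_{\phi(f_1)}=f_1$ and $F_{\phi(f_2)}=f_2$. Since $s\ll m$, one can ensure that $S'$ uses only an $o(m)$ fraction of every part $V_x$, and that the singleton part of $R^\ast$ (if it exists) contributes at most its one vertex to $S'$.

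Now comes the allocation step, where the real work lies. For each edge $e=\{x_1,\dots,x_k\}\in E(R)$ and each $i\in[k]$, choose integers $a_{e,x_i}$ counting the vertices of $V_{x_i}$ that $B_e$ will use. These must satisfy
\[
\sum_{e\ni x} a_{e,x}=\bigl|V_x\setminus V(S')\bigr|\qquad\text{for every }x\in V(R),
\]
and in addition, for each $e$, the tuple $(a_{e,x_1},\dots,a_{e,x_k})$ must be realisable as the partition vector of some $(k-1)$-sphere in the complete $k$-partite $k$-graph on parts $V_{x_1},\dots,V_{x_k}$ having $F_e$ as a facet. The realisability of a single $B_e$ with prescribed (sufficiently large and sufficiently balanced) part sizes should be the subject of \cref{sec:geometric_obs}: starting from the boundary of a $k$-simplex lying above $e$ with $F_e$ as a facet, local bistellar-type moves let us grow the sphere by one vertex in any chosen part, giving flexibility over all admissible partition vectors. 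The global balance constraint can then be encoded as a weighted fractional problem on the auxiliary graph $(V(R),E(R))$, and the Dirac-type threshold $\comin(R)\ge(1/2+\eps)s$ is exactly what is needed to invoke a Hall/Dirac-style matching argument to produce the integers $a_{e,x}$.

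Finally, build each $B_e$ with boundary $F_e$ and the prescribed partition sizes, and glue it to $S'$ by identifying boundaries and deleting $F_e$ from $S'$. Topologically each gluing replaces one facet of a sphere by a disk with the same boundary, so the union remains homeomorphic to $\bS^{k-1}$; the resulting complex spans $V(R^\ast)$ and has $f_1,f_2$ as facets. The main obstacle is the allocation step: both the matching argument (which fixes the row-sums $\sum_{e\ni x}a_{e,x}$) and the geometric construction of $B_e$ (which requires the $a_{e,x_i}$ within each edge to be large and comparable) put constraints on the $a_{e,x}$, and reconciling these simultaneously is where the $(1/2+\eps)$-threshold must be used in a tight, delicate way.
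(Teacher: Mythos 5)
Your overall architecture (a sparse skeleton sphere traversing all edges of $R$ with designated entry facets, plus large fillers $B_e$ glued on, plus a matching argument to size the fillers) is the right one and matches the paper's decomposition. However, there is a genuine gap in the allocation step, and it is not a gap that can be repaired by the argument you sketch.

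You claim that ``local bistellar-type moves let us grow the sphere by one vertex in any chosen part, giving flexibility over all admissible partition vectors,'' i.e.\ that $K_k^{(k)}(a_1,\dots,a_k)$ has a spanning $(k-1)$-sphere with a prescribed facet whenever the $a_i$ are large and comparable. This is false already for $k=2$: a spanning $\bS^1$ in the complete bipartite graph $K_{a_1,a_2}$ is a Hamilton cycle, which exists if and only if $a_1=a_2$ exactly, not merely when $a_1,a_2$ are large and comparable. The geometric lemmas of the paper (\cref{lem:partitesphere}) deliberately establish spanning spheres only for the very constrained shapes $K_k^{(k)}(2,\dots,2,\ell,\ell)$ and $K_k^{(k)}(2,\dots,2,3,\ell,\ell)$, and the allocation is engineered so that only these shapes arise. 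This is done in two stages: first, a Hall-type matching in the bipartite incidence graph between supported pairs $p\in\partial_2 R$ and edges $E(R)$ assigns to each $p=ab$ a distinct edge $e_p\supseteq p$ (possible because each pair lies in $\ge(1/2+\eps)s \gg \binom{k}{2}$ edges); second, after reserving a $2k$-vertex base $A_e\supseteq f_e$ for each $e$ (with a small parity fix on one $A_{e^\ast}$), one finds a \emph{perfect matching} $M$ in the $2$-uniform graph $\partial_2(R^\ast-\bigcup_e A_e)$ via Dirac's theorem (the codegree hypothesis is exactly what makes this graph Dirac), and the matching edges lying between $\phi^{-1}(a)$ and $\phi^{-1}(b)$ are all handed to $B_{e_{ab}}$. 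In this way extra vertices only ever enter $B_e$ in pairs confined to two of its $k$ parts, so $R^\ast[B_e]$ is always a shape that \cref{lem:partitesphere} covers. Your formulation, which allows $a_{e,x}$ to vary freely across all $k$ parts subject only to row sums, has no mechanism forcing this rigid shape, and hence needs the unproved (and in general false) realisability claim. You also need a separate small argument to absorb the singleton part into a constant-size sphere before running the main construction; the paper handles this explicitly by peeling off a $K_k^{(k)}(2,\dots,2)$ through the singleton and exposing a fresh facet $f_3$.
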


	\subsection{Proof of main result}
	
	Assuming \cref{lem:findblowupchain,lem:allocation}, we can easily prove our main result.
	
	\begin{proof}[Proof of \cref{thm:maintheorem}]
		Fix constants satisfying $1/n\ll 1/m_2\ll 1/m_1 \ll 1/s \ll \gamma \ll \eps, 1/k\leq 1$ and apply \cref{lem:findblowupchain} to find $F_1,\dotsc, F_\ell$ and $F_1^\ast,\dotsc, F_\ell^\ast\subseteq H$ with properties as listed in the statement of \cref{lem:findblowupchain}. For each $i\in [2,\ell-1]$, apply \cref{lem:allocation} to $F_i^\ast$ (with $\eps/2$ playing the role of $\eps$) with $f_1, f_2$ being the edge induced by $V(F_{i-1}^\ast)\cap V(F_{i}^\ast), V(F_{i}^\ast)\cap V(F_{i+1}^\ast)$, respectively (note that $f_1$ and $f_2$ are disjoint by property~\ref{blowup_4}). Apply \cref{lem:allocation} to $F_1^\ast$ with $f_2$ being the edge induced by $V(F_{1}^\ast)\cap V(F_{2}^\ast)$, and~$f_1$ set to be an edge disjoint with $f_2$ and the singleton part, if it exists (here, the choice does not matter). Similarly, apply \cref{lem:allocation} to $F_\ell^\ast$ with $f_1$ being the edge induced by $V(F_{\ell-1}^\ast)\cap V(F_{\ell}^\ast)$, and $f_2$ chosen arbitrarily to be disjoint with $f_1$ and the singleton part, if it exists.
		
		By gluing together the spanning simplicial $(k - 1)$-spheres produced by each of these applications of \cref{lem:allocation} (using \cref{rmk:gluecommonface}), we get a spanning simplicial $(k - 1)$-sphere of~$H$, as desired.
	\end{proof}
	
	\subsection{Lower bound constructions}\label{sec:constructions}
	
	The following construction due to Georgakopoulos et al.~\cite{georgakopoulos2022spanning} shows that the degree condition in \cref{conj:mainconj} is tight for $k=3$.
	Let $H$ be an $n$-vertex $3$-graph where $V(H) = \set{u,v}\cup X \cup Y$ where $\abs{X} = \abs{Y} = (n-2)/2$ and $E(H)$ consists of all possible $3$-edges with the exception of those meeting both $X$ and $Y$. It is easy to check that $\comin(H) = n/2-O(1)$ and, furthermore, $H$ does not contain a spanning copy of a $2$-sphere. Indeed, if it did, removing every edge that contains both $u$ and $v$ from this copy splits the copy of the sphere into two tight components, which gives a contradiction using elementary topological arguments.
	
	This construction naturally generalises to $k$-graphs by replacing $\set{u,v}$ with a set of $k - 1$ vertices.
	{Indeed, let $T = \set{u_1, \dotsc, u_{k - 1}}$ be the set of $k - 1$ vertices that replaces $\set{u, v}$ and suppose there is a spanning $(k - 1)$-sphere $S$ in $H$. Note that at most two edges of $S$ contain~$T$. These edges form a $(k - 1)$-ball with no vertices in its interior. Since a $(k - 1)$-sphere is $(k - 2)$-connected in the topological sense, the removal of these edges does not disconnect $S$. However, removing all edges that contain $T$ from $H$ splits $H$ into two tight components.}
	
	\section{Tools}\label{sec:tools}
	
	\subsection{Connectivity}
	
	A $k$-uniform \defn{tight walk} $W$ in a $k$-graph $G$ comes with an ordered multiset of vertices such that the edges of $W$ are precisely the $k$-sets of consecutive vertices. A \defn{tight path} is a tight walk that does not repeat vertices.
	The following result~\cite[Prop.~5.1]{LS23} ties tight connectivity (recall this definition from the introduction) and tight walks. We include the proof for completeness.
	
	\begin{lemma}\label{lem:tight-connectivity-walk}
		Let $H$ be a $k$-graph. Then $H$ is tightly connected if and only if $H$ has no isolated vertices and contains a tight walk that contains every edge of $H$ as a subwalk.
	\end{lemma}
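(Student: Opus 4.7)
The ``if'' direction is immediate: a tight walk in $H$ containing every edge as a subwalk gives, through its consecutive $k$-windows, a walk in $\hat H$ that visits every vertex, so $\hat H$ is connected on $E(H)$; together with the no-isolated-vertex hypothesis this forces $H$ to be tightly connected.

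For the converse, the plan is to proceed in two steps. First, since $\hat H$ is connected on $E(H)$, a DFS traversal of $\hat H$ produces a sequence of edges $e_1, e_2, \dots, e_N$ of $H$ (with repetitions allowed) such that $\abs{e_i \cap e_{i+1}} = k-1$ for every $i$ and every edge of $H$ appears at least once. The problem then reduces to realising this sequence as the ordered list of $k$-windows of a single tight walk in $H$; such a walk automatically contains every edge of $H$ as a subwalk.

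This realisation I would carry out by induction on $N$. The base case $N=1$ is just an arbitrary ordering of $e_1$. For the inductive step, suppose one already has a tight walk $W = u_1 u_2 \dots u_M$ realising $e_1, \dots, e_{N-1}$ in order, with final window $(u_{M-k+1}, \dots, u_M)$ some ordering of $e_{N-1}$. Write $v$ for the unique vertex of $e_{N-1} \setminus e_N$ and $w$ for the unique vertex of $e_N \setminus e_{N-1}$. If $u_{M-k+1} = v$, appending $w$ to $W$ turns the final window into $(u_{M-k+2}, \dots, u_M, w) = e_N$ and the extension is complete. Otherwise I would \emph{rotate}: append $u_{M-k+1}$ to $W$, so that the new final window becomes the cyclic shift $(u_{M-k+2}, \dots, u_M, u_{M-k+1})$ of the previous one, which is still an ordering of $e_{N-1}$ and so still an edge of $H$. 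Each rotation moves the first coordinate of the final window forward by one cyclically within $e_{N-1}$, so at most $k-1$ rotations bring $v$ into the first position, at which point one more append of $w$ produces the final window $e_N$.

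The main point requiring care is verifying that the rotations genuinely preserve tightness and do not create a forbidden vertex repeat inside any $k$-window: since the rotated window is a cyclic permutation of the previous one, it consists of the same $k$ distinct vertices and is an edge of $H$, so the extended walk remains tight at every intermediate step. I do not foresee further obstacles; the argument works uniformly for every $k \geq 2$ and produces a tight walk in $H$ containing every edge of $H$ as a subwalk, completing the proof.
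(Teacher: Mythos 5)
Your proof is correct and relies on the same core idea as the paper's: a cyclic rotation of an edge's ordering is still an ordering of that edge, so a tight walk can always be "rotated" into a position where the vertex to be replaced sits at the front of the current window, after which the new vertex is appended. The organizational difference is minor — you extract a closed walk in $\hat H$ up front via DFS and realise it by appending at the end, whereas the paper iterates over $\hat H$-neighbours of already-covered edges and splices a self-returning detour $W_2 = (x_1, \dotsc, x_k, x_1, \dotsc, x_{j-1}, y, x_{j+1}, \dotsc, x_k, x_1, \dotsc, x_k)$ into the middle of the walk wherever $e$ occurs — but the substance is the same.
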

	
	\begin{proof}
		Consecutive edges in tight walks have at least $k - 1$ vertices in common and so tight walks are tightly connected. This proves the `if' direction. Now suppose that $H$ is tightly connected. We will show that, for every $ef \in E(\hat{H})$ and every tight walk $W'$ in $H$ that contains $e$, there is a tight walk $W$ in $H$ that covers all the edges of $W'$ and in addition covers~$f$. Since $\hat{H}$ is connected, repeatedly applying this result will give a tight walk in $H$ that contains every edge of $H$.
		
		Since $W'$ contains $e$, there is a subwalk $W_1 = (x_1, \dotsc, x_k)$ of $W'$ such that $e = \set{x_1, \dotsc, x_k}$. By assumption $\abs{e \cap f} = k - 1$ and so $e \setminus f = \set{x_j}$ for some $1 \leq j \leq k$ and $f \setminus e = \set{y}$ for some vertex $y$. It follows that
		\begin{equation*}
			W_2 = (x_1, \dotsc, x_k, x_1, \dotsc, x_{j - 1}, y, x_{j + 1}, \dotsc, x_k, x_1, \dotsc, x_k)
		\end{equation*}
		is a tight walk that starts and ends at $e$ and visits $f$. Replacing $W_1$ by $W_2$ in $W'$ gives the required tight walk $W$.
	\end{proof}
	
	The \defn{order} of a tight walk $W$ is the number of vertices in the multiset. We now bound the order of the tight walk given by the previous lemma.
	
	\begin{lemma}\label{lem:tight-connectivity-bounded-tight-walk}
		Let $H$ be an $n$-vertex tightly connected $k$-graph.
		Then $H$ contains a tight walk of order at most $n^{2k}$ that contains each edge of $H$ as a subwalk.
	\end{lemma}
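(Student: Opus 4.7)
The plan is to iterate the substitution argument from the proof of \cref{lem:tight-connectivity-walk} in a controlled way, using a spanning tree of $\hat{H}$ to minimise the number of extensions and to bound the growth of the walk at each step.

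Concretely, fix any $e_0 \in E(H)$ and, using the tight connectivity of $H$, a spanning tree $T$ of $\hat{H}$. Enumerate $E(H) = \{e_0, e_1, \dotsc, e_M\}$ with $M = \abs{E(H)} - 1$ so that each $e_j$ with $j \geq 1$ is adjacent in $T$ to some previously listed edge $e_{\sigma(j)}$. Initialise $W^{(0)} = (x_1, \dotsc, x_k)$ with $e_0 = \{x_1, \dotsc, x_k\}$, and build $W^{(1)}, \dotsc, W^{(M)}$ inductively: having already constructed a tight walk $W^{(j-1)}$ containing $e_0, \dotsc, e_{j-1}$ as subwalks, locate a subwalk $W_1 = (y_1, \dotsc, y_k)$ of $W^{(j-1)}$ realising $e_{\sigma(j)}$ and, exactly as in the proof of \cref{lem:tight-connectivity-walk}, replace it by the length-$3k$ walk $W_2 = (y_1, \dotsc, y_k, y_1, \dotsc, y_{a-1}, z, y_{a+1}, \dotsc, y_k, y_1, \dotsc, y_k)$, where $\{y_a\} = e_{\sigma(j)} \setminus e_j$ and $\{z\} = e_j \setminus e_{\sigma(j)}$.

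The key observation is that $W_2$ begins and ends with a copy of $W_1$, so each $k$-consecutive subwalk of $W^{(j-1)}$ is either disjoint from the replaced block (and hence preserved verbatim) or overlaps it only in the prefix or suffix copy of $W_1$ inside $W_2$ (and hence appears at a shifted but identical position). Thus $W^{(j)}$ still contains $e_0, \dotsc, e_{j-1}$ as subwalks and now additionally contains $e_j$. Each substitution increases the order by exactly $2k$, so the final walk $W = W^{(M)}$ has order
\[
k + 2k M \;\leq\; k + 2k \binom{n}{k} \;\leq\; 3 n^k \;\leq\; n^{2k},
\]
valid for all $n \geq k \geq 2$, which is the required bound.

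The only nontrivial step is the preservation argument for the previously-embedded edges under each substitution; once one notes that $W_2$ starts and ends with $W_1$, this is automatic and no further bookkeeping is needed. The bound $n^{2k}$ is in fact very generous — the proof above delivers $O(n^k)$ — so there is plenty of slack.
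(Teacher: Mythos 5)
Your proof is correct, and it takes a genuinely different route from the paper's. The paper starts from the qualitative statement of \cref{lem:tight-connectivity-walk}, takes a tight walk $W$ of \emph{minimal} order visiting all edges, decomposes it as $W = W_1 \dotsm W_r$ according to first appearances of edges, and argues that minimality forces each ordered $k$-tuple to occur at most once within each $W_i$; the bound $n^{2k}$ then follows from $r \leq \binom{n}{k}$ and at most $n^k$ tuples per block. You instead make the iterative extension of \cref{lem:tight-connectivity-walk} \emph{quantitative}: order $E(H)$ along a spanning tree of $\hat H$ so that each new edge is tightly adjacent to an already-embedded one, and note that the length-$3k$ patch $W_2$ begins and ends with $W_1$, so writing $W^{(j-1)} = P\,W_1\,Q$, the new walk $P\,W_2\,Q$ contains both $P\,W_1$ and $W_1\,Q$ as contiguous blocks, and every $k$-window of $W^{(j-1)}$ lies inside one of those two blocks. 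Each step therefore costs exactly $2k$ letters, and the final order is $k + 2k(\abs{E(H)}-1) = O(n^k)$. Your argument is more constructive, avoids the extremal/minimality step entirely, and in fact proves a polynomially stronger bound ($O(n^k)$ rather than $n^{2k}$), which of course also suffices for the stated lemma.
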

	
	\begin{proof}
		By \cref{lem:tight-connectivity-walk}, there is a tight walk $W$ that visits all edges of $H$.
		Suppose that $W$ has minimal order with this property.
		Let $e_1, \dotsc, e_r$ be an enumeration of the edges of $H$ according to the first time that they are visited in $W$.
		Denote by $W_i$ the subwalk of $W$ that starts with the first appearance of $e_i$ and ends just before the first appearance of $e_{i + 1}$.
		So we can write $W$ as the concatenation of $W_1 \dotsc W_r$.
		Note that within every $W_i$ each tuple~$f$ of $k$ vertices may only appear once.
		Otherwise, some subwalk of $W_i$ starting with $f$ and ending just before another occurrence of $f$ could be removed from $W$, while retaining the property that every edge is visited.
		Since there are at most $\binom{n}{k}$ edges and at most $n^k$ tuples of $k$ vertices, the claim follows.
	\end{proof}

	\begin{lemma}[Tightly connected]\label{lem:dirac-to-tightly-connected}
		Every $n$-vertex $k$-graph with $\comin(H) \geq \floor{(n - k + 1)/2}$ and no isolated vertices is tightly connected.
	\end{lemma}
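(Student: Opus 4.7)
The plan is to argue by contradiction. Assume that $H$ has at least two distinct tight components, and choose edges $e_1$ and $e_2$ in different tight components $C_1\neq C_2$ maximising $\ell := \abs{e_1\cap e_2}$. Because two edges sharing $k-1$ vertices are adjacent in $\hat{H}$ and therefore lie in the same tight component, we must have $\ell\le k-2$; in particular both $e_1\setminus e_2$ and $e_2\setminus e_1$ are non-empty, and the goal is to contradict the maximality of $\ell$ by producing a cross-component pair of edges with larger overlap.

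The central step is to use the supported codegree hypothesis to produce many extensions of $e_1$ into $V\setminus(e_1\cup e_2)$, and dually of $e_2$. Fix $u\in e_1\setminus e_2$ and set $S = e_1\setminus\{u\}$. Since $S$ is supported (by $e_1$), one has $\deg_H(S)\ge m := \lfloor(n-k+1)/2\rfloor$, and as $S\in\partial C_1$ every edge of $H$ through $S$ lies in $C_1$. For any edge $S\cup\{w\}\in E(H)$ with $w\neq u$ the vertex $w$ is not in $e_1$, and the case $w\in e_2\setminus e_1$ is excluded by the maximality of $\ell$ (else $\abs{(S\cup\{w\})\cap e_2}=\ell+1$). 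Hence at least $m-1$ vertices $w$ of the outside set $D := V\setminus(e_1\cup e_2)$ satisfy $S\cup\{w\}\in E(C_1)$. The symmetric argument with $u'\in e_2\setminus e_1$ and $T = e_2\setminus\{u'\}$ yields at least $m-1$ vertices $w\in D$ with $T\cup\{w\}\in E(C_2)$.

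The proof then closes by a pigeonhole on $D$: since $\abs{D} = n - 2k + \ell$ and each of the two extension sets has size at least $m-1$, the bound $2(m-1)>\abs{D}$ (which follows from $m\ge\lfloor(n-k+1)/2\rfloor$ together with $\ell\le k-2$, outside a parity boundary) forces a common $w$. The edges $f_1 := S\cup\{w\}\in C_1$ and $f_2 := T\cup\{w\}\in C_2$ then satisfy $f_1\cap f_2\supseteq(e_1\cap e_2)\cup\{w\}$, hence $\abs{f_1\cap f_2}\ge\ell+1$, contradicting the choice of $\ell$. I expect the main obstacle to be the tight configuration $\ell = k-2$ with $n-k$ even and $m$ meeting its lower bound exactly, where the final inequality collapses to $2(m-1)=\abs{D}$ and the two extension sets could in principle partition $D$. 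To close this boundary case I would aggregate the central step over all $k-\ell$ choices of $u\in e_1\setminus e_2$ (and dually over $u'\in e_2\setminus e_1$) and apply the codegree hypothesis a second time at $(k-1)$-sets of the form $(e_1\cap e_2)\cup\{w\}$, which become supported in $H$ as soon as $w$ is hit from either side; using that every supported $(k-1)$-set lies in a unique tight component, an averaging argument over the $k-\ell$ extension layers should force the required cross-component coincidence even in the tight case.
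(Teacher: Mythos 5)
Your argument is essentially the same as the paper's: where the paper iterates the step \emph{``given any two edges $e,f$, find $e',f'$ with $\abs{e\cap e'},\abs{f\cap f'}\ge k-1$ and $\abs{e'\cap f'}>\abs{e\cap f}$''}, you package the same step as a maximality argument over cross-component pairs. The core pigeonhole is identical (restrict the extensions of $S=e_1\setminus\set{u}$ and $T=e_2\setminus\set{u'}$ to $D=V\setminus(e_1\cup e_2)$ and compare $2(m-1)$ against $\abs{D}$). The boundary you correctly flag at $\ell=k-2$ with $n-k$ even is a genuine issue, and the paper's own write-up has the same slip: the sentence ``we may assume $\Gamma(S),\Gamma(T)\subseteq V(H)\setminus(e\cup f)$'' is not true, since $v_k\in\Gamma(S)\cap e$ always; with the honest containment $\Gamma(S)\subseteq\bigl(V(H)\setminus(e\cup f)\bigr)\cup\set{v_k}$ the count degenerates to equality precisely when $\ell=k-2$ and $n-k$ is even. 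Your proposed repair for this boundary cannot work as stated, because the lemma is actually false there: for $k=2$, $n=6$, two disjoint triangles have $\comin(H)=2=\floor{(n-k+1)/2}$, no isolated vertices, and are disconnected. (The paper is unaffected downstream, since it only invokes the lemma with codegree $(1/2+\Omega(1))s$, which leaves ample slack; a correct free-standing statement would replace $\floor{(n-k+1)/2}$ by $\ceil{(n-k+1)/2}$, or equivalently ask for $\comin(H)>(n-k)/2$.) Outside this shared boundary, your main argument is correct and matches the paper.
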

	
	\begin{proof}
		We show that if $e$ and $f$ are distinct edges of $H$, then there are edges $e'$ and $f'$ such that $\abs{e \cap e'}, \abs{f \cap f'} \geq k - 1$ and $\abs{e' \cap f'} > \abs{e \cap f}$. Iteratively applying this shows that $e$ and $f$ are in the same tight component of $\hat{H}$, and so $H$ is tightly connected.
		
		Write $e = u_1 \dotsc u_\ell v_{\ell + 1} \dotsc v_k$ and $f = u_1 \dotsc u_\ell w_{\ell + 1} \dotsc w_k$ where $\ell = \abs{e \cap f}$. Let $S = \set{u_1, \dotsc, u_\ell, v_{\ell + 1}, \dotsc, v_{k - 1}}$ and $T = \set{u_1, \dotsc, u_\ell, w_{\ell + 1}, \dotsc, w_{k - 1}}$. Note that $S, T \in \partial E(H)$.
		
		Let $\Gamma(S) = \set{x \in V(H) \colon S \cup \set{x} \in E(H)}$ and $\Gamma(T) = \set{x \in V(H) \colon T \cup \set{x} \in E(H)}$. If $\Gamma(S) \cap f \neq \emptyset$, then some $w_i \in \Gamma(S)$ and so $e' = S \cup \set{w_i}$ and $f' = f$ satisfy the conditions claimed. Similarly if $\Gamma(T) \cap e \neq \emptyset$. Thus we may assume that $\Gamma(S), \Gamma(T) \subseteq V(H) \setminus (e \cup f)$.
		
		Now $\abs{V(H) \setminus (e \cup f)} \leq n - (k + 1)$ and $\abs{\Gamma(S)} + \abs{\Gamma(T)} \geq 2\comin(H) \geq 2\floor{(n - k + 1)/2} > n - (k + 1)$, and so there is some vertex $x \in \Gamma(S) \cap \Gamma(T)$. Then $e' = u_1 \dotsc u_\ell v_{\ell + 1} \dotsc v_{k - 1} x$ and $f' = u_1 \dotsc u_\ell w_{\ell + 1} \dotsc w_{k - 1} x$ are edges satisfying the conditions claimed.
	\end{proof}
	
	\subsection{Minimum supported \texorpdfstring{$d$}{d}-degree}
	
	For $1 \leq d < k$ and a $k$-graph $H$, the \defn{minimum $d$-degree} of $H$, denoted by $\delta_d(H)$, is the maximum integer $m$ such that every set of $d$ vertices has degree at least $m$ in $H$.
	We write $\partial_d H$ for the set of supported $d$-sets in $H$.
	For non-empty~$H$, the \defn{minimum supported $d$-degree}, denoted by $\comin_d(H)$, is the maximum integer $m$ such that every supported $d$-set has degree at least $m$ in $H$.
	If $H$ is empty, we set $\comin_d(H)=0$.
	We remark that $\delta_{k - 1}(H) = \delta(H)$ and $\comin_{k - 1}(H)=\comin(H)$.
	
	Analogously to the usual minimum $d$-degree, the notion of minimum supported $d$-degree is stronger for larger $d$.
	This is formalised in the following fact.
	
	\begin{fact}\label{claim:supported_minimum_degree}
		Let $H$ be a $k$-graph and $d$ be an integer with $1 \le d < k - 1$.
		Then $\comin_d(H) \ge \frac{\comin(H)}{k-d} \cdot \comin_{d+1}(H)$.
	\end{fact}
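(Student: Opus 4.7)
Let $S$ be an arbitrary supported $d$-set in $H$; it suffices to show $\deg_H(S) \geq \comin(H)\cdot\comin_{d+1}(H)/(k-d)$. The plan is a simple double counting of the pairs $(v,e)$ with $v \in V(H)\setminus S$ and $e \in E(H)$ satisfying $S\cup\{v\} \subset e$. Counting by edges, each edge $e \supset S$ contributes $\abs{e\setminus S}=k-d$ such pairs, while counting by $v$ gives
\begin{equation*}
    (k-d)\deg_H(S) \;=\; \sum_{v\in V(H)\setminus S} \deg_H(S\cup\{v\}).
\end{equation*}
Only those $v$ for which $S\cup\{v\}$ is supported contribute; for each such $v$, the definition of $\comin_{d+1}(H)$ gives $\deg_H(S\cup\{v\})\geq \comin_{d+1}(H)$. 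Thus
\begin{equation*}
    (k-d)\deg_H(S) \;\geq\; N\cdot \comin_{d+1}(H),
\end{equation*}
where $N$ denotes the number of vertices $v \in V(H)\setminus S$ with $S\cup\{v\} \in \partial_{d+1}H$.

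The one remaining step is to lower bound $N$ by $\comin(H)$. Since $S$ is supported, we may fix any edge $e_0 \in E(H)$ with $S \subset e_0$ and, inside $e_0$, pick any $(k-1)$-set $T$ with $S \subset T$; such a $T$ exists because $d < k-1$, and it is supported by $e_0$, so $\deg_H(T) \geq \comin(H)$. Every extension of $T$ to an edge contributes a vertex $w \notin T \supset S$ for which $S \cup \{w\}$ is supported (by the edge $T\cup\{w\}$), and these $w$'s are pairwise distinct. Hence $N \geq \deg_H(T) \geq \comin(H)$, and combining with the inequality above yields the claim.

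There is no real obstacle here: the argument is a one-line double count, with the only subtlety being the verification that the neighbourhood of a suitably chosen supported $(k-1)$-superset $T$ of $S$ provides enough supported $(d+1)$-extensions of $S$, which is immediate from $S \subset T \subset e_0$.
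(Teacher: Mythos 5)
Your proof is correct and follows essentially the same double-counting argument as the paper, counting pairs $(v,e)$ with $S \subset S\cup\{v\} \subset e$ in two ways. The only difference is that you spell out the justification for $N \geq \comin(H)$ (via a supported $(k-1)$-superset $T$ of $S$ inside a witnessing edge), a step the paper states more tersely.
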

	
	\begin{proof}
		The result follows from a double counting argument.
		Let $S$ be a supported $d$-subset of $V(H)$ and define $\cF = \set{(e,v) \colon e \in E(H), e \supset S \text{ and } v \in e \setminus S}$.
		
		There are $\deg_H(S)$ ways to choose $e \in E(H)$ with $e \supset S$ and, having fixed any such $e$, there are $k - \abs{S}$ ways to choose $v \in e \setminus S$.
		Therefore $\abs{\cF} = \deg_H(S) \cdot (k-d)$.
		
		Since $S$ is supported, there are at least $\comin(H)$ distinct vertices $v \in V(H) \setminus S$ such that $S \cup \set{v}$ is supported.
		For any such $v$, there are at least $\comin_{d+1}(H)$ choices of $e \in E(H)$ such that $e \supseteq S \cup \set{v}$.
		Therefore $\abs{\cF} \ge \comin(H) \cdot \comin_{d+1}(H)$.
		
		In particular, $\deg_H(S) \ge \frac{\comin(H)}{k-d} \cdot \comin_{d+1}(H)$.
		Since this is true for any supported $d$-set $S$, the result follows.
	\end{proof}
	
	\subsection{Concentration}
	
	We use the following standard concentration bound.
	\begin{lemma}[{\cite[Cor.~2.2]{concentration}}]\label{lem:concentration}
		Let $V$ be an $n$-set with a function $h$ from the $s$-sets of $V$ to $\bR$.
		Suppose that there exists $K \geq 0$ such that $\abs{h(S)-h(S')} \le K$ for any $s$-sets $S, S' \subset V$ with $\abs{S \cap S'} = s-1$.
		Let $S \subset V$ be an $s$-set chosen uniformly at random.
		Then, for any $\ell >0$,
		\begin{equation*}
			\Prob(\abs{h(S) - \Exp [h(S)]} \geq \ell) \leq 2 \exp\biggl(-\frac{2 \ell^2}{\min\set{s, n-s} K^2}\biggr).
		\end{equation*}
	\end{lemma}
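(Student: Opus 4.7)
The plan is a standard bounded-difference argument, adapted to sampling without replacement via a Doob martingale. I would expose the elements of $S$ one at a time in a uniformly random order, letting $X_1, \dotsc, X_s$ denote the resulting sequence and $\cF_i = \sigma(X_1, \dotsc, X_i)$ the natural filtration. Setting $Y_i = \Exp[h(S) \mid \cF_i]$ gives a martingale with $Y_0 = \Exp[h(S)]$ and $Y_s = h(S)$, so the goal reduces to bounding its increments and applying Azuma--Hoeffding.

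The key step is to show $\abs{Y_i - Y_{i-1}} \le K$. Condition on $X_1, \dotsc, X_{i-1}$; then for any two eligible values $u, u'$ for $X_i$, one couples the two resulting conditional distributions of $S$ so that the samples differ in at most one element. Such a coupling is obtained by drawing the remaining $s - i$ positions uniformly from $V \setminus \set{X_1, \dotsc, X_{i-1}, u}$ to form $S$, and then building $S'$ by replacing $u$ with $u'$ and swapping any occurrence of $u'$ in the tail back to $u$. The two resulting $s$-sets satisfy $\abs{S \cap S'} \ge s - 1$, hence $\abs{h(S) - h(S')} \le K$ by hypothesis, and this transfers to the conditional expectations via Jensen. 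Azuma--Hoeffding applied to $(Y_i)_{i=0}^s$ then yields
\[
\Prob\bigl(\abs{h(S) - \Exp[h(S)]} \ge \ell\bigr) \le 2\exp\bigl(-2\ell^2 / (sK^2)\bigr).
\]

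To replace $s$ by $\min\set{s, n-s}$ in the denominator, I would apply the same argument to the complementary function $\tilde h(T) = h(V \setminus T)$ defined on $(n-s)$-sets. Since $\tilde h(V \setminus S) = h(S)$ and the bounded-difference hypothesis is preserved (with the same constant $K$) under complementation, running the martingale with $n - s$ steps yields the analogous bound with $(n-s) K^2$ in place of $sK^2$, and taking the stronger of the two inequalities gives the stated estimate. The main subtlety lies in the swap-coupling verification of the increment bound, since the two conditional distributions live on distinct ground sets; once this is handled cleanly, the remaining steps are a direct invocation of Azuma--Hoeffding.
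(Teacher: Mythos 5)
Your strategy -- exposing the elements of $S$ sequentially and running the Doob martingale $Y_i = \Exp[h(S) \mid \cF_i]$, bounding the increments via a swap coupling, applying a Hoeffding-type martingale inequality, and complementing to replace $s$ by $n-s$ -- is the standard proof of this result (the paper cites the reference without giving its own proof). The coupling is set up correctly: the map from tails $T$ to $T'$ is a bijection, so $S'$ has the right conditional law; the resulting pair satisfies $\abs{S \cap S'} \ge s-1$; and the complementation step validly transfers the bounded-difference hypothesis because $\abs{(V\setminus T)\cap(V\setminus T')} = s-1$ whenever $\abs{T\cap T'} = n-s-1$.

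There is, however, a gap in the constant. You assert $\abs{Y_i - Y_{i-1}} \le K$ and then claim that Azuma--Hoeffding yields $2\exp(-2\ell^2/(sK^2))$. But Azuma's inequality with the symmetric bound $\abs{Y_i - Y_{i-1}} \le K$ gives only $2\exp(-\ell^2/(2sK^2))$, which is a factor of $4$ weaker in the exponent. To get the stated constant you need the one-sided (range) form of Hoeffding's martingale inequality, whose hypothesis is that there exist $\cF_{i-1}$-measurable $a_i$ with $Y_i - Y_{i-1} \in [a_i, a_i + K]$; this gives $\exp(-2\ell^2/\sum(b_i - a_i)^2)$. Your coupling in fact proves exactly this stronger statement -- it bounds the oscillation of $u \mapsto \Exp[h(S)\mid \cF_{i-1}, X_i = u]$ by $K$, so conditionally $Y_i$ lies in an interval of length $K$ that contains its mean $Y_{i-1}$ -- so the argument is repaired simply by recording the increment control as a conditional range bound and invoking the range version of Hoeffding rather than Azuma with $\abs{Y_i - Y_{i-1}} \le K$. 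As written, though, the invocation does not yield the claimed exponent.
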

	
	\subsection{Property graph} \label{sec:property_graph}
	
	Given a property $\cP$ and a graph $H$ satisfying $\cP$, we are interested in which subgraphs of $H$ inherit the property $\cP$. Following~\cite{lang2023tiling}, this is formalised in terms of the property graph.
	
	\begin{definition}[Property graph] \label{def:property-graph}
		For an $m$-graph $H$ and a family of $s$-vertex $m$-graphs $\cP$, the \defn{property graph}, denoted by $\PG{H}{\cP}{s}$, is the $s$-graph on vertex set $V(H)$ with an edge $S \subset V(H)$ whenever the induced subgraph $H[S]$ \defn{satisfies}~$\cP$, that is $H[S] \in \cP$.
	\end{definition}
	
	We use the following result, which appeared in the context of hypergraph tilings~\cite[Lemma~4.4]{lang2023tiling}.
	For sake of completeness, its proof can be found in \cref{lem:almost-blow-up-cover}.
	
	\begin{lemma}[Almost perfect blow-up-tiling] \label{lem:covering-with-blow-ups}
		For all $2\leq k \leq s$, $m\geq 1$ and $\mu,\eta >0$, there is an $n_0>0$ such that the following holds for every $s$-vertex $k$-graph property $\cP$ and $k$-graph $H$ on $n \geq n_0$ vertices  with
		\begin{equation*}
			\delta_1 \bigl(\PG{H}{ \cP }{s}\bigr) \geq   (1 - 1/s + \mu) \tbinom{n-1}{s-1}.
		\end{equation*}
		All but at most $\eta n$ vertices of $H$ may be covered with pairwise vertex-disjoint $m$-regular blow-ups of members of $\cP$.
	\end{lemma}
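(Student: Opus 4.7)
The plan is to define an auxiliary constant-uniformity hypergraph $A$ on $V(H)$ that encodes local near-tileability by $m$-regular $\cP$-blow-ups, and to extract an almost-perfect matching in $A$ via Pippenger's nibble. Fix a sufficiently large integer $L = L(\mu, \eta, m, s, k)$ (a large multiple of $sm$), and let $A$ be the $L$-uniform hypergraph on $V(H)$ whose edges are those $L$-subsets $U$ such that $H[U]$ contains a vertex-disjoint family of $m$-regular blow-ups of members of $\cP$ covering all but at most $\eta L / 2$ vertices of $U$. An almost-perfect matching of $A$ then aggregates into the desired global tiling by taking the union of the blow-ups inside each matched $L$-set.

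The main technical step is to show $\delta_1(A) \geq (1 - \eta/4)\binom{n-1}{L-1}$. Fix $v \in V(H)$ and sample $U = U' \cup \set{v}$, where $U'$ is a uniformly random $(L-1)$-subset of $V(H) \setminus \set{v}$. The expected number of $s$-subsets of $U$ containing $v$ that lie in $E(\PG{H}{\cP}{s})$ is at least $(1 - 1/s + \mu)\binom{L-1}{s-1}$, and by \cref{lem:concentration} this count is tightly concentrated about its mean, with failure probability going to $0$ as $L \to \infty$. Since $\abs{\cP} \leq 2^{\binom{s}{k}}$ and each $s$-subset corresponds to a copy of some $F \in \cP$ under at most $s!$ labelings, by pigeonhole there exist a fixed $F^\ast \in \cP$ and a fixed vertex $y^\ast \in V(F^\ast)$ such that a positive constant fraction of the qualifying $s$-subsets induce $F^\ast$ via a labeling placing $v$ at $y^\ast$. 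A standard hypergraph supersaturation / K\H{o}v\'ari--S\'os--Tur\'an-type embedding lemma then yields an $m$-regular blow-up of $F^\ast$ inside $H[U]$, provided $L$ is large enough. Iterating this---remove the found blow-up, reapply concentration--pigeonhole--embedding to the remainder---produces disjoint blow-ups in $H[U]$ covering all but at most $\eta L / 2$ vertices of $U$.

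Combining, the auxiliary hypergraph $A$ satisfies $\delta_1(A) \geq (1 - \eta/4)\binom{n-1}{L-1}$, while its codegrees are trivially at most $\binom{n-2}{L-2} = O(\binom{n-1}{L-1}/n)$ and hence negligible compared to $\delta_1(A)$. Pippenger's nibble theorem therefore yields a matching $M$ in $A$ covering at least $(1 - \eta/2)n$ vertices. Collecting the blow-ups from each matched edge produces the required vertex-disjoint family of $m$-regular $\cP$-blow-ups covering at least $(1 - \eta)n$ vertices of $H$.

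The main obstacle is the supersaturation step inside the degree bound: converting the positive density of $F^\ast$-copies inside $H[U]$ (guaranteed by concentration and pigeonhole) into an actual $m$-regular blow-up structure, and iterating this embedding machinery so that only $\eta L /2$ vertices of $U$ remain uncovered. This is what forces $L$ to be chosen as a large constant depending on all of $\mu, \eta, m, s, k$, and requires careful bookkeeping to ensure the density estimate does not degrade across the $\Theta(L/sm)$ iterations.
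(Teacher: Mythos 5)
Your high-level strategy---encode local near-tileability in an auxiliary $L$-uniform hypergraph $A$ and extract an almost-perfect matching---is a legitimate alternative framing to the paper's (in fact, Pippenger is overkill: a uniformly random partition of $V(H)$ into $L$-blocks already yields the matching from the degree bound on $A$). However, the degree bound itself is where the proposal has a genuine gap.

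To show $\delta_1(A)\geq (1-\eta/4)\binom{n-1}{L-1}$ you need: for a uniformly random $L$-set $U$, with high probability $H[U]$ admits an almost-perfect tiling by $m$-regular $\cP$-blow-ups covering all but $\eta L/2$ vertices. This is precisely the statement of the lemma, but for the constant-order graph $H[U]$. Your concentration argument correctly establishes that a $(1-1/s+\mu/2)$-fraction of the $s$-subsets of $U$ are property-graph edges, and the pigeonhole/Erd\H{o}s step correctly produces \emph{one} $m$-regular blow-up in $H[U]$ (modulo a fixable issue: restricting to $s$-sets through $v$ and anchoring $v$ at a vertex $y^\ast$ produces a nearly-regular blow-up with a singleton part, so you should instead pigeonhole over all $s$-subsets of $U$). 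But the iteration ``remove the blow-up, reapply concentration--pigeonhole--embedding'' cannot cover a $(1-\eta/2)$-fraction of $U$. Each removal of an $sm$-vertex blow-up costs roughly $sm\binom{L-2}{s-2}$ qualifying $s$-subsets, and a direct computation shows the fractional density of property-graph edges in the residual set falls below $1-1/s$ after only $\Theta(\mu L/(sm))$ iterations, i.e.\ after covering only a $\Theta(\mu)$-fraction of $U$, not the required $(1-\eta/2)$-fraction. There is no deterministic ``reapply concentration'' step available either, since after the first removal the residual set is no longer a uniformly random subset. Acknowledging this as ``careful bookkeeping'' does not resolve it: the degradation is a structural obstacle, and overcoming it requires a genuinely different mechanism.

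The paper avoids this by working at the level of the $s$-uniform property graph $P=\PG{H}{\cP}{s}$ and using \cref{lem:blow-up-matching}: a tiling result for $K_s^{(s)}(b)$ in $s$-graphs of high minimum vertex degree, proved via \cref{lem:larger-matching}. The key idea there is \emph{not} greedy: given a tiling by large complete $s$-partite $s$-graphs, one forms a reduced $s$-graph on the blow-up parts, finds a perfect matching in the reduced graph via the Daykin--H\"aggkvist bound (\cref{lem:matching}), produces new tiles on fresh vertices inside matched part-families, and then recycles the leftover of the old tiles. Crucially, each application of \cref{lem:larger-matching} uses the \emph{original} degree condition on $P$, so there is no density degradation across the $O(\mu^{-2})$ iterations. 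Only then does the paper pass from $K_s^{(s)}(m')$-tiles in $P$ to $m$-regular $\cP$-blow-ups in $H$, by colouring the edges of each tile by the isomorphism type of the underlying $k$-graph and applying Erd\H{o}s's theorem (\cref{thm:erd64}) to the dominant colour. Your proposal would need to supply an analogue of the matching-and-recycle argument to make the degree bound on $A$ go through; as written, it does not.
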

	
	We define \defn{$\cP(\eps, k)$} to be the family of $k$-graphs $H$ with $\comin(H) \geq (1/2 + \eps)\abs{V(H)}$ and without any isolated vertices.
	The next lemma shows that $\cP(\eps,k)$ satisfies a local inheritance principle.
	
	\begin{lemma}\label{lem:robustmoregeneral}
		Let $1/n\ll 1/s \ll 1/k, 1/t, \eps$. Let $H$ be an $n$-vertex $k$-graph such that $H\in \cP(\eps,k)$, and let $T\subseteq V(H)$ be a $t$-subset. Let $S$ be an $(s-t)$-subset of $V(H) \setminus T$ chosen uniformly at random.
		Then, with probability at least $1-e^{-\sqrt{s}}$, we have that $H[T\cup S]\in \cP(\eps/2,k)$.
	\end{lemma}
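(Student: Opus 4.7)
The plan is to verify separately the two requirements for membership of $H[T\cup S]$ in $\cP(\eps/2,k)$: the minimum supported codegree bound $\comin(H[T\cup S])\ge (1/2+\eps/2)s$, and the absence of isolated vertices. Both reduce to an application of \cref{lem:concentration}, one per supported $(k-1)$-set and one per vertex, followed by a suitable union bound.

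For the codegree, I fix a $(k-1)$-set $U$ supported in $H$ and set $h_U(S):=|\{v\in (T\cup S)\setminus U : U\cup\{v\}\in E(H)\}|$. Replacing one vertex of $S$ changes $h_U$ by at most $1$, so \cref{lem:concentration} applies with $K=1$. Using $\deg_H(U)\ge (1/2+\eps)n$, $|T\cup S|=s$, and the fact that $t$ is negligible compared to $s$, a direct calculation yields $\Exp[h_U(S)]\ge (1/2+\eps)(s-t)\ge (1/2+3\eps/4)s$, so that $\Prob(h_U(S)<(1/2+\eps/2)s)\le 2\exp(-\Omega(\eps^2 s))$. Summing over all $(k-1)$-sets $U$ supported in $H$ and using that $\sum_U\Prob(U\subseteq T\cup S)\le \binom{s}{k-1}$ (as $T\cup S$ deterministically has size $s$), the probability that some supported $U\subseteq T\cup S$ violates the codegree condition is at most $s^{k-1}\cdot 2\exp(-\Omega(\eps^2 s))\le \tfrac{1}{2}e^{-\sqrt{s}}$.

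For the no-isolated-vertex condition, iterating \cref{claim:supported_minimum_degree} starting from $\comin(H)\ge (1/2+\eps)n$ gives $\comin_1(H)\ge \alpha n^{k-1}$ with $\alpha=(1/2+\eps)^{k-1}/(k-1)!$, so every non-isolated $v\in V(H)$ has at least $\alpha n^{k-1}$ edges through it. For each such $v$, let $Y_v(S):=|\{e\in E(H): v\in e,\ e\subseteq T\cup S\}|$; the lack of isolated vertices in $H[T\cup S]$ is equivalent to $Y_v\ge 1$ for every $v\in T\cup S$. A computation (splitting on whether $v\in T$) shows $\Exp[Y_v\mid v\in T\cup S]\ge c\,s^{k-1}$ for some $c=c(\eps,k)>0$. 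Swapping one vertex of $S$ changes $Y_v$ by at most $2\binom{s-2}{k-2}\le 2s^{k-2}$, so \cref{lem:concentration} with $K\le 2s^{k-2}$ yields
\[
\Prob(Y_v=0\mid v\in T\cup S)\le 2\exp\bigl(-\Omega(s)\bigr).
\]

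The chief subtlety lies here: a naive union bound over $v\in V(H)$ would incur a factor of $n$ that the $\exp(-\Omega(s))$ tail cannot absorb, because the hierarchy $1/n\ll 1/s$ permits $n$ to be arbitrarily large in $s$. I circumvent this with a first-moment argument: since $\sum_{v\in V(H)}\Prob(v\in T\cup S)=\Exp[|T\cup S|]=s$, the expected number of isolated vertices in $H[T\cup S]$ is at most $s\cdot 2\exp(-\Omega(s))\le \tfrac{1}{2}e^{-\sqrt{s}}$, and Markov's inequality converts this into a bound on the probability that $T\cup S$ contains any isolated vertex. Combining with the codegree bound completes the proof.
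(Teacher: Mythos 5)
Your proof takes essentially the same approach as the paper's: verify the codegree condition and the no-isolated-vertex condition separately, in each case applying the vertex-swap concentration inequality (\cref{lem:concentration}) for a fixed supported $d$-set ($d = k-1$ and $d = 1$ respectively), and then use the observation $\sum_D \Prob(D \subseteq T \cup S) \le \binom{s}{d}$ to cut the cost of the union bound from $\binom{n}{d}$ down to $\binom{s}{d}$, so that the hierarchy $1/n \ll 1/s$ is not an obstacle. The ``first-moment plus Markov'' device you highlight for isolated vertices is exactly the weighted union bound the paper performs under the phrase ``union bound over the supported $d$-subsets $D \subseteq S$''.

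There is one small gap in the codegree half. The weighted union bound $\sum_U \Prob(U \subseteq T\cup S)\cdot(\cdots)$ requires the \emph{conditional} tail $\Prob\bigl(h_U(S) < (1/2+\eps/2)s \mid U \subseteq T\cup S\bigr)$ in the second factor, but you only derive the unconditional tail from the unconditional expectation $\Exp[h_U(S)]$. Compare with your isolated-vertex half, where you correctly write $\Prob(Y_v = 0 \mid v \in T \cup S)$. The fix is to condition on $U \subseteq T\cup S$ throughout, exactly as the paper does when it ``conditions on the event that $D \subseteq S'$'': this forces $U \setminus T \subseteq S$ and so removes at most $k-1$ of the $s-t$ free slots; since $k$ and $t$ are negligible next to $s$, the swap-Lipschitz constant is still $K=1$ and the conditional expectation is still at least $(1/2 + 3\eps/4)s$, so the same tail bound goes through. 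Once this is patched, your argument matches the paper's in substance.
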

	
	\begin{proof}
		Let $S$ be an $(s-t)$-subset of $V(H) \setminus T$ chosen uniformly at random and $S'=T \cup S$.
		Observe that $S'$ is distributed as a uniformly randomly chosen set among all $s$-sets that include $T$.
		Fix $d \in \set{1,k - 1}$, let $D \subseteq V(H)$ be a supported $d$-set and set $h(S') = \deg_{H[S']}(D)$.
		Using \Cref{lem:concentration}, we show that $h(S')$ is concentrated around its expectation.
		We give all the calculations for $d=k - 1$, but we omit those for $d=1$, as they are analogous.

		Let $d=k - 1$ and condition on the event that $D \subseteq S'$.
		For a fixed edge $e \in E(H)$ with $e \cap (T \cup D) = D$, the probability that $e \subset S'$ is
		\begin{align}
			\label{eq:probability_e_S'}
			\begin{split}
				\binom{n - \abs{T \cup D \cup e}}{s - \abs{T \cup D \cup e}} \binom{n - \abs{T \cup D}}{s - \abs{T \cup D}}^{-1}
				& = \frac{(s)_{q}}{(n)_{q}} \binom{n-k}{s-k} \biggl(\frac{(s)_{q}}{(n)_{q}}\binom{n-d}{s-d}\biggr)^{-1} \\
				& = \binom{n-k}{s-k} \binom{n-d}{s-d}^{-1} = \binom{s-d}{k-d} \binom{n-d}{k-d}^{-1}\, ,
			\end{split}
		\end{align}
		where $q = \abs{T \setminus D}$ and $(s)_q, (n)_q$ denote the falling factorials.
		By assumption, there are at least $(1/2 + \eps) n - q$ edges $e \in E(H)$ with $e \cap (T \cup D) = D$, and thus $\Exp [h(S')] \geq  (1/2 + (3/4) \eps) s$.
		
		Moreover, for any two $s$-sets $R,R' \subset V(H)$  with $\abs{R \cap R'} = s - 1$ that both contain $T \cup D$, we have $\abs{h(R) - h(R')} \leq 1$.
		So by \cref{lem:concentration} applied with $\ell = (\eps/4) s$ and $K = 1$, we have
		\begin{align*}
			\Prob\bigl[h(S) < (1/2 + \eps/2) s \bigr] & \leq \Prob \bigl[\Exp[h(S)] - h(S) \geq  (\eps/4) s \bigr]                                                                             \\
			& \leq 2 \exp\biggl(-\frac{\eps^2 s^2}{8} \cdot \frac{1}{s - \abs{T \cup D}}\biggr) \leq \frac{1}{2} \binom{s}{d}^{-1} \exp(-\sqrt{s}) .
		\end{align*}
		Therefore by the union bound over the supported $d$-subsets $D \subseteq S$, we get that $\comin(H[S']) \ge (1/2+\eps/2)s$ with probability at least $1-\frac{1}{2}e^{-\sqrt{s}}$.
		
		Since $H \in \cP(\eps,k)$, $H$ does not have isolated vertices.
		In particular, every vertex is supported and, by \Cref{claim:supported_minimum_degree}, is contained in at least $\frac{\comin(H)}{k - 1} \cdot \frac{\comin(H)}{k-2} \cdot \dots \cdot \frac{\comin(H)}{2} \cdot \comin(H) = \Omega_k(n^k)$ edges of $H$.
		Therefore we can proceed as above and prove that, with probability at least $1-\frac{1}{2}e^{-\sqrt{s}}$, every vertex of $S'$ is contained in at least one edge of $H[S']$.
		
		We conclude that $H[S']=H[T \cup S] \in \cP(\eps/2,k)$ with probability at least $1-e^{-\sqrt{s}}$, as desired.
	\end{proof}
	
	The $t=1$ case of \cref{lem:robustmoregeneral} has the following important corollary.
	
	\begin{corollary}[Property graph is robust]\label{cor:propgraphrobust}
		Let $1/n\ll 1/s \ll 1/k, \eps$. Let $H$ be an $n$-vertex $k$-graph such that $H\in \cP(\eps,k)$. Then,
		\begin{equation*}
			\delta_1 \bigl(\PG{H}{ \cP(\eps/2,k) }{s} \bigr) \geq \bigl(1-1/s^2 \bigr) \tbinom{n-1}{s-1}.
		\end{equation*}
	\end{corollary}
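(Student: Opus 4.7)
The plan is to apply \cref{lem:robustmoregeneral} with $t=1$ and then convert the resulting probability bound into the required lower bound on the minimum vertex degree of the property graph.

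More explicitly, I would fix an arbitrary vertex $v \in V(H)$ and set $T = \set{v}$. By definition of the property graph, the degree of $v$ in $\PG{H}{\cP(\eps/2,k)}{s}$ equals the number of $(s-1)$-subsets $S$ of $V(H) \setminus \set{v}$ for which $H[\set{v} \cup S] \in \cP(\eps/2,k)$. If $S$ is chosen uniformly at random among such $(s-1)$-subsets, then \cref{lem:robustmoregeneral} applied with $T = \set{v}$ (so $t=1$) guarantees that the event $H[\set{v} \cup S] \in \cP(\eps/2,k)$ holds with probability at least $1 - e^{-\sqrt{s}}$. Therefore
\begin{equation*}
    \deg_{\PG{H}{\cP(\eps/2,k)}{s}}(v) \ge (1 - e^{-\sqrt{s}}) \tbinom{n-1}{s-1}.
\end{equation*}

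Finally, since $1/s \ll 1/k, \eps$, we may assume $s$ is large enough to ensure $e^{-\sqrt{s}} \le 1/s^2$. Combining this with the previous display and taking the minimum over all $v \in V(H)$ yields the desired bound. There is no real obstacle here: the whole argument is just reading off the $t=1$ case of the preceding lemma and absorbing the exponential error into the polynomial $1/s^2$ slack allowed by the hierarchy.
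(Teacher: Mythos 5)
Your proof is correct and is exactly the paper's argument: the paper explicitly introduces the corollary as "the $t=1$ case of \cref{lem:robustmoregeneral}," which is precisely the reduction you carry out. The translation from a probability bound of $1-e^{-\sqrt{s}}$ to a count of at least $(1-e^{-\sqrt{s}})\binom{n-1}{s-1}$ qualifying $(s-1)$-sets, followed by $e^{-\sqrt{s}}\le 1/s^2$ for $s$ large, is the intended route.
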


	\section{Blow-up chains}\label{sec:blow-up}
	
	This section is dedicated to the proof of \cref{lem:findblowupchain}, for which we need a few preliminary results concerning blow-ups. The first one is a well-known insight of Erd\H{o}s~\cite{Erdos1964hypextremal}, stating that the Tur\'an density of $K_s^{(s)}(b)$ is zero, where \defn{$K_s^{(s)}(b)$} denotes the complete $s$-partite $s$-graph with each part of size $b$.
	
	\begin{theorem}
		\label{thm:erd64}
		For all $s\geq 2$, $b \geq 1$ and $\gamma > 0$, there is $n_0>0$ such that every $s$-graph $P$ on $n\geq n_0$ vertices with $e(P) \geq \gamma n^s$ contains a copy of $K_s^{(s)}(b)$.
	\end{theorem}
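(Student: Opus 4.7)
The plan is to prove this by induction on the uniformity $s$, using the standard supersaturation/convexity argument going back to Erd\H{o}s. The base case $s = 1$ is trivial: $K_1^{(1)}(b)$ is just $b$ vertices that form singleton edges, so any $1$-graph with at least $\gamma n \geq b$ edges contains it, which holds for $n_0 \geq b/\gamma$.

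For the inductive step, assume the theorem holds for uniformity $s - 1$ with some threshold $n_0(s - 1, b, \gamma')$ for all $\gamma' > 0$. Given an $s$-graph $P$ on $n$ vertices with $e(P) \geq \gamma n^s$, the strategy is to find a $b$-set $B \subset V(P)$ whose common link, i.e. the $(s - 1)$-graph $N(B) = \{S \in \binom{V(P) \setminus B}{s - 1} : S \cup \{v\} \in E(P) \text{ for every } v \in B\}$, is dense enough to apply induction. I would count pairs $(S, B)$ where $S$ is an $(s - 1)$-set and $B$ is a $b$-subset of the codegree neighbourhood of $S$: the number of such pairs equals
\begin{equation*}
\sum_{S \in \binom{V(P)}{s - 1}} \binom{\deg_P(S)}{b}.
\end{equation*}
Since $\sum_S \deg_P(S) = s \cdot e(P) \geq s \gamma n^s$, the average codegree is at least $s \gamma n^s / \binom{n}{s - 1} \geq s!\, \gamma\, n$. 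By Jensen's inequality applied to the convex function $x \mapsto \binom{x}{b}$, the sum above is at least $\binom{n}{s - 1} \binom{s!\, \gamma\, n}{b} \geq c\, n^{s - 1 + b}$ for some constant $c = c(s, b, \gamma) > 0$ and $n$ large. By averaging over the $\binom{n}{b}$ choices of $B$, there is a $b$-set $B$ with $|N(B)| \geq c'\, n^{s - 1}$ for a suitable $c' > 0$.

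Now $N(B)$ is an $(s - 1)$-graph on at most $n$ vertices with edge density at least $c'$, so by the inductive hypothesis (with $\gamma' = c'$) it contains a copy of $K_{s - 1}^{(s - 1)}(b)$ with parts $A_1, \dotsc, A_{s - 1}$, provided $n$ is large enough. By construction of $N(B)$, for every transversal $(a_1, \dotsc, a_{s - 1}) \in A_1 \times \dotsb \times A_{s - 1}$ and every $v \in B$, the set $\{a_1, \dotsc, a_{s - 1}, v\}$ lies in $E(P)$. Hence $A_1, \dotsc, A_{s - 1}, B$ form the $s$ parts of a copy of $K_s^{(s)}(b)$ in $P$, completing the induction.

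The only mildly delicate point is ensuring that the Jensen bound is applied in a regime where $\binom{d}{b}$ is already growing like $d^b$, i.e.\ that the average codegree exceeds $b$; this is automatic once $n$ is large compared to $s, b, 1/\gamma$. Everything else is routine counting and a clean transversal argument, so I do not expect a genuine obstacle.
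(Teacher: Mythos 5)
The paper does not prove this statement: it is stated as a classical result of Erd\H{o}s and cited to~\cite{Erdos1964hypextremal}, so there is no internal proof to compare against. Your argument is correct and is essentially Erd\H{o}s's original supersaturation proof by induction on the uniformity. The base case $s=1$ is fine, and the inductive step is sound: you double-count pairs $(S,B)$ with $B$ a $b$-subset of the codegree neighbourhood of the $(s-1)$-set $S$, getting $\sum_{S}\binom{\deg_P(S)}{b}$; by Jensen (using the convex function equal to $\binom{x}{b}$ for $x\geq b-1$ and $0$ below, which agrees with $\binom{x}{b}$ at all nonnegative integers) this is at least $\binom{n}{s-1}\binom{s!\gamma n}{b}=\Omega(n^{s-1+b})$; averaging over the $\binom{n}{b}$ choices of $B$ gives a $b$-set whose common link $N(B)$ has $\Omega(n^{s-1})$ edges, and the inductive hypothesis gives $K_{s-1}^{(s-1)}(b)\subset N(B)$ with parts automatically disjoint from $B$ since $N(B)$ lives on $V(P)\setminus B$; attaching $B$ yields $K_s^{(s)}(b)$. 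The one caveat you raise about the Jensen step is precisely handled by the truncation just described, so there is no gap.
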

	
	Next, we present two applications of \cref{thm:erd64}.
	For a partition $\cV = \set{V_1, \dotsc, V_s}$ of a ground set $V$, a subset $A \subset V$ is called \defn{$\cV$-partite} if it has at most one element in each part.
	A $k$-graph $H$ on vertex set $V$ is \defn{$\cV$-partite} if all its edges are $\cV$-partite.
	We often do not explicitly mention the partition $\cV$ and just speak of an \defn{$s$-partite} graph $H$.
	Finally a blow-up $F^\ast \subset H$ of a $k$-graph $F$ is called \defn{consistent} in the $\cV$-partite $H$ if the parts of $F^\ast$ can be written $U_1, \dotsc, U_s$ such that $U_i \subset V_i$ for each $i \in [s]$.
	
	\begin{lemma}\label{lem:pigeonhole}
		Let $1/m_2 \ll 1/m_1, 1/s, 1/k$.
		Let $\cP$ be a family of $s$-vertex $k$-graphs. Let $\cA$ be an $s$-partite $k$-graph with parts $A_1, \dotsc, A_s$ where $\abs{A_i} = m_2$ for all $i\in[s]$. Suppose that every $\set{A_i}_{i \in [s]}$-partite $s$-set induces a member of $\cP$ in $\cA$. Then, there exists some $P \in \cP$ so that $\mathcal{A}$ contains a consistent $m_1$-regular blow-up of $P$.
	\end{lemma}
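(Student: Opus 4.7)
The plan is to classify every $\{A_i\}_{i\in[s]}$-partite $s$-set by the labelled edge pattern it induces, apply pigeonhole to isolate a frequent pattern, and then invoke \cref{thm:erd64} within the partite host to extract a large consistent complete partite substructure.

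More concretely, for each transversal $(v_1,\dotsc,v_s)\in A_1\times\dotsb\times A_s$ I would record its labelled pattern $\cE(v_1,\dotsc,v_s)\subseteq \binom{[s]}{k}$ consisting of those $I$ for which $\{v_i\colon i\in I\}\in E(\cA)$. By hypothesis this pattern encodes a labelled representative of some $P\in\cP$. Since there are at most $2^{\binom{s}{k}}$ possible patterns and $m_2^s$ transversals, pigeonhole yields one pattern $\cE$ — corresponding to a fixed $P\in\cP$ on vertex set $[s]$ with edge set $\cE$ — realised by at least $m_2^s\cdot 2^{-\binom{s}{k}}$ transversals. I then form the auxiliary $s$-partite $s$-graph $Q$ on $A_1\cup\dotsb\cup A_s$ whose edges are exactly these transversals; setting $\gamma := s^{-s}\cdot 2^{-\binom{s}{k}}$ gives $e(Q)\geq \gamma(sm_2)^s$, so since $1/m_2\ll 1/m_1,1/s,1/k$, \cref{thm:erd64} applied with $b=m_1$ produces a copy of $K_s^{(s)}(m_1)$ in $Q$ with parts $B_1',\dotsc,B_s'$ of size $m_1$.

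The most delicate step, and the main obstacle, is verifying that this copy is aligned with the partition $\{A_i\}$, since \cref{thm:erd64} is stated for arbitrary $s$-graphs and does not see the partite structure. However, partiteness forces the alignment: if some $B_j'$ met two distinct parts $A_p$ and $A_q$, then fixing any choice $(c_i)_{i\neq j}\in\prod_{i\neq j}B_i'$ and successively picking the $j$-th coordinate in $B_j'\cap A_p$ and then in $B_j'\cap A_q$ would, by partiteness, force the $c_i$ into distinct parts of $[s]\setminus\{p,q\}$ — placing $s-1$ vertices into $s-2$ parts, impossible. So each $B_j'\subseteq A_{\sigma(j)}$ for some map $\sigma\colon [s]\to[s]$, and $\sigma$ must be a bijection because partite transversals cannot repeat parts. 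After relabelling, I obtain $B_i\subseteq A_i$ of size $m_1$ such that every transversal of $B_1\times\dotsb\times B_s$ has labelled pattern $\cE$. Extending any tuple $(u_i)_{i\in I}\in \prod_{i\in I}B_i$ to a full transversal in $B_1\times\dotsb\times B_s$ then shows $\{u_i\colon i\in I\}\in E(\cA)$ iff $I\in\cE$, so $\cA[B_1\cup\dotsb\cup B_s]$ is exactly the consistent $m_1$-regular blow-up of $P$, as required.
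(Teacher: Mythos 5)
Your proof is correct and follows essentially the same approach as the paper: colour the transversal $s$-sets by their labelled induced pattern, pigeonhole to a frequent colour, and apply Erdős's theorem to extract a $K_s^{(s)}(m_1)$. The only difference is that you explicitly verify that the $K_s^{(s)}(m_1)$ produced by \cref{thm:erd64} must be aligned with the partition $\set{A_i}$ (a detail the paper leaves implicit), which is a worthwhile addition.
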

	
	\begin{proof}
		Colour every $\set{A_i}_{i \in [s]}$-partite $s$-set $S$ by a pair $(P, \phi)$ where $P \in \cP$ and the function $\phi \colon \set{A_1, \dotsc, A_s} \to V(P)$ is induced by the isomorphism between $P$ and the $k$-graph induced by $S$. Note that this is possible by the hypotheses of the lemma. Observe that the number of such pairs is at most $\abs{\cP} \cdot s! \leq 2^{s^k}s!$, so, by averaging, we may find a monochromatic subgraph of $\mathcal{A}$ with at least $(2^{s^k}s!)^{-1} m_2^{s} = (2^{s^k} s! s^s)^{-1} (s m_2)^{s}$ edges. Applying \cref{thm:erd64} with $\gamma = (2^{s^k} s! s^s)^{-1}$ and $b = m_1$ we can guarantee the existence of the desired $m_1$-regular blow-up (as $1/m_2\ll 1/m_1, 1/s$).
	\end{proof}
	
	\begin{lemma}[Rooted blow-ups]\label{lem:rooted-blow-ups}
		Let $1/n \ll 1/m_2 \ll 1/m_1, 1/s,1/k$, $0 \leq q < s$ and $\mu>0$.
		Let~$G$ be an $n$-vertex $k$-graph.
		Let $\cV = \set{V_i}_{i \in [q]}$ be a family of pairwise disjoint $m_2$-sets in $V(G)$.
		Let $\cP$ be a family of $s$-vertex $k$-graphs.
		Suppose that, for every $\cV$-partite $q$-set $X$, there are at least $\mu n^{s - q}$ subsets $S \subseteq V(G) \setminus \big(X \cup V_1 \cup \dots \cup V_q\big)$ of size $s - q$ such that $G[S \cup X] \in \cP$.
		Then there is a family $\cU = \set{U_i}_{i \in [s - q]}$ of pairwise disjoint $m_1$-sets disjoint with $\bigcup_{i\in[q]} V_i$ such that the $s$-partite $k$-graph induced by $\cV \cup \cU$ contains a consistent $m_1$-regular blow-up of some $P\in \cP$.
	\end{lemma}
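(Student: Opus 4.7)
The plan is to mimic the proof of \cref{lem:pigeonhole}, preceded by a random selection step that produces candidate parts $U'_1, \dots, U'_{s-q}$ in which partite $s$-sets inducing members of $\cP$ appear with positive density. I would first sample pairwise disjoint $m_2$-sets $U'_1, \dots, U'_{s-q}$ uniformly at random in $V(G) \setminus (V_1 \cup \dots \cup V_q)$. A direct computation shows that for each fixed $(s-q)$-set $S$ outside $V_1 \cup \dots \cup V_q$, the probability that $S$ is a partite transversal of $(U'_j)_{j \in [s-q]}$ is $(1 + o(1))(s-q)!(m_2/n)^{s-q}$, the lower-order correction from the disjointness constraint being negligible since $m_2 \ll n$. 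Combined with the hypothesis applied to each of the $m_2^q$ possible $\cV$-partite $q$-sets $X$, this yields an expected number of good partite $s$-sets of the form $X \cup S$ (with $G[X \cup S] \in \cP$) of at least
\begin{equation*}
    m_2^q \cdot \mu n^{s-q} \cdot (1 + o(1))(s-q)!(m_2/n)^{s-q} \;\geq\; \tfrac{1}{2}\mu (s-q)!\, m_2^s.
\end{equation*}
I then fix a realization of $U'_1, \dots, U'_{s-q}$ achieving at least this expected count.

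Next, adopting the strategy of \cref{lem:pigeonhole}, I would colour each such good partite $s$-set by its isomorphism type $(P, \phi)$, where $P \in \cP$ and $\phi \colon \{V_1, \dots, V_q, U'_1, \dots, U'_{s-q}\} \to V(P)$ is the bijection induced by the isomorphism between the induced $k$-graph and $P$. Since the number of colours is at most $|\cP| \cdot s! \leq 2^{\binom{s}{k}} s!$, by averaging some colour class contains at least $\gamma m_2^s$ partite good $s$-sets sharing a common type $(P, \phi)$, for some $\gamma = \gamma(\mu, s, k) > 0$. Viewing this colour class as an $s$-partite $s$-graph on $sm_2$ vertices with at least $(\gamma/s^s)(sm_2)^s$ edges, \cref{thm:erd64} (applicable since $1/m_2 \ll 1/m_1, 1/s$) produces a copy of $K_s^{(s)}(m_1)$. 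One then verifies that this copy respects the ambient partition: if two vertices $w, w'$ of a common blow-up part of $K_s^{(s)}(m_1)$ lay in distinct ambient parts $A \neq A'$, then fixing a common transversal $v_2, \dots, v_s$ of the remaining blow-up parts produces two edges $\{w, v_2, \dots, v_s\}$ and $\{w', v_2, \dots, v_s\}$ of the colour class that must both be partite with respect to the ambient partition, forcing one of them to have two vertices in a single ambient part, a contradiction. Thus each of the $s$ blow-up parts lies in a distinct ambient part. Defining $U_j$ to be the $m_1$-subset of $U'_j$ corresponding to it, the monochromaticity immediately gives that the $U_j$'s together with the $m_1$-subsets located inside the $V_i$'s form a consistent $m_1$-regular blow-up of $P$.

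The principal technical point is verifying that the blow-up produced by \cref{thm:erd64} aligns with the ambient $s$-partition, which is handled by the partiteness argument sketched above. The random selection step is routine, resting on the density hypothesis and on the fact that $m_2 \ll n$ makes the correction arising from disjointness of the $U'_j$'s negligible.
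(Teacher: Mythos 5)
Your proof is correct, and the random selection step producing $\cU'$ with many $(\cV\cup\cU')$-partite $s$-sets inducing members of $\cP$ matches the paper's opening claim almost verbatim. You then diverge from the paper's finishing argument. The paper first applies \cref{thm:erd64} to the dense $s$-partite auxiliary graph $H'$ to extract a complete partite $K_s^{(s)}(m_{1.5})$ in which \emph{every} partite $s$-set induces a member of $\cP$, and then invokes \cref{lem:pigeonhole} as a black box (which internally colours by the pair $(P,\phi)$ and applies \cref{thm:erd64} a second time). You instead colour the not-yet-complete dense partite graph immediately by $(P,\phi)$, pigeonhole to a monochromatic class containing a positive constant fraction of the partite $s$-sets, and apply \cref{thm:erd64} just once. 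This is a modest simplification: it dispenses with the intermediate scale $m_{1.5}$ (with $1/m_2 \ll 1/m_{1.5} \ll 1/m_1$) and with the second Erd\H{o}s application. Your explicit check that the resulting $K_s^{(s)}(m_1)$ aligns with the ambient $s$-partition (no blow-up part can straddle two ambient parts, since swapping the two offending vertices within a fixed transversal would violate partiteness) makes precise a step that the paper leaves implicit both in its own invocation of \cref{thm:erd64} and inside \cref{lem:pigeonhole}; it is correct and worth stating.
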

	
	\begin{proof}
		We begin with the following claim which follows from a simple first moment argument.
		\begin{claim}
			There is a family $\cU' = \set{U_i'}_{i \in [s - q]}$ of pairwise disjoint $m_2$-sets disjoint with $\bigcup_{i\in[q]} V_i$ such that at least $\mu m_2^s$ $(\cV \cup \cU')$-partite $s$-sets induce a member of $\cP$ in $G$.
		\end{claim}
		\begin{proof}
			Let $V'=V(G) \setminus (V_1 \cup \dots \cup V_q)$ and $U_1', \dots, U_{s - q}'$ be defined recursively as follows: for each $i \in [s - q]$, $U_i'$ is chosen uniformly at random among all $m_2$-subsets of $V' \setminus \big(U_1' \cup \dots \cup U_{i-1}'\big)$.
			Then denote $\cU' = \set{U_i'}_{i \in [s - q]}$.
			
			Fix a subset  $S'\subseteq V(G)$ of the form $S'=X \cup S$, where $X$ is a $\cV$-partite $q$-set and $S \subseteq V'$ (of size $s - q$).
			Observe that $S'$ is $(\cV \cup \cU')$-partite if and only if $S$ is $\cU'$-partite.
			Therefore
			\begin{align*}
				\mathbb{P}\big[S' \text{ is } (\cV \cup \cU')\text{-partite}\big] \ge & \, \frac{\binom{\abs{V'}-(s - q)}{m_2-1}}{\binom{\abs{V'}}{m_2}} \cdot
				\frac{\binom{\abs{V'}-m_2-(s - q-1)}{m_2-1}}{\binom{\abs{V'}-m_2}{m_2}} \cdot                                                                  \\
				& \cdot
				\frac{\binom{\abs{V'}-2m_2-(s - q-2)}{m_2-1}}{\binom{\abs{V'}-2m}{m_2}} \cdot
				\dotsc
				\cdot
				\frac{\binom{\abs{V'}-(s - q-1)m_2-1}{m_2-1}}{\binom{\abs{V'}-(s - q)m_2}{m_2}}                                                                \\
				=                                                                     & \, m_2^{s - q} \cdot \frac{(\abs{V'}-(s - q))!}{\abs{V'}!}\, ,
			\end{align*}
			where the first expression is the probability that, for a fixed arbitrary ordering of the elements of $S$, say $v_1, \dots, v_{s - q}$, we have that among all the elements of $S$, the set $U_i'$ contains $v_i$ only.
			
			By assumption, there are at least $m_2^q \cdot \mu n^{s - q}$ subsets $S'$ of the form above such that $G[S'] \in \cP$.
			Then by linearity of expectation, the number of such subsets $S'$ which in addition are $(\cV \cup \cU')$-partite is at least
			\begin{equation*}
				m_2^q \cdot \mu n^{s - q} \cdot m_2^{s - q} \cdot \frac{(\abs{V'}-(s - q))!}{\abs{V'}!} \ge \mu m_2^s\, .
			\end{equation*}
			where we used that $\abs{V'} \le n$.
			We conclude that there is a choice of $\cU'$ with the desired properties.
		\end{proof}
		Let $H'$ be the $s$-partite $s$-graph with parts $V_1, \dotsc, V_q, U'_1, \dotsc, U'_{s - q}$ where a $(\cV \cup \cU')$-partite $s$-set is an edge if it induces a member of $\cP$ in $G$.
		Fix a new constant $m_{1.5}$ such that $1/m_2\ll 1/m_{1.5}\ll 1/m_1$. By \cref{thm:erd64} (applied to $H'$ with $\gamma = \mu$ and $b = m_{1.5}$), $H'$ contains a copy of $K_s^{(s)}(m_{1.5})$.
		Therefore there exist $U_i \subseteq U_i'$ for each $i \in [s - q]$ and $V_j' \subseteq V_j$ for each $j \in [q]$ such that $\abs{U_i} = \abs{V_j'} = m_{1.5}$ for each $i$ and $j$, and every $(\set{U_i}_{i \in [s - q]} \cup \set{V'_j}_{j \in [q]})$-partite $s$-set induces a member of $\cP$ in $G$. \Cref{lem:pigeonhole} then implies the desired result with $\cU = \set{U_i}_{i \in [s - q]}$.
	\end{proof}

	We are now ready to prove the main result of this section, \cref{lem:findblowupchain}.
	
	\begin{proof}[Proof of \cref{lem:findblowupchain}]
		Introduce constants $\eta$ and $m_{1.5}$ with $1/n \ll \eta \ll 1/m_2 \ll 1/m_{1.5} \ll 1/m_1$. Apply \cref{lem:covering-with-blow-ups} with $1/s-1/s^2$, $m_2$, $\mathcal{P}(\eps/2, k)$ playing the roles of $\mu$, $m$, $\mathcal{P}$, respectively (the other variables have identical names), noting the hypothesis on $\delta_1$ holds directly by \cref{cor:propgraphrobust}. This gives us a collection of $s$-vertex $k$-graphs $R_1,\dotsc, R_\ell\in \mathcal{P}(\eps/2, k)$ such that there are pairwise vertex-disjoint $m_2$-regular blow-ups of each of the $R_i$, say $R_i^\ast(m_2)$, which together cover all but $\eta n$ vertices of $H$. We denote by $L$ the set of the uncovered vertices.
		
		Next, we find vertex-disjoint blow-ups that cover the vertices of $L$.
		\begin{claim}
			\label{claim:saturatingL}
			There exist $s$-vertex $k$-graphs $L_1,\dotsc, L_{\ell'}\in \mathcal{P}(\eps/2, k)$ such that $H$ contains vertex-disjoint $(0,m_{1.5})$-nearly-regular blow-ups of each of the $L_i$, say $L_i^\ast(m_{1.5})$, which together cover $L$, and furthermore, for each $i\in [\ell]$, $R_i^\ast(m_2)\setminus \bigcup_{j\in [\ell']} L_j^\ast(m_{1.5})$ is a $(\gamma/2, m_2)$-regular blow-up of $R_i$.
		\end{claim}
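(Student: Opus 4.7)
The plan is to process the vertices of $L$ one at a time. Enumerate $L = \{v_1, \dotsc, v_t\}$ with $t \leq \eta n$, and for each $v_j$ in sequence, the goal is to exhibit a $(0, m_{1.5})$-nearly-regular blow-up $L_j^\ast$ of some $L_j \in \mathcal{P}(\eps/2, k)$ such that $v_j$ is the singleton part while the remaining $s - 1$ parts (each of size exactly $m_{1.5}$) are drawn from the available vertices inside $\bigcup_i V(R_i^\ast(m_2))$. For bookkeeping I declare a part of some $R_i^\ast(m_2)$ to be \emph{saturated} once it has contributed at least $\gamma m_2 / 2$ vertices to earlier $L_{j'}^\ast$'s; saturated parts are forbidden from future use. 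Writing $V''$ for the union of the non-saturated, not-yet-used vertices in the $R_i^\ast(m_2)$'s, this convention automatically forces the residual $R_i^\ast(m_2) \setminus \bigcup_j V(L_j^\ast)$ to be a $(\gamma/2, m_2)$-regular blow-up of $R_i$ at the end of the procedure.

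To produce $L_j^\ast$ I will use a singleton-rooted analogue of \cref{lem:rooted-blow-ups}. First, apply \cref{lem:robustmoregeneral} to $H \in \mathcal{P}(\eps, k)$ with $T = \{v_j\}$ (so $t = 1$): a uniformly random $(s - 1)$-subset $S$ of $V(H) \setminus \{v_j\}$ satisfies $H[\{v_j\} \cup S] \in \mathcal{P}(\eps/2, k)$ with probability at least $1 - e^{-\sqrt{s}}$. Combined with the bound $|V''| \ge (1 - o(1))n$ established below, a union bound with the event $S \subseteq V''$ yields at least $\mu |V''|^{s - 1}$ good subsets $S \subseteq V''$ for some absolute constant $\mu > 0$. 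Next, I mimic the proof of \cref{lem:rooted-blow-ups} with the $m_2$-set root replaced by the singleton $\{v_j\}$: colour each good $S$ by the labelled isomorphism type of $(H[\{v_j\} \cup S], v_j)$; pigeonhole over the finitely many types produces a prolific type $L_j \in \mathcal{P}(\eps/2, k)$; a first-moment argument furnishes pairwise disjoint $m_2$-sets $U_1', \dotsc, U_{s - 1}' \subseteq V''$ supporting many labelled good transversals; and \cref{thm:erd64} applied to the resulting $(s - 1)$-partite auxiliary graph refines these to subsets $U_i \subseteq U_i'$ of size $m_{1.5}$ with the property that every labelled transversal extends $v_j$ to a copy of $L_j$. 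Setting $L_j^\ast := \{v_j\} \cup U_1 \cup \dots \cup U_{s - 1}$ and marking its non-$v_j$ vertices as used completes the step.

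The main obstacle is to maintain $|V''| \geq (1 - o(1)) n$ throughout, so that \cref{lem:robustmoregeneral} and \cref{thm:erd64} remain applicable at every step. The total number of vertices consumed over all iterations is at most $|L| \cdot (s - 1) m_{1.5} \leq \eta n (s - 1) m_{1.5}$, which bounds the number of saturated parts by $2\eta n (s - 1) m_{1.5} / (\gamma m_2)$ and hence caps their combined vertex count at $2 \eta n (s - 1) m_{1.5} / \gamma$. By the hierarchy $\eta \ll 1/m_2 \ll 1/m_{1.5} \ll 1/s, \gamma$, both quantities are $o(n)$, so indeed $|V''| \geq (1 - o(1)) n$ at every step. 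In particular, the minimum supported codegree of $H$ restricted to $V'' \cup \{v_j\}$ is still at least $(1/2 + \eps/2) |V''|$, which is precisely the input condition required to invoke \cref{lem:robustmoregeneral}, while the density $\mu$ is enough for \cref{thm:erd64}. This completes the inductive construction and delivers the families $L_1, \dotsc, L_{\ell'}$ and $L_1^\ast, \dotsc, L_{\ell'}^\ast$ claimed.
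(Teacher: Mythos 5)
Your proof follows essentially the same approach as the paper's: a greedy saturation procedure with parts declared off-limits once they have been heavily depleted, an application of \cref{lem:robustmoregeneral} with $T$ a singleton, and a colouring/\cref{thm:erd64} argument to build the blow-up around the uncovered vertex. The main cosmetic difference is the ordering: the paper first applies \cref{thm:erd64} to an auxiliary $(s-1)$-graph $Q$ to get $K_{s-1}^{(s-1)}(m_2)$, then artificially blows up $v$ by $m_2$ copies and invokes \cref{lem:pigeonhole}; you colour first, then run a first-moment argument and a single application of \cref{thm:erd64}. These are interchangeable.

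There is, however, a quantitative slip that makes the stated conclusion false as written. You declare a part of $R_i^\ast(m_2)$ saturated once it has contributed at least $\gamma m_2/2$ vertices. But a part that is \emph{not yet} saturated (contribution $< \gamma m_2/2$) can lose up to $(s-1)m_{1.5}$ additional vertices in the very next step (the disjoint sets $U_1, \dotsc, U_{s-1}$ are not constrained to lie in distinct parts of the $R_i^\ast(m_2)$), bringing its total contribution strictly above $\gamma m_2/2$ before it becomes frozen. Consequently the residual $R_i^\ast(m_2) \setminus \bigcup_j L_j^\ast$ is only a $\bigl(\gamma/2 + (s-1)m_{1.5}/m_2,\, m_2\bigr)$-regular blow-up, not the $(\gamma/2, m_2)$-regular blow-up you claim ``automatically'' follows. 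This is exactly the overshoot the paper guards against by setting the ``dangerous'' threshold at $\gamma m_2/4$ and then verifying that each new blow-up adds at most $sm_{1.5} \le \gamma m_2/4$ vertices to any one part, so the final contribution stays below $\gamma m_2/2$. Your argument works once the threshold is tightened to, say, $\gamma m_2/4$ or $\gamma m_2/2 - (s-1)m_{1.5}$; the rest of the bookkeeping ($o(n)$ vertices in saturated or used positions, hence $|V''| = (1-o(1))n$) is sound under the hierarchy $\eta \ll 1/m_2 \ll 1/m_{1.5} \ll 1/s, \gamma$.
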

		\begin{proof}
			We will achieve this by greedily saturating each $v\in L$ one by one with suitable blow-ups, never using more than $(\gamma/2)m_2$ vertices from any of the parts of the blow-ups $R_1^\ast(m_2), \dots, R_{\ell}^\ast(m_2)$.
			Observe that this immediately implies that  $R_i^\ast(m_2)\setminus \bigcup_{j\in [\ell']} L_j^\ast(m_{1.5})$ is a $(\gamma/2, m_2)$-regular blow-up of $R_i$ as needed.
			
			Suppose we proceed with the above plan for as long as possible, and suppose that there remains an uncovered vertex $v\in L$.
			Call a vertex of $H$ \defn{unavailable} if it has already been used by other blow-ups to cover the vertices of $L\setminus \set{v}$ and observe that, using $\abs{L} \le \eta n$, there are at most $\eta n(sm_{1.5}) \leq \sqrt{\eta}n$ unavailable vertices, where we used that $\eta \ll 1/m_{1.5}$.
			Moreover, using $\eta \ll \gamma$, among all parts of the blow-ups $R_1^\ast(m_2), \dots, R_{\ell}^\ast(m_2)$, at most $\frac{\eta n (sm_{1.5})}{\gamma m_2/4} \le  \frac{\sqrt{\eta} n(sm_{1.5})}{m_2}$ contain at least $\gamma m_2/4$ unavailable vertices, and we call the vertices that are contained in such parts \defn{dangerous}.
			Note that, since each part has size $m_2$, there are then at most $\sqrt{\eta}n (sm_{1.5})$ dangerous vertices.
			\fw{In particular, using $\eta\ll 1/m_{1.5} \ll 1/s$, there are at most $\sqrt{\eta} n + \sqrt{\eta}n(sm_{1.5}) \le \eta^{1/4} n$ vertices which are unavailable or dangerous.}
			\fw{We conclude that no more than $\eta^{1/4} n \cdot n^{s-2} = \eta^{1/4} n^{s-1}$ distinct $(s-1)$-subsets of $V(H)$ contain a vertex which is dangerous or unavailable.}

			By \cref{lem:robustmoregeneral} applied with $T=\set{v}$,
			there exist at least $(1-e^{-\sqrt{s}})\binom{n-1}{s-1}$ many distinct $(s-1)$-subsets $S$ of $V(H) \setminus \set{v}$ with $H[\set{v} \cup S]\in \mathcal{P}(\eps/2,k)$. Denote by $Q$ the auxiliary $(s-1)$-graph on $V(H) \setminus \set{v}$ where, for an $(s-1)$-subset $S \subseteq V(H) \setminus \set{v}$, we have $S \in E(Q)$ if and only if $S$ does not contain any dangerous or unavailable vertex and $H[\set{v} \cup S]\in \mathcal{P}(\eps/2,k)$.
			\fw{Then, by the above calculations, $e(Q) \ge (1-e^{-\sqrt{s}}) \binom{n-1}{s-1} - \eta^{1/4} n^{s-1} \ge n^{s-1}/s!$ and by \cref{thm:erd64} (applied with $\gamma=1/s!$ and $m=m_2$), we find a copy $K$ of $K_{s-1}^{(s-1)}(m_2)$ inside $Q$.}
			
			Consider the auxiliary $s$-partite $k$-graph $\mathcal{A}$ obtained from $H$ as follows: the parts are the $(s-1)$ parts of $K$ and the last part is given by the vertex $v$ blown-up to $m_2$ new vertices.
			Then $\mathcal{A}$ satisfies the hypothesis of \cref{lem:pigeonhole} (with $\mathcal{P}=\mathcal{P}(\eps/2,k)$): indeed, it is $s$-partite, each part has size $m_2$ and each $s$-partite edge of $\mathcal{A}$ induces a member of $\mathcal{P}$ by definition of $Q$.
			Therefore, $\mathcal{A}$ contains a consistent $m_{1.5}$-regular blow-up of some $P \in \mathcal{P}$, which corresponds to a $(0,m_{1.5})$-nearly-regular blow-up in $H$ of the desired form saturating~$v$. Indeed, this blow-up uses at most $m_{1.5}s \le \gamma m_2/4$ vertices and thus the union of all the blow-ups does not use more than $\gamma m_2/4+\gamma m_2/4=\gamma m_2/2$ vertices from any part of the blow-ups $R_1^\ast(m_2), \dots, R_{\ell}^\ast(m_2)$, where the argument is valid as the blow-up saturating $v$ does not use any dangerous vertex by definition of $Q$. Therefore we can extend the blow-up collection and cover $v$, a contradiction.
		\end{proof}
		
		Redefine $R_i^\ast(m_2) \coloneqq R_i^\ast(m_2)\setminus \bigcup_{j\in [\ell']} L_j^\ast(m_{1.5})$. Observe also that we can partition each $R_i^\ast(m_2)$ into $\Theta(m_2/m_{1.5})$ parts, each part being a $(\gamma, m)$-regular blow-up of $R_i$ for some $m\in [m_{1.5},2m_{1.5}]$. For convenience, we consider an arbitrary linear ordering $B_1^\ast, \dotsc, B_t^\ast$ of all the blow-ups we have constructed so far and observe that such sequence satisfies properties \ref{blowup_1} through \ref{blowup_3} of the lemma with $\gamma, m_{1.5}, 2m_{1.5}$ in place of $\gamma, m_1,m_2$, and trivially property~\ref{blowup_4} holds as well since the blow-ups are pairwise vertex-disjoint.
		
		It remains to find some connecting blow-ups so that we can ensure~\ref{blowup_5} as well. For a given~$B_i^*$, we denote by $B_i$ the underlying graph (which is an $s$-vertex graph from $\mathcal{P}(\eps/2,k)$).
		
		\begin{claim}\label{claim:glue}
			There exists $s$-vertex $k$-graphs $C_1,\dotsc, C_t \in \mathcal{P}(\eps/2, k)$ such that $H$ contains vertex-disjoint $m_1$-regular blow-ups of each of the $C_i$, say $C_i^\ast(m_1)$, so that for each $i\in [t-1]$
			$V(B_i^\ast)\cap  V(C_i^\ast(m_1))$ \textup{(}respectively,  $V(C_i^\ast(m_1))\cap  V(B_{i+1}^\ast)$\textup{)} has size $k$, is disjoint with their singleton parts \textup{(}if they exist\textup{)} and induces an edge in $B_i^\ast$ and $C_i^\ast$ \textup{(}respectively, $C_i^\ast$ and $B_{i+1}^\ast$\textup{)}. Furthermore, for each $i\in [t]$, $B_i^\ast\setminus \bigcup_{j\in [t-1]} C_j^\ast(m_1)$ is a $(2\gamma, m)$-regular blow-up of $B_i$ for some $m\in [m_1,m_2]$.
		\end{claim}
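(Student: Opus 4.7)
The plan is to construct $C_1^*,\dotsc,C_{t-1}^*$ sequentially, each acting as a ``bridge'' between $B_i^*$ and $B_{i+1}^*$ via a pair of rooted edges. At step $i$, I would maintain a forbidden set $F_i$ comprising all vertices used by the previously constructed $C_j^*$'s, the singleton vertices of the $B_j^*$, and vertices of any part of any $B_j^*$ that has become ``overused'' in the sense of the load-balancing argument in the proof of \cref{claim:saturatingL}; since $t=O(n/(sm_{1.5}))$ and each $C_j^*$ has only $sm_1$ vertices, $|F_i|=o(n)$ and the load on any single part of any $B_j^*$ stays $O(m_1)$ throughout. I then pick edges $e_i\in E(B_i^*)$ and $f_i\in E(B_{i+1}^*)$ disjoint from each other, $F_i$, and the singleton parts, which is possible because each $B_j^*$ has $\Omega(m_{1.5}^k)$ edges.

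Setting $R=e_i\cup f_i$ (of size $2k$), \cref{lem:robustmoregeneral} applied with $T=R$ tells us that all but an $e^{-\sqrt s}$-fraction of the $(s-2k)$-subsets $S\subset V(H)\setminus V(B_i^*)\setminus V(B_{i+1}^*)\setminus F_i$ satisfy $H[R\cup S]\in \cP(\eps/2,k)$, with $e_i$ and $f_i$ appearing as edges of $H[R\cup S]$. Pigeonholing over the at most $2^{s^k}(2k)!$ labelled isomorphism types fixes a specific $s$-vertex $C_i\in \cP(\eps/2,k)$ with two designated disjoint edges $e_i^C,f_i^C$ so that at least $\mu n^{s-2k}$ of the sets $S$ induce $C_i$ under the bijection $e_i\mapsto e_i^C$, $f_i\mapsto f_i^C$. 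To turn this into a blow-up, I apply \cref{thm:erd64} to the $(s-2k)$-partite hypergraph of labelled compatible $(s-2k)$-tuples to extract a $K_{s-2k}^{(s-2k)}(m_2)$, and then use \cref{lem:pigeonhole} on the $s$-partite auxiliary graph in which each $r_l$ is replaced by $m_2$ duplicates, exactly as in the singleton blow-up trick from \cref{claim:saturatingL}. This produces a consistent blow-up of $C_i$ whose $2k$ rooted parts are singletons $\{r_l\}$ and whose $s-2k$ non-rooted parts have size $m_1$ and lie in $V(H)\setminus V(B_i^*)\setminus V(B_{i+1}^*)\setminus F_i$.

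The main hurdle is to upgrade this $(0,m_1)$-nearly-regular structure to a strict $m_1$-regular one by enlarging each rooted singleton $\{r_l\}$ by $m_1-1$ companions, while keeping $|V(B_i^*)\cap V(C_i^*)|=k$ and $|V(B_{i+1}^*)\cap V(C_i^*)|=k$; the companions must therefore be taken from $V(H)\setminus V(B_i^*)\setminus V(B_{i+1}^*)\setminus F_i$, and they must preserve the blow-up property across every edge of $C_i$ that passes through a rooted position. The edges $e_i^C$ and $f_i^C$ themselves are painless on the all-root transversal (which is just $e_i$ or $f_i$) and on the all-companion transversal, which can be produced by a direct application of \cref{thm:erd64} to the $k$-partite codegree link of $e_i$ (resp.\ $f_i$); the technical work lies in picking the companions so that mixed root--companion transversals across the remaining edges of $C_i$ also lie in $E(H)$, which I expect to be achievable by an iterated rooted extraction inside the common link of the already-chosen non-rooted parts, exploiting $\comin(C_i)\geq (1/2+\eps/2)s$ via further applications of \cref{lem:robustmoregeneral} and \cref{thm:erd64}. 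Once the companions are in place, the load-balancing bookkeeping shows that $B_j^*\setminus \bigcup_i V(C_i^*)$ is a $(2\gamma,m)$-regular blow-up of $B_j$ for some $m\in [m_1,m_2]$, which completes the claim.
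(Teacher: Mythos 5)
Your scaffolding — a sequential construction with a forbidden set, an application of \cref{lem:robustmoregeneral} to a $2k$-vertex root $T=e_i\cup f_i$, then extracting blow-ups via \cref{thm:erd64} and \cref{lem:pigeonhole} — closely tracks the paper's argument, but you depart at the extraction step in a way that introduces a real gap.

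The paper applies \cref{lem:rooted-blow-ups} with $\cV$ taken to be $2k$ \emph{whole parts} of the relevant $B^\ast$'s (each of size roughly $m_{1.5}$, with $m_{1.5}$ in the role of $m_2$), so the resulting consistent $m_1$-regular blow-up $C_j^\ast$ has its rooted parts as $m_1$-\emph{sets} inside those $B$-parts; the exact size-$k$ overlap is then realised by trimming the $B_i^\ast$'s at the very end of the proof of \cref{lem:findblowupchain}. You instead root in the singletons $\{r_1\},\dotsc,\{r_{2k}\}$, which gives a structure with $2k$ singleton parts — not an $m_1$-regular blow-up, and not even nearly-regular in the paper's sense (which allows at most one singleton). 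Your proposed fix, to enlarge each rooted singleton by $m_1-1$ companions via some "iterated rooted extraction", is exactly where the argument stops being a proof. For the enlarged structure to be a genuine blow-up of $C_i$, every companion of $r_l$ must form an edge of $H$ with every transversal of the other parts on every edge of $C_i$ through position $l$; on $e_i^C$ and $f_i^C$, whose positions are all rooted, the $k$ companion sets must themselves span a complete $k$-partite $k$-graph disjoint from $V(B_i^\ast)\cup V(B_{i+1}^\ast)$ while also linking correctly to the already-chosen $m_1$-parts on the mixed edges. This does not follow from a routine application of \cref{thm:erd64} or \cref{lem:robustmoregeneral}: the constraints couple the $2k$ new parts with each other and with the $s-2k$ fixed parts, and the anchored vertices $r_l$ are forced to lie inside the $B$'s so their neighbourhoods cannot be freely chosen. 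The clean route, and the one the paper takes, is to apply \cref{lem:rooted-blow-ups} with $\cV$ being whole $B$-parts, accept a $km_1$-vertex overlap with each adjacent $B^\ast$, and let the subsequent redefinition of the $B_i^\ast$'s carve that overlap down to a single $k$-edge.
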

		
		The proof is similar to the proof of \cref{claim:saturatingL}.
		\begin{proof}[Proof of \cref{claim:glue}]
			Define a new constant $\zeta$ with $1/m_1\ll \zeta \ll 1/s,\gamma$.
			For each $i\in [t]$, pick an edge in $B_i$ and let $b_{\text{left}}^i$ be the corresponding set of $k$ parts of $B_i^\ast$.
			Since  $B_i^\ast$ has at most one `exceptional' part of size exactly $1$, we can assume that the parts of  $b_{\text{left}}^i$ all have size $(1\pm\gamma)m$ for some $m \in [m_{1.5},2m_{1.5}]$.
			We pick $b_{\text{right}}^i$ in the same way, but disjoint of $b_{\text{left}}^i$.\footnote{We remark that, for $i=1$, we will only need $b_{\text{right}}^i$ and, for $i=t$, we will only need $b_{\text{left}}^i$.}
			We will find inductively $C_1, \dotsc, C_{j-1}$ and corresponding blow-ups as in the statement of the claim while additionally ensuring that
			\begin{enumerate}[label = (\alph*)]
				\item  \label{item:claim_dangerous} for each $i\in [t]$, the blow-ups $C_1^\ast(m_1), \dots, C_{j-1}^\ast(m_1)$ do not use more than $\gamma$-fraction of each part of $B_i^\ast$.
			\end{enumerate}
			
			Suppose this has been done for $C_1, \dots, C_{j-1}$.
			\fw{Call a vertex $v$ \defn{special} if it belongs to $B_j^\ast \cup B_{j+1}^\ast$,} call $v$ \defn{unavailable} if it has already been used for one of the blow-ups of $C_1, \dots, C_{j-1}$ and call $v$ \defn{dangerous} if the fraction of unavailable vertices in the part of the blow-ups $B_1^\ast, \dots, B_t^\ast$ containing $v$ is more than $\sqrt{\zeta}$.
			We now bound the number of special, unavailable and dangerous vertices.
			
			First observe that, since $B_i^\ast$ has order at least $(s-1)(1-\gamma)m_{1.5}+1 \geq sm_{1.5}/2$ for each $i$, we have $j \le t\leq n/(sm_{1.5}/2)$.
			It follows that the number of unavailable vertices is at most $t sm_1 \leq \zeta n$.
			\fw{Recalling in addition that each part of the $B_i^\ast$ has size at most $2(1+\gamma)m_{1.5}$, we have that the number of special vertices is at most $2s  \cdot 2(1+\gamma)m_{1.5} \le \zeta n$, where we used that $1/n \ll 1/m_{1.5} \ll \zeta,1/s$.}
			Moreover, with $P$ being uniformly chosen among all parts of the blow-ups $B_i^\ast$, we have
			\begin{equation*}
				\mathbb{E}_P \biggl[\frac{\abs{P\cap \bigcup_{j'\leq j-1} V(C_{j'}^\ast(m_1))}}{\abs{P}}\biggr] \leq \zeta\, .
			\end{equation*}
			Note that $\frac{\abs{P\cap \bigcup_{j'\leq j-1} V(C_{j'}^\ast(m_1))}}{\abs{P}}$ is the fraction of unavailable vertices contained in the part $P$.
			Therefore, by Markov's inequality, at most $\sqrt{\zeta}$-fraction of all parts (across all the blow-ups~$B_i^\ast$) contain more than $\sqrt{\zeta}$-fraction of unavailable vertices. Since  there are at most $\frac{n}{(1-\gamma)m_{1.5}}$ parts and each part has size at most $2(1+\gamma)m_{1.5}$, the number of dangerous vertices is at most $\sqrt{\zeta} \cdot \frac{n}{(1-\gamma)m_{1.5}} \cdot (2(1+\gamma)m_{1.5}) \le 3\sqrt{\zeta}n$.
			\fw{In particular, there are at most $ \zeta n + \zeta n+3\sqrt{\zeta}n\le  4\sqrt{\zeta} n$ vertices which are special, unavailable or dangerous, where we used $\zeta \ll 1/s$. We conclude that no more than $4\sqrt{\zeta} n \cdot n^{s-2k - 1} = 4\sqrt{\zeta} n^{s-2k}$ distinct $(s-2k)$-subsets of $V(H)$ contain a vertex which is special, unavailable or dangerous.}
			
			Let $T$ be any subset of vertices of $H$ of size $2k$ obtained by picking one vertex from each part of $b^{j-1}_{\text{right}}$ and $b^{j}_{\text{left}}$.
			Then \cref{lem:robustmoregeneral} and the calculation above imply that the number of  $(s-2k)$-subsets $S$ of $V(H) \setminus T$, not containing any special, unavailable or dangerous vertex and such that $H[T\cup S]\in \mathcal{P}(\eps/2,k)$, is at least
			\begin{equation}
				\label{eq:lemma3.3_applicable}
				\fw{(1-e^{-\sqrt{s}}) \binom{\abs{V(H) \setminus T}}{s-2k} - 4\sqrt{\zeta} n^{s-2k} \ge n^{s-2k}/s!\, .}
			\end{equation}
			
			Let $G$ be the $k$-subgraph of $H$ obtained after removing all special, unavailable and dangerous vertices.
			\fw{We apply \cref{lem:rooted-blow-ups} to $G$ with $m_{1.5}, m_1, 2k, 1/s!$ playing the roles of $m_2,m_1, q, \mu$, with $\mathcal{V}$ being the collection of the parts in $b^{j-1}_{\text{left}}$ and $b^{j}_{\text{right}}$, and with $\cP=\cP(\eps/2,k)$.} Observe that the hypotheses of \cref{lem:rooted-blow-ups} are satisfied by inequality~\eqref{eq:lemma3.3_applicable}.
			Therefore there is a family $\cU$ of $s-2k$ pairwise disjoint $m_1$-sets and disjoint with $\cV$ such that the $s$-partite graph induced by $\cV \cup \cU$ contains a consistent $m_1$-regular blow-up of some $P \in \cP(\eps/2,k)$.
			Denote this blow-up by $C_j^\ast$ and observe it satisfies the condition in the statement of the claim.
			\fw{Moreover, it satisfies \ref{item:claim_dangerous} by the following argument.}
			The blow-up~$C_j^\ast$ uses $(s-2k) m_1$ new vertices and all such vertices are not dangerous.
			Given a part $B$ of the blow-up $B_i^\ast$, recall that $(1-\gamma)m_{1.5} \le \abs{B} \le 2(1+\gamma)m_{1.5}$.
			Then the number of vertices of such part used by the blow-ups $C_1^\ast, \dots, C_{j}^\ast$ is at most $\sqrt{\zeta}\abs{B}+(s-2k)m_1 \le \gamma \abs{B}$, using $\zeta \ll \gamma$ and $1/m_{1.5} \ll 1/m_1, 1/s, \gamma$.
			
			The second part of the claim follows directly from \ref{item:claim_dangerous} (for $j=t$).
		\end{proof}
		
		We now redefine $B_i^\ast$ as its subset obtained by removing all vertices in $\bigcup_{i \in [t-1]} C_i^\ast$ with the exception of the vertices contained in the $k$-edges guaranteed by \cref{claim:glue}. Then, the sequence of blow-ups
		\begin{equation*}
			B_1^\ast, C_1^\ast, B_2^\ast,C_2^\ast, B_3^\ast,\dotsc, C_{t-1}^\ast, B^\ast_{t}
		\end{equation*}
		with their underlying graphs
		\begin{equation*}
			B_1, C_1, B_2, C_2, B_3, \dotsc, C_{t-1}, B_t
		\end{equation*}
		satisfies the properties \ref{blowup_1} through \ref{blowup_5} in place of $F_1^\ast, \dotsc, F^\ast_\ell$ and $F_1, \dotsc, F_\ell$ respectively (and with $2\gamma$ in place of $\gamma$).
	\end{proof}

	\section{Geometric observations}
	\label{sec:geometric_obs}
	
	Recall that a \defn{simplicial $k$-sphere} is a homogeneous simplicial $k$-complex that is homeomorphic to $\bS^k$. In this section we will first introduce some ways to make new simplicial spheres from old ones and then show that certain $k$-graphs contain spanning copies of $\bS^{k - 1}$ (see \cref{lem:partitesphere,lem:tightpathsphere}).
	
	The first operation that makes new simplicial spheres is gluing along a facet.
	
	\begin{remark}\label{rmk:gluecommonface}
		Let $\cK$ and $\cK'$ both be simplicial $k$-spheres whose intersection is a $k$-simplex $F$. Let $\cK''$ be the simplicial complex whose vertex-set is $V(\cK) \cup V(\cK')$ and whose simplices are those of $\cK$ and $\cK'$ except $F$. This operation glues $\cK$ and $\cK'$ on $F$ and so $\cK''$ is a simplicial $k$-sphere.
	\end{remark}
	
	Given a topological space $X$, the \defn{suspension} of $X$ is obtained by taking the cylinder $X \times [0, 1]$, contracting $X \times \set{0}$ to a single point, and contracting $X \times \set{1}$ to a single point. Crucially, the suspension of $\bS^{k - 1}$ is $\bS^k$ (for $k \geq 1$). We now define the analogous operation for simplicial complexes.
	
	\begin{definition}[Suspension]
		Let $\cK$ be a homogeneous simplicial $(k - 1)$-complex. The \defn{suspension} $\cK'$ of $\cK$ is a homogeneous simplicial $k$-complex whose vertex-set is $\set{u, v} \cup V(\cK)$, where $u$ and $v$ are new vertices, and whose set of simplices is
		\begin{equation*}
			\set{\set{u} \cup F \colon F \in E(\cK)} \cup \set{\set{v} \cup F \colon F \in E(\cK)}.
		\end{equation*}
	\end{definition}
	
	For example, the suspension of a 4-cycle is an octahedron.
	
	\begin{lemma}\label{lem:suspensions}
		The suspension of a simplicial $(k - 1)$-sphere is a simplicial $k$-sphere.
	\end{lemma}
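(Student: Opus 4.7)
The plan is to realize the combinatorial suspension $\cK'$ as the topological suspension of the geometric realization of $\cK$, and then invoke the standard fact that suspension turns $\bS^{k - 1}$ into $\bS^{k}$. Since the geometric realization of $\cK$ is homeomorphic to $\bS^{k - 1}$ by hypothesis, this will immediately yield the statement.

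I would begin by verifying that $\cK'$ is itself a homogeneous simplicial $k$-complex. Every facet of $\cK'$ has the form $\{u\} \cup F$ or $\{v\} \cup F$ for some facet $F$ of $\cK$. Since $\cK$ is homogeneous of dimension $k - 1$ (its facets have exactly $k$ vertices), every facet of $\cK'$ has exactly $k + 1$ vertices, so $\cK'$ is homogeneous of dimension $k$ as required.

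Next I would decompose the geometric realization of $\cK'$ as a union $C_u \cup C_v$, where $C_u$ consists of all points lying in simplices of $\cK'$ that contain the vertex $u$, and $C_v$ is defined analogously. Each point of $C_u$ admits a unique expression as a convex combination $t u + (1 - t) x$ with $t \in [0, 1]$ and $x$ a point of the geometric realization of $\cK$, exhibiting $C_u$ as the topological cone on the geometric realization of $\cK$ with apex $u$; likewise $C_v$ is the cone with apex $v$. A simplex of $\cK'$ that contains neither $u$ nor $v$ must be a face of some $\{u\} \cup F$ or $\{v\} \cup F$, and is therefore a face of the facet $F$ of $\cK$; hence the subcomplex of $\cK'$ induced on the vertex set of $\cK$ is precisely $\cK$, so $C_u \cap C_v$ equals the geometric realization of $\cK$. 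Gluing two topological cones on a space $X$ along $X$ is by definition the topological suspension of $X$, and suspending $\bS^{k - 1}$ gives $\bS^{k}$.

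The main obstacle, which is really only bookkeeping, is to check that the simplicial structure of $\cK'$ produces no simplices beyond those listed (and their automatic faces), so that $C_u \cap C_v$ is exactly the geometric realization of $\cK$ and not something larger, and to confirm that the two conical parametrisations agree continuously on this common base. Both points are immediate from the explicit definition of the simplex set of $\cK'$ in the statement of the suspension.
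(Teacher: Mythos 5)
Your proof takes essentially the same route as the paper: both realize the combinatorial suspension $\cK'$ as the topological suspension of the geometric realization of $\cK$ and then invoke the standard fact that suspending $\bS^{k-1}$ yields $\bS^k$. Your version spells out the double-cone decomposition $C_u \cup C_v$ a bit more explicitly where the paper just asserts it after choosing a concrete embedding, but the underlying argument is the same.
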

	
	\begin{proof}
		Let $\cK$ be a simplicial $(k - 1)$-sphere. Embed $\cK$ in $\bR^k$ and consider it to be in the codimension one flat $x_1 = 0$ in $\bR^{k + 1}$. Add two vertices $u$ and $v$ at $(1, 0, \dotsc, 0)$ and $(-1, 0, \dotsc, 0)$, respectively. For each simplex $F$ of $\cK$, its suspension $\cK'$ has the simplices $F \cup \set{u}$ and $F \cup \set{v}$. The topological space induced by the simplices of $\cK'$ is the suspension of the topological space induced by the simplices of $\cK$ and so is $\bS^k$.
	\end{proof}
	
	We now show that certain $k$-partite $k$-graphs contain spanning copies of $\bS^{k - 1}$.
	Given positive integers $k$ and $a_1, \dotsc, a_k \ge 1$, we denote by \defn{$K_k^{(k)}(a_1, \dotsc, a_k)$} the complete $k$-partite $k$-graph with parts of size $a_1, \dotsc, a_k$.
	
	\begin{lemma}\label{lem:partitesphere}
		For any $k \geq 2$, the following $k$-partite $k$-graphs contain spanning copies of $\bS^{k - 1}$\textup{:}
		\begin{enumerate}[label = \textup{(}\alph*\textup{)}]
			\item \label{itm:partitesphere-a} $K_k^{(k)}(2, \dotsc, 2, 2, \ell, \ell)$ for any $\ell \geq 2$,
			\item \label{itm:partitesphere-b} $K_k^{(k)}(2, \dotsc, 2, 3, \ell, \ell)$ for any $\ell \geq 3$.
		\end{enumerate}
	\end{lemma}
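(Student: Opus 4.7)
The plan is to prove both statements by induction on $k$, reducing to small base cases via the suspension operation of \cref{lem:suspensions}. The key observation is that if $\mathcal{K}$ is a spanning simplicial $(d - 1)$-sphere in $K_d^{(d)}(a_1, \dots, a_d)$, then the suspension of $\mathcal{K}$ adds two new vertices which form a new part of size $2$, producing a spanning simplicial $d$-sphere in $K_{d+1}^{(d+1)}(2, a_1, \dots, a_d)$. Iterating this operation $k - 2$ times reduces part (a) to the case $k = 2$, and iterating it $k - 3$ times reduces part (b) to the case $k = 3$.

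The base case of (a) is immediate: $K_{\ell,\ell}$ contains a Hamilton cycle for every $\ell \geq 2$, and this Hamilton cycle is a spanning copy of $\bS^1$.

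For the base case of (b), with parts $A = \{u, v, w\}$, $B = \{b_1, \dots, b_\ell\}$, and $C = \{c_1, \dots, c_\ell\}$ with $\ell \geq 3$, I will construct the spanning $2$-sphere explicitly. Let $\mathcal{C}$ denote the $2\ell$-cycle $b_1 c_1 b_2 c_2 \dots b_\ell c_\ell$, which I view as the equator. The north cap is the fan from $u$, giving the $2\ell$ faces $\{u\} \cup e$, one for each edge $e$ of $\mathcal{C}$. For the south cap, I will use the $3$-partite chord $\{b_1, c_2\}$ to split the bounded disk into two sub-disks: the $4$-cycle $b_1 c_1 b_2 c_2$, capped by the fan from $v$ (four faces), and the $(2\ell - 2)$-cycle $c_2 b_3 c_3 \dots b_\ell c_\ell b_1$, capped by the fan from $w$ (yielding $2\ell - 2$ faces). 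By construction, each face is $3$-partite, every equator edge lies in exactly one $u$-face and one $v$- or $w$-face, and the chord $\{b_1, c_2\}$ lies in one $v$-face and one $w$-face. A short case check verifies that every vertex has a cycle as link, so the complex is a triangulated closed $2$-manifold, and a computation of the Euler characteristic $V - E + F = (3 + 2\ell) - (6\ell + 3) + (4\ell + 2) = 2$ confirms that it is homeomorphic to $\bS^2$.

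The main obstacle is the base case of (b): because $u, v, w$ all lie in the same part, no two of them may share a face, so the clean suspension-of-a-cycle construction used for (a) does not generalise directly. This forces splitting one hemisphere along an interior $3$-partite chord and filling the two sub-disks with fans from $v$ and $w$ separately, which is precisely what the requirement $\ell \geq 3$ accommodates (so that the $w$-fan is nondegenerate).
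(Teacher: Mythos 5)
Your inductive framework via suspension is identical to the paper's: both reduce part \ref{itm:partitesphere-a} to $k = 2$ and part \ref{itm:partitesphere-b} to $k = 3$ using \cref{lem:suspensions} and the observation that suspending $K_d^{(d)}(a_1, \dotsc, a_d)$ yields $K_{d+1}^{(d+1)}(2, a_1, \dotsc, a_d)$; the base case of \ref{itm:partitesphere-a} (Hamilton cycle in $K_{\ell, \ell}$) is also the same. Where you diverge is the base case of part \ref{itm:partitesphere-b}. You build a spanning $2$-sphere in $K_3^{(3)}(3, \ell, \ell)$ from scratch: equator $b_1 c_1 \dotsm b_\ell c_\ell$, a north cap fanned from $u$, and a south cap split along the chord $\{b_1, c_2\}$ into a $4$-cycle fanned from $v$ and a $(2\ell - 2)$-cycle fanned from $w$, verified to be a closed surface (every link a cycle) of Euler characteristic $(3 + 2\ell) - (6\ell + 3) + (4\ell + 2) = 2$. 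The paper instead reuses part \ref{itm:partitesphere-a}: it takes the spanning $\bS^2$ in $K_3^{(3)}(2, \ell - 1, \ell - 1)$, picks a facet $u_1 u_2 u_3$ with one vertex per part, and subdivides it into seven facets using three new vertices $v_1, v_2, v_3$ placed in parts $1, 2, 3$, thereby reaching $K_3^{(3)}(3, \ell, \ell)$. Both arguments are correct; yours is more explicit and self-contained, while the paper's is shorter and avoids the link/Euler verification by leaning on a local subdivision of an already-known sphere (valid since $\ell - 1 \geq 2$ when $\ell \geq 3$).
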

	
	\begin{proof}
		Note that the suspension of $K_k^{(k)}(a_1, \dotsc, a_k)$ is $K_{k + 1}^{(k + 1)}(2, a_1, \dotsc, a_k)$. Hence, if we show that $K_2^{(2)}(\ell, \ell)$ (where $\ell \geq 2$) contains a spanning copy of $\bS^1$ and $K_3^{(3)}(3, \ell, \ell)$ (where $\ell \geq 3$) contains a spanning copy of $\bS^2$, the statement of the lemma follows by induction (using \cref{lem:suspensions}).
		
		Let $\ell \geq 2$. Observe that $K_2^{(2)}(\ell, \ell)$ contains a spanning copy of $C_{2 \ell}$ which is homeomorphic to $\bS^1$, as desired for part~\ref{itm:partitesphere-a}.
		
		Now, let $\ell \geq 3$. $K_3^{(3)}(2, \ell - 1, \ell - 1)$ contains a spanning copy of a $\bS^2$ by part~\ref{itm:partitesphere-a} (alternatively, note that $K_3^{(3)}(2, \ell - 1, \ell - 1)$ contains a spanning $(2\ell - 2)$-gonal bipyramid). Fix a particular facet $u_1 u_2 u_3$ of this spanning copy of $\bS^2$ where $u_1$ is in the first part of $K_3^{(3)}(2, \ell - 1, \ell - 1)$,~$u_2$ is in the second part, and $u_3$ is in the third. Subdivide the facet $u_1 u_2 u_3$ into facets $v_1 u_2 u_3$, $u_1 v_2 u_3$, $u_1 u_2 v_3$, $u_1 v_2 v_3$, $v_1 u_2 v_3$, $v_1 v_2 u_3$, and $v_1 v_2 v_3$ as shown in \cref{fig:triangulation}.
		
		\begin{figure}[H]
			\centering
			\begin{subfigure}[m]{0.3\textwidth}
				\centering
				\begin{tikzpicture}
					\foreach \pt in {1, 2, 3}{
						\tkzDefPoint(120*\pt - 30:2){u_\pt}
						\tkzDrawPoint(u_\pt)
					}
					\tkzDrawPolygon(u_1,u_2,u_3)
					\tkzLabelPoint[above](u_1){$u_1$}
					\tkzLabelPoint[below left](u_2){$u_2$}
					\tkzLabelPoint[below right](u_3){$u_3$}
				\end{tikzpicture}
			\end{subfigure}%
			$\rightarrow$
			\begin{subfigure}[m]{0.3\textwidth}
				\centering
				\begin{tikzpicture}
					\foreach \pt in {1, 2, 3}{
						\tkzDefPoint(120*\pt - 30:2){u_\pt}
						\tkzDrawPoint(u_\pt)
						\tkzDefPoint(120*\pt +150:0.4){v_\pt}
						\tkzDrawPoint(v_\pt)
					}
					\tkzDrawPolygon(u_1,u_2,u_3)
					\tkzDrawPolygon(v_1,v_2,v_3)
					\tkzDrawSegments(u_1,v_2 u_1,v_3 u_2,v_1 u_2,v_3 u_3,v_1 u_3,v_2)
					\tkzLabelPoint[above](u_1){$u_1$}
					\tkzLabelPoint[below left](u_2){$u_2$}
					\tkzLabelPoint[below right](u_3){$u_3$}
					\tkzLabelPoint[below](v_1){$v_1$}
					\tkzLabelPoint[above right=-2pt](v_2){$v_2$}
					\tkzLabelPoint[above left=-2pt](v_3){$v_3$}
				\end{tikzpicture}
			\end{subfigure}
			\caption{}\label{fig:triangulation}
		\end{figure}
		
		Putting $v_i$ into the $i$th part for each $i$ shows that $K_3^{(3)}(3, \ell, \ell)$ contains a spanning copy of~$\bS^2$, as required for part~\ref{itm:partitesphere-b}.
	\end{proof}
	
	We now show that certain blow-ups of tight paths contain large copies of $\bS^{k - 1}$ with special properties. We say that a copy $S$ of $\bS^{k - 1}$ in a blow-up of a tight path $P$ is \defn{doubly edge-covering} if there are two families $\set{f_e \colon e \in E(P)}$, $\set{f'_e \colon e \in E(P)}$ of facets of $S$ such that each family is vertex-disjoint, and, for each $e \in E(P)$, $f_e \neq f'_e$ and $\phi(f_e) = \phi(f'_e) = e$. For positive integers $a_1, \dotsc, a_\ell$ (where $\ell \geq k \geq 2$), \defn{$P_\ell^{(k)}(a_1, \dotsc, a_\ell)$} denotes a blow-up of the $k$-uniform tight path on $\ell$ vertices where the $i$th vertex has been blown-up by $a_i$.
	
	\begin{lemma}\label{lem:thin-path}
		For $k\geq 2$, the blow-up $P_{k + 1}^{(k)}(1, 2, 2, \dotsc, 2, 2, 1)$ contains a spanning copy of $\bS^{k - 1}$ that is doubly edge-covering.
	\end{lemma}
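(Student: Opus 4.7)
The plan is to proceed by induction on $k$, using the suspension operation of \cref{lem:suspensions}. For the base case $k=2$, the blow-up $P_3^{(2)}(1,2,1)$ is a $4$-cycle on vertices $\{a,p,q,b\}$, where $a$, $b$ are the endpoint singletons and $\{p,q\}$ is the interior pair. This $C_4$ is itself a spanning copy of $\bS^1$, and its two perfect matchings $\bigl\{\{a,p\},\{b,q\}\bigr\}$ and $\bigl\{\{a,q\},\{b,p\}\bigr\}$ form the two required vertex-disjoint families of facets, yielding the doubly edge-covering property directly.

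For the inductive step, suppose we have a spanning doubly edge-covering simplicial $(k-2)$-sphere $\cK$ inside $P_{k}^{(k-1)}(1,2,\ldots,2,1)$, with endpoint singletons $a_{\mathrm{old}}$, $b_{\mathrm{old}}$ and interior pairs $V_2^{\mathrm{old}}, \ldots, V_{k-1}^{\mathrm{old}}$. I apply \cref{lem:suspensions} to obtain a simplicial $(k-1)$-sphere $\cK'$ on vertex set $\{u,v\}\cup V(\cK)$. The central move is to realize $\cK'$ as a spanning subcomplex of $P_{k+1}^{(k)}(1,2,\ldots,2,1)$ via the identification: $u$ and $v$ become the new endpoint singletons $a$ and $b$ at positions $1$ and $k+1$; the old endpoint singletons $\{a_{\mathrm{old}}, b_{\mathrm{old}}\}$ become the new interior pair $V_k$ at position $k$; and $V_i^{\mathrm{old}}=V_i$ for $2\le i\le k-1$. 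Because every facet of $\cK$ contains exactly one of $a_{\mathrm{old}}$, $b_{\mathrm{old}}$ together with one vertex from each of $V_2^{\mathrm{old}}, \ldots, V_{k-1}^{\mathrm{old}}$, adjoining $u$ produces a $k$-set projecting onto the first edge of the new tight path, while adjoining $v$ produces one projecting onto the second; hence every facet of $\cK'$ is an edge of $P_{k+1}^{(k)}(1,2,\ldots,2,1)$.

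The doubly edge-covering families for $\cK'$ are obtained by lifting those of $\cK$. If $\{f_{e_1^{\mathrm{old}}}, f_{e_2^{\mathrm{old}}}\}$ and $\{f_{e_1^{\mathrm{old}}}', f_{e_2^{\mathrm{old}}}'\}$ are the two families in $\cK$, I set
\begin{equation*}
\bigl\{\{u\}\cup f_{e_1^{\mathrm{old}}},\ \{v\}\cup f_{e_2^{\mathrm{old}}}\bigr\} \quad \text{and} \quad \bigl\{\{u\}\cup f_{e_1^{\mathrm{old}}}',\ \{v\}\cup f_{e_2^{\mathrm{old}}}'\bigr\}.
\end{equation*}
Each pair is vertex-disjoint since $u$, $v$ are new vertices with $u\ne v$ and the underlying inductive pair is vertex-disjoint in $\cK$; the distinctness $f_{e_i^{\mathrm{new}}} \ne f_{e_i^{\mathrm{new}}}'$ is inherited immediately from $\cK$.

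I expect the main obstacle to be the careful bookkeeping of the identification rather than any substantive combinatorial difficulty. The suspension adds two vertices that appear in every facet of $\cK'$, but in a tight-path blow-up the only vertices with this ``antipodal'' behaviour are the endpoint singletons. Consequently, the old endpoint singletons must be re-assigned to an interior pair of the longer tight path, with the two suspension vertices taking their place at the new endpoints. Verifying that this re-shuffling is simultaneously consistent with the simplicial structure produced by suspension and with the $k$-partite structure of $P_{k+1}^{(k)}(1,2,\ldots,2,1)$ is the one step that requires genuine attention; everything else is formal.
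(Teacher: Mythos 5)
Your proof is correct. It relies on the same underlying mechanism as the paper's argument (suspension, via \cref{lem:suspensions}, with the same $C_4$ base case), but you organize it differently. The paper observes that $P_{k+1}^{(k)}(1,2,\dotsc,2,1)$ is the suspension of $K_{k-1}^{(k-1)}(2,\dotsc,2)$, pulls a spanning sphere out of the latter via \cref{lem:partitesphere}, and then must assert (``by considering the proof of \cref{lem:partitesphere}'') that the transversals $u_1\dotsc u_{k-1}$ and $v_1\dotsc v_{k-1}$ can both be taken as facets. You instead run a self-contained induction on $k$: the key insight is the re-indexing in which the two suspension points become the new endpoint singletons while the old endpoint singletons coalesce into the new interior pair at position $k$. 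This works because both types of edges of $P_k^{(k-1)}(1,2,\dotsc,2,1)$ — those through the old left singleton and those through the old right singleton — land on the same set of positions $\{2,\dotsc,k\}$ under your identification, so every facet of the suspension projects onto one of the two edges of the longer tight path. What you gain is avoiding the need to inspect the internals of \cref{lem:partitesphere}; the doubly edge-covering families lift cleanly from the inductive hypothesis, since $f_{e_1^{\mathrm{old}}}$ necessarily contains $a_{\mathrm{old}}$ and $f_{e_2^{\mathrm{old}}}$ necessarily contains $b_{\mathrm{old}}$, so prepending $u$ and $v$ respectively produces facets over the two new edges. Both proofs are of comparable length; yours is somewhat more transparent about where the facet control comes from.
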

	\begin{proof}
		For $k = 2$, $P_{3}^{(2)}(1, 2, 1)$ is a 4-cycle $u_0 u_1 u_2 v_1$ where $u_i$, $v_i$ are in the $i$th part. The two families $\set{u_0 u_1, v_1 u_2}$ and $\set{u_0 v_1, u_1 u_2}$ show that the 4-cycle (which is homeomorphic to $\bS^1$) is doubly edge-covering. Now let $k \geq 3$ and consider $P_{k + 1}^{(k)}(1, 2, 2, \dotsc, 2, 2, 1)$ which has vertices $u_0, u_1, \dotsc, u_k, v_1, \dotsc, v_{k - 1}$ and underlying tight path $b_0 \dotsc b_k$ where $\phi(u_i) = \phi(v_i) = b_i$ for all $i$. This $P_{k + 1}^{(k)}(1, 2, 2, \dotsc, 2, 2, 1)$ is the suspension of the $K_{k - 1}^{(k - 1)}(2, 2, \dotsc, 2)$ that has parts $(\set{u_i, v_i} \colon 1 \leq i \leq k - 1)$. By \cref{lem:partitesphere}, this $K_{k - 1}^{(k - 1)}(2, 2, \dotsc, 2)$ contains a spanning copy $S'$ of $\bS^{k - 2}$. Furthermore, by considering the proof of \cref{lem:partitesphere}, we may assume that $u_1 \dotsc u_{k - 1}$ and $v_1 \dotsc v_{k - 1}$ are both facets of $S'$. Thus, relying on \cref{lem:suspensions}, $P_{k + 1}^{(k)}(1, 2, \dotsc, 2, 2, 1)$ contains a spanning copy $S$ of $\bS^{k - 1}$ whose facets include $u_0 u_1 \dotsc u_{k - 1}$, $u_0 v_1 \dotsc v_{k - 1}$, $u_1 \dotsc u_{k - 1} u_k$, $v_1 \dotsc v_{k - 1} u_k$ ($S$ is the suspension of $S'$). The underlying tight path only has two edges, $b_0 \dotsc b_{k - 1}$ and $b_1 \dotsc b_k$, and the two families $\set{u_0 u_1 \dotsc u_{k - 1}, v_1 \dotsc v_{k - 1} u_k}$ and $\set{u_0 v_1 \dotsc v_{k - 1}, u_1 \dotsc u_{k - 1} u_k}$ show that $S$ is doubly edge-covering.
	\end{proof}
	
	\begin{lemma}\label{lem:growing-path}
		Let $\ell - 1 \geq k \geq 2$.
		If the blow-up  $P_\ell^{(k)}(a_1, \dotsc, a_\ell)$ contains a spanning copy of~$\bS^{k - 1}$ that is doubly edge-covering, then so does $P_{\ell + 1}^{(k)}(1, a_1 + 1, \dotsc, a_{k - 1} + 1, a_k, \dotsc, a_\ell)$.
	\end{lemma}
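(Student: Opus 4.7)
The plan is to attach the small spanning doubly edge-covering sphere produced by \cref{lem:thin-path} onto the given sphere $S$ along a chosen facet. Let the underlying tight path of the new blow-up have vertices $b_0, b_1, \dots, b_\ell$, so that the only edge in the new path not already present in the old is $e'_0 = b_0 b_1 \cdots b_{k-1}$. Let $w_0$ be the unique vertex of the new part $b_0$, and, for $1 \le i \le k-1$, let $w_i \in b_i$ denote the single extra vertex added to $b_i$; these are precisely the new vertices in the new blow-up. Write $f_{e_1} = x_1 x_2 \cdots x_k$ (with $x_i \in b_i$) for the first-family facet of $S$ projecting onto the old edge $e_1 = b_1 \cdots b_k$.

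Apply \cref{lem:thin-path} to obtain a spanning doubly edge-covering sphere $T$ in $P_{k+1}^{(k)}(1,2,\dots,2,1)$. From the proof of that lemma, the two doubly edge-covering families of $T$ may be taken to be $\set{u_0 u_1 \cdots u_{k-1},\, v_1 \cdots v_{k-1} u_k}$ and $\set{u_0 v_1 \cdots v_{k-1},\, u_1 \cdots u_{k-1} u_k}$. Embed $T$ into the new blow-up via $u_0 \mapsto w_0$, $u_i \mapsto x_i$ for $1 \le i \le k$, and $v_i \mapsto w_i$ for $1 \le i \le k-1$. Since $w_0, w_1, \dots, w_{k-1}$ are new, the image of $T$ meets $S$ precisely in the $(k-1)$-simplex $x_1 \cdots x_k$, which appears in $S$ as $f_{e_1}$ and in $T$ as the image of $u_1 \cdots u_k$. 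By \cref{rmk:gluecommonface}, removing this common facet and taking the union yields a simplicial $(k-1)$-sphere $S^{\mathrm{new}}$; it spans the new blow-up because the only genuinely new vertices, $w_0,\dots,w_{k-1}$, are supplied by $T$.

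It remains to construct the two doubly edge-covering families for $S^{\mathrm{new}}$. For every old edge $e_i$ with $i \ge 2$, retain the old facets $f_{e_i}$ and $f'_{e_i}$. For the new edge $e'_0$, use the two $T$-facets $w_0 x_1 \cdots x_{k-1}$ and $w_0 w_1 \cdots w_{k-1}$. The delicate case is $e_1$: the gluing removes $f_{e_1}$ from the first family of $S$ and the corresponding $T$-facet $u_1 \cdots u_k$ from the second family of $T$. Replace these with the surviving $T$-facet $w_1 \cdots w_{k-1} x_k$ (first family) and the untouched old facet $f'_{e_1}$ (second family). Vertex-disjointness within each family follows from the vertex-disjointness of the old families, the vertex-disjointness of $T$'s families, and the fact that the $w_i$ are new while the two introduced $T$-facets only meet $S$ in vertices of $f_{e_1}$, from which the old first family is already disjoint. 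The distinctness requirement $f_e^{\mathrm{new}} \ne {f'_e}^{\mathrm{new}}$ is witnessed by a $w$-coordinate on $e'_0$ and on $e_1$, and is inherited from $S$ elsewhere. The main point requiring care is this bookkeeping around $e_1$, namely ensuring that exactly one facet from each of $S$ and $T$ survives the gluing and that their combination preserves intra-family disjointness.
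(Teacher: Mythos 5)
Your proof is correct and follows essentially the same approach as the paper's: glue the sphere from \cref{lem:thin-path} onto $S$ along a common facet projecting to $b_1\cdots b_k$, then rebuild the two doubly-edge-covering families by combining the surviving facets from $S$ and $T$. The only (inessential) difference is that you identify $f_{e_1}$ with $T$'s second-family facet $u_1\cdots u_k$, whereas the paper identifies $f'_{e_1}$ with $T$'s first-family facet — a symmetric choice that leads to the same bookkeeping with the roles of the two families swapped.
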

	\begin{proof}
		Suppose that $P_\ell^{(k)}(a_1, \dotsc, a_\ell)$ contains a doubly edge-covering spanning copy $S$ of~$\bS^{k - 1}$ with corresponding families of vertex-disjoint facets $\set{f_e \colon e \in E(P)}$, $\set{f'_e \colon e \in E(P)}$ where $P$ is the underlying tight path. By \cref{lem:thin-path}, $P_{k + 1}^{(k)}(1, 2, 2, \dotsc, 2, 1)$ contains a doubly edge-covering spanning copy $T$ of $\bS^{k - 1}$ with corresponding families of vertex-disjoint facets $\set{g_e \colon e \in E(Q)}$, $\set{g'_e \colon e \in E(Q)}$ where $Q$ is the underlying tight path.
		Identify $P$ as $b_1 \dotsc b_\ell$, $Q$ as $b_0 \dotsc b_k$, and identify facet $f'_{b_1 \dotsc b_k}$ with facet $g_{b_1 \dotsc b_k}$ (but keep all other vertices of $S$ and $T$ distinct). Glue $S$ and $T$ along the common facet $f'_{b_1 \dotsc b_k} = g_{b_1 \dotsc b_k}$. By \cref{rmk:gluecommonface}, the resulting simplicial complex $U$ is homeomorphic to $\bS^{k - 1}$. Furthermore, $U$ spans $P_{\ell + 1}^{(k)}(1, a_1 + 1, \dotsc, a_k + 1, a_{k + 1}, \dotsc, a_\ell)$ with underlying tight path $R = b_0 b_1 \dotsc b_\ell$.
		
		Finally, we claim that the two families $\cF = \set{f_e \colon e \in E(P)} \cup \set{g_e \colon e \in E(Q), e \neq b_1 \dotsc b_k}$ and $\cF' = \set{f'_e \colon e \in E(P), e \neq b_1 \dotsc b_k} \cup \set{g'_e \colon e \in E(Q)}$ witness that $U$ is doubly edge-covering. Each edge of $R$ is an edge of $P$ or an edge of $Q$ or an edge of both (if the edge is $b_1 \dotsc b_k$) and so, for every edge of $e \in E(R)$, there is exactly one facet $h_e \in \cF$ and exactly one facet $h'_e \in \cF'$ with $\phi(h_e) = \phi(h'_e) = e$. Furthermore, $h_e \neq h'_e$ since $f_e \neq f'_e$ for all $e \in E(P)$ and $g_e \neq g'_e$ for all $e \in E(Q)$. We now check that the family $\cF$ is vertex-disjoint. Each of the families $\set{f_e \colon e \in E(P)}$ and $\set{g_e \colon e \in E(Q), e \neq b_1 \dotsc b_k}$ is vertex-disjoint. By construction, $V(S) \cap V(T) = V(g_{b_1 \dotsc b_k})$ and so the only vertices that can be in both $f_e, g_{e'} \in \cF$ are those in $g_{b_1 \dotsc b_k}$. However $\set{g_e \colon e \in E(Q)}$ is a vertex-disjoint family and so every facet in $\set{g_e \colon e \in E(Q), e \neq b_1 \dotsc b_k}$ is vertex-disjoint from $g_{b_1 \dotsc b_k}$ and so is vertex-disjoint from every facet in $\set{f_e \colon e \in E(P)}$. Thus $\cF$ is vertex-disjoint. Similarly, $\cF'$ is vertex-disjoint, as required.
	\end{proof}
	
	\begin{lemma}\label{lem:tightpathsphere}
		For $\ell - 1 \geq k \geq 2$, the blow-up $P_\ell^{(k)}(k, \dotsc, k)$ contains a \textup{(}not necessarily spanning\textup{)} copy $S$ of $\bS^{k - 1}$ that is doubly edge-covering.
	\end{lemma}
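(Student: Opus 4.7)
The plan is to iterate Lemma \ref{lem:growing-path}, starting from the base case supplied by Lemma \ref{lem:thin-path}, while tracking the sequence of blow-up sizes to ensure every part stays of size at most $k$. Let $S_0 = (1, 2, 2, \dotsc, 2, 1)$ be the sequence of length $k + 1$; by Lemma \ref{lem:thin-path}, $P_{k+1}^{(k)}(S_0)$ contains a doubly edge-covering spanning copy of $\bS^{k-1}$. For each $j \geq 1$ I would apply Lemma \ref{lem:growing-path} to obtain a doubly edge-covering spanning copy of $\bS^{k-1}$ in $P_{k+1+j}^{(k)}(S_j)$, where $S_j$ is obtained from $S_{j-1}$ by prepending a $1$ and adding $1$ to its first $k-1$ entries. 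After $\ell - k - 1$ iterations, one lands in $P_\ell^{(k)}(S_{\ell - k - 1})$; provided every entry of $S_{\ell - k - 1}$ is at most $k$, this blow-up is a subgraph of $P_\ell^{(k)}(k, \dotsc, k)$, and since both blow-ups share the same underlying tight path on $\ell$ vertices, the sphere embeds into the latter as a (non-spanning) doubly edge-covering copy.

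The key verification is that every entry of each $S_j$ is at most $k$, which I would handle by induction on $j$. A direct computation shows that for $0 \leq j \leq k-2$ the sequence is
\begin{equation*}
S_j = (1, 2, \dotsc, j+1, \underbrace{j+2, \dotsc, j+2}_{k-1-j \text{ terms}}, j+1, \dotsc, 2, 1),
\end{equation*}
with peak $j+2 \leq k$; for $j \geq k-2$ the peak saturates at $k$ and
\begin{equation*}
S_j = (1, 2, \dotsc, k-1, \underbrace{k, \dotsc, k}_{j-k+3 \text{ terms}}, k-1, \dotsc, 2, 1).
\end{equation*}
In either regime, positions $1$ through $k-1$ of $S_j$ are bounded above by $k - 1$, so the incrementing step used to form $S_{j+1}$ produces values at most $k$, closing the induction.

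The main (mild) obstacle is the case analysis at the transition $j = k-2$, where the peak first hits $k$: one must check that subsequent iterations merely extend the middle plateau of $k$'s rather than creating an entry exceeding $k$. This follows directly from the observation above, since only positions $1$ through $k-1$ are incremented and these always carry values at most $k-1$. With the induction in place, $\ell - k - 1$ applications of Lemma \ref{lem:growing-path} deliver the desired doubly edge-covering copy of $\bS^{k-1}$ inside $P_\ell^{(k)}(k, \dotsc, k)$ for every $\ell \geq k + 1$.
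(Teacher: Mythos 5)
Your proof follows the paper's argument exactly: iterate \cref{lem:growing-path} starting from the base case \cref{lem:thin-path}, observe that the resulting sequence of part sizes ascends by one up to a plateau of $k$'s and then descends, so that the final blow-up is a subgraph of $P_\ell^{(k)}(k, \dotsc, k)$. The only difference is that you make the bookkeeping more explicit by verifying inductively that the first $k-1$ entries never exceed $k-1$, which is a cleaner justification for why the sizes stay bounded by $k$.
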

	
	\begin{proof}
		Repeatedly applying \cref{lem:thin-path,lem:growing-path} shows that each of the following contains a spanning copy of $\bS^{k - 1}$ that is doubly edge-covering:
		\begin{align*}
			& P_{k + 1}^{(k)}(1, \underbrace{2, 2, \dotsc, 2, 2}_{k - 1}, 1),                                          \\
			& P_{k + 2}^{(k)}(1, 2, \underbrace{3, 3, \dotsc, 3, 3}_{k - 2}, 2, 1),                                    \\
			& P_{k + 3}^{(k)}(1, 2, 3, \underbrace{4, 4, \dotsc, 4, 4}_{k - 3}, 3, 2, 1),                              \\
			& \qquad \qquad \qquad \qquad \vdots                                                                       \\
			& P_{2k - 1}^{(k)}(1, 2, \dotsc, k - 1, k, k - 1, \dotsc, 2, 1),                                           \\
			& \qquad \qquad \qquad \qquad \vdots                                                                       \\
			& P_\ell^{(k)}(1, 2, \dotsc, k - 1, \underbrace{k, k, \dotsc, k, k}_{\ell - 2k + 2}, k - 1, \dotsc, 2, 1).
		\end{align*}
		Thus $P_\ell^{(k)}(k, \dotsc, k)$ contains a (not necessarily spanning) copy of $\bS^{k - 1}$ that is doubly edge-covering.
	\end{proof}

	\section{Allocation}
	\label{sec:allocation}
	
	This section is dedicated to the proof of \cref{lem:allocation}.
	
	\subsection{Filling the blow-ups with spheres}
	
	The following result allows us to cover the vertices in a suitable blow-up with pairwise vertex-disjoint simplicial spheres.
	For a $k$-graph $R$, we define \defn{$\partial_2 R$} to be the set of pairs of vertices of $R$ which are contained in at least one edge of $R$.
	
	\begin{lemma}\label{lem:filling}
		Let $1/m \ll 1/s \ll \gamma \ll \eps, 1/k \le 1/3$ with $k$, $s$, and $m$ positive integers. Let $R$ be an $s$-vertex $k$-graph without isolated vertices and with $\comin(R) \ge (1/2 + \eps)s$.
		Let $R^\ast$ be {$(\gamma, m)$-regular blow-up} of $R$.
		Let $\set{f_e \colon e \in E(R)}$ be a collection of pairwise vertex-disjoint edges of $R^\ast$ with $\phi(f_e) = e$ for all $e \in E(R)$.
		
		Then there exists a family of copies of $\bS^{k - 1}$, $\set{S_e \colon e \in E(R)}$, such that $f_e$ is a facet of $S_e$ for each $e \in E(R)$ and $\set{V(S_e) \colon e \in E(R)}$ is a partition of $V(R^\ast)$.
	\end{lemma}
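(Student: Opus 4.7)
The plan is to build each sphere $S_e$ entirely within the blow-up of the single edge $e = x_1 \cdots x_k \in E(R)$: that is, for each pair $(e,i)$ with $x_i \in e$ I will select a subset $U_e^{(i)} \subseteq V_{x_i}$ containing the unique vertex of $f_e$ in $V_{x_i}$, subject to the condition that the family $\set{U_e^{(i)} : e \ni x,\ x_i = x}$ partitions $V_x$ for every $x \in V(R)$. The sphere $S_e$ will then be a spanning copy of $\bS^{k-1}$ in the complete $k$-partite $k$-graph on parts $U_e^{(1)}, \dots, U_e^{(k)}$, produced by \cref{lem:partitesphere}. The suspension-based construction in that proof is flexible enough to place any designated $k$-partite edge (here, $f_e$) as a facet.

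By \cref{lem:partitesphere}, the only admissible size tuples are permutations of $(2, \dots, 2, \ell_e, \ell_e)$ with $k-2$ twos or of $(2, \dots, 2, 3, \ell_e, \ell_e)$ with $k-3$ twos, for some $\ell_e \geq 2$. For each $e$ I must therefore designate an unordered pair $\set{\alpha_e, \beta_e} \subseteq e$ of ``large'' positions, occasionally a ``three'' position, and a value $\ell_e$. Letting $A$ be the auxiliary $2$-graph on $V(R)$ with edge set $\set{\set{\alpha_e, \beta_e} : e \in E(R)}$, $d_A(\cdot)$ its degree function, and $t_x$ the number of edges for which $x$ is the ``three'' position, the partition condition at every vertex $x \in V(R)$ becomes
\[
|V_x| \;=\; 2\bigl(\deg_R(x) - d_A(x)\bigr) \;+\; \sum_{e\,:\,x \in \set{\alpha_e,\beta_e}} \ell_e \;+\; t_x.
\]

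To solve this $2$-dimensional matching problem I will invoke classical Dirac on the $2$-uniform shadow $\partial_2 R$. Combining $\comin(R) \geq (1/2 + \eps)s$ with the absence of isolated vertices and \cref{claim:supported_minimum_degree} (applied iteratively down from $(k-1)$-sets to pairs) shows that each vertex of $R$ has at least $(1/2 + \eps)s$ neighbours in $\partial_2 R$; hence $\partial_2 R$ contains a Hamilton cycle $C$. I will use $C$ to fix the pairs $\set{\alpha_e,\beta_e}$ along a spanning selection of edges of $R$, guaranteeing $d_A(x) \geq 1$ for every $x$, and then assign arbitrary in-edge pairs to the remaining edges of $R$. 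Since $|V_x| - 2\deg_R(x) \approx m$ is huge compared with $s$, the system of equations above is wildly underdetermined and admits integer solutions with $\ell_e \geq 2$ and room to spare; small parity mismatches are corrected by setting $t_x = 1$ at a few vertices via \cref{lem:partitesphere}\,(b).

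The main obstacle is precisely this joint assignment: each pair $\set{\alpha_e,\beta_e}$ is constrained to sit inside the specific edge $e$, and the $\ell_e$'s must simultaneously balance all $s$ of the equations above. Because the equations couple every edge with all $k$ of its endpoints, local adjustments do not suffice; the Hamilton cycle in $\partial_2 R$ supplies the global skeleton along which the large weights can be balanced, and the abundance of edges in $R$ delivered by the minimum supported codegree furnishes the slack needed to satisfy the remaining constraints. Once $A$ and the $\ell_e$'s are fixed, the spheres $S_e$ are produced by \cref{lem:partitesphere} and their vertex sets partition $V(R^\ast)$ by construction.
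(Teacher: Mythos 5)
Your overall decomposition is the right one and matches the paper's: build one sphere $S_e$ per edge $e\in E(R)$, living entirely in the blow-up of $e$, with part sizes of the form $(2,\dots,2,\ell_e,\ell_e)$ or $(2,\dots,2,3,\ell_e,\ell_e)$ so that \cref{lem:partitesphere} applies; the problem then reduces to choosing, for each $e$, a ``heavy'' pair inside $e$ and a size $\ell_e$ so that the parts of $R^\ast$ are exactly exhausted. You also correctly note that $\partial_2 R$ has minimum degree at least $(1/2+\eps)s$.

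The gap is in the claim that the resulting system $\sum_{e:\,x\in\set{\alpha_e,\beta_e}}\ell_e = |V_x|-2\deg_R(x)+2d_A(x)-t_x$ is ``wildly underdetermined and admits integer solutions with $\ell_e\geq 2$.'' This is not obviously true and, for the specific structure you propose (a Hamilton cycle $C$ in $\partial_2 R$ carrying the heavy weights), it actually fails. On an even Hamilton cycle the system has the alternating-sum obstruction $\sum_i(-1)^i b_{x_i}=0$, which is generically violated because the $|V_x|$ vary by $\Theta(\gamma m)$ across parts. On an odd Hamilton cycle the system is uniquely solvable over $\mathbb{R}$, but the solution $\ell_e=\tfrac12(b_{x_i}-b_{x_{i+1}}+\dots)$ accumulates the $\Theta(\gamma m)$ fluctuations over $s$ terms; since the hierarchy gives $1/s\ll\gamma$ (so $\gamma s\gg 1$), the error term $\Theta(\gamma m s)$ swamps the main term $m/2$ and can make $\ell_e$ negative. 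Throwing in ``arbitrary in-edge pairs'' for the remaining edges of $R$ adds variables, but you have not exhibited any structural condition on $A$ (non-bipartiteness plus a cut condition, say) that guarantees a nonnegative integer circulation with $\ell_e\geq 2$, and underdeterminedness alone does not: a bipartite $A$ already imposes $\sum_{x\in X}b_x=\sum_{y\in Y}b_y$, which the $\pm\gamma m$ fluctuations will break. The paper avoids the whole question by never solving for the $\ell_e$ directly: after assigning to each pair $p\in\partial_2 R$ a distinct edge $e_p\supset p$ via Hall's theorem, it fixes small cores $A_e$ (two vertices per part) and then finds a perfect matching $M$ in the $2$-graph $F$ on the $\approx sm$ leftover vertices of $R^\ast$ (Dirac at the blow-up scale, where the degree slack is $\Theta(\eps sm)\gg$ everything). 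Each matching edge lies between two parts $\phi^{-1}(a),\phi^{-1}(b)$ with $ab\in\partial_2 R$ and is absorbed into $B_{e_{ab}}$; the sizes $\ell_e=2+|M_{ab}|\geq 2$ then come out automatically, and the exhaustion of each $V_x$ is immediate from $M$ being a perfect matching. If you want to salvage your version, you would need to replace the Hamilton cycle in $\partial_2 R$ by this perfect matching in the large graph $F$, which is exactly the paper's step.
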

	
	\begin{proof}
		We begin by assigning to each pair $p \in \partial_2 R$ an edge $e_p \in E(R)$ such that $p \subs e_p$ for each $p \in \partial_2 R$ and the $e_p$ are pairwise distinct.
		This is possible by the following argument.
		Define an auxiliary bipartite graph $G_R$ with parts $\partial_2 R$ and $E(R)$ where $p \in \partial_2 R$ is joined to $e \in E(R)$ if and only if $p \subs e$. It suffices to find a matching between $\partial_2 R$ and $E(R)$ covering $\partial_2 R$. Consider some $p \in \partial_2 R$. There is at least one edge containing $p$ and so, since $R$ has $\comin(R) \ge (1/2+\eps)s$, $p$ is contained in at least $(1/2 + \eps)s$ edges. In the other direction, every $e \in E(R)$ contains exactly $\binom{k}{2}$ pairs $p \in \partial_2 R$. Thus, in $G_R$, every vertex of $\partial_2 R$ has degree at least $s/2$ while each vertex of $E(R)$ has degree $\binom{k}{2}$. Now $1/s \ll 1/k$ so every vertex in $\partial_2 R$ has degree greater than every vertex in $E(R)$. Thus, by Hall's marriage theorem, there is a matching between $\partial_2 R$ and $E(R)$ covering $\partial_2 R$.
		This gives the required assignment $(e_p \colon p \in \partial_2 R)$.
		
		We now move our attention to $R^\ast$.
		Let us define a family $\{A_e \colon e \in E(R)\}$ of pairwise disjoint $2k$-sets such that for each edge $e= u_1 \dotsc u_k$, we have that $f_e \subseteq A_e$ and $A_e$ has exactly two vertices in each $\phi^{-1}(u_i)$.
		This is possible since the edges $f_e$ are pairwise disjoint, $f_e \subset \phi^{-1}(e)$ for each $e$, and the number of vertices in the union of the $A_e$ is at most $2k \cdot \abs{E(R)} \leq k s^k$, which is much less than the number of vertices in any part.
		
		For an upcoming matching argument, we require that the number of vertices in $R^\ast - \bigcup_e A_e$ is even.
		If this this is not the case, then we alter exactly one of the $A_e$ as follows. Pick some $e^\ast \in E(R)$ that is not in $(e_p \colon p \in \partial_2 R)$ which is possible as $\abs{E(R)}$ is much larger than $\abs{\partial_2 R}$. Write $e^\ast = u^\ast_1 \dotsc u^\ast_k$. Add to $A_{e^\ast}$ one vertex from each of $\phi^{-1}(u_1^\ast)$, $\phi^{-1}(u_2^\ast)$, $\phi^{-1}(u_3^\ast)$ so that the $A_e$ are still pairwise disjoint. Now the number of vertices in $R^\ast - \bigcup_e A_e$ is even.
		
		We claim that the graph $F$ that is induced by $\partial_2 (R^\ast - \bigcup_e A_e)$ has a perfect matching.
		Recall that the order of $F$ is even.
		So by Dirac's theorem, it suffices to show that $F$ has minimum degree at least $\abs{V(F)}/2$.
		Since $R$ has no isolated vertices, each vertex $w \in V(R)$ is contained in at least one edge and thus in a supported $(k-1)$-set.
		Since $\comin(R) \ge (1/2 + \eps)s$, all the $(1/2+\epsilon)s$ other vertices that make an edge with that set are neighbours of $w$ in $\partial_2 R$.
		Therefore $\partial_2 R$ has minimum degree at least $(1/2 +\eps)s$.
		Consider a vertex $v \in V(F)$, and write $x = \phi(v)$.
		Denote the parts of the blow-up $R^\ast$ by $\{U_y\colon y \in V(R)\}$.
		Note that $v$ is adjacent (in $F$) to all vertices in every part $U_y$ with $xy \in E(\partial_2 R)$, except for the elements of the sets $A_e$.
		Since $R^\ast$ is a $(\gamma, m)$-regular blow-up, it follows that
		\begin{align*}
			\delta_{F}(v) & \geq \sum_{y\colon xy \in \partial_2 R}(1/2 +\eps)s \abs{U_x} - \sum_{e\in E(R)} \abs{A_e} \\
			& \geq(1/2 +\eps)s (1-\gamma)m - 3k\tbinom{s}{2}                                             \\
			& \geq s(1+\gamma)m/2                                                                        \\
			& \geq \abs{V(F)}/2,
		\end{align*}
		where the last inequality follows from the choices of the constants.
		In conclusion, $F$ has a perfect matching $M$.
		
		We now add vertices to the $A_e$ to form sets $B_e$ ($e \in E(R)$). Firstly, if $e \not\in \set{e_p \colon p \in \partial_2 R}$, then $B_e \coloneqq A_e$. In particular, $e^\ast$ (if it exists) satisfies $B_{e^\ast} = A_{e^\ast}$. For $e = e_p$ ($p = ab \in \partial_2 R$), let $M_p$ be the sets of edges of the matching $M$ that go between $\phi^{-1}(a)$ and $\phi^{-1}(b)$ and set $B_e \coloneqq A_e \cup V(M_p)$. Observe that, since $M$ is a perfect matching, the sets $B_e$ partition $V(R^\ast)$. Moreover, since $p \subs e_p$ for each $p \in \partial_2 R$, every $B_e$ is a subset of $\phi^{-1}(e)$ and so $R^\ast[B_e]$ is a complete $k$-partite $k$-graph. Counting vertices in each part shows
		\begin{equation*}
			R^\ast[B_e] \cong
			\begin{cases}
				K_k^{(k)}(2,  \dotsc, 2, 3, 3, 3)                               & \text{if } e = e^\ast,                                \\
				K_k^{(k)}(2,  \dotsc, 2, \ell, \ell) \text{ where } \ell \geq 2 & \text{if } e \in \set{e_p \colon p \in \partial_2 R}, \\
				K_k^{(k)}(2,  \dotsc, 2)                                        & \text{otherwise}.
			\end{cases}
		\end{equation*}
		By \cref{lem:partitesphere}, $R^\ast[B_e]$ contains a spanning copy $S_e$ of $\bS^{k - 1}$, for each $e \in E(R)$. Furthermore, the edge $f_e$ is a subset of $A_e \subset B_e$ and so, by symmetry, we may choose $S_e$ such that $f_e$ is a facet of $S_e$. Finally, since the $B_e$ form a partition of $V(R^\ast)$, we have that $\set{V(S_e) \colon e \in E(R)}$ is a partition of $V(R^\ast)$.
	\end{proof}

	\subsection{Connecting spheres}
	
	Now we are ready to finish the proof of \cref{lem:allocation}.
	
	\begin{proof}[Proof of \cref{lem:allocation}]
		Fix constants satisfying $1/m \ll 1/s \ll \gamma \ll \eps,1/k \le 1/3$ such that they also fulfil the requirements of \cref{lem:filling} with $2\gamma$ playing the role of $\gamma$.
		Let $R$ be an $s$-vertex $k$-graph without isolated vertices and with $\comin(R) \ge (1/2+\eps)s$.
		Let $R^\ast$ be a $(\gamma, m)$-nearly-regular blow-up of $R$ and let $f_1,f_2\in E(R^\ast)$ such that $\phi(f_1)$, $\phi(f_2)$ and the part of $R^\ast$ which is a singleton (if it exists) are all disjoint.
		
		If $R^\ast$ contains a singleton, say $v$, then let $e$ be an edge of $R$ that contains $\phi(v)$ and is disjoint from $\phi(f_1)$ and $\phi(f_2)$.
		Further let $u$ be a vertex from $R^\ast$ such that $e'=(e \cup \set{ \phi(u) }) \setminus \set{\phi(v) }$ is an edge of $R$ and $\phi(u)$ is not contained in $\phi(f_1)$ and $\phi(f_2)$.
		Note that these choices are possible as $\comin(R) \geq (1/2 + \eps) s$.
		For each of the $k - 1$ vertices in {$e \setminus \set{\phi(v) }$}, consider its corresponding part in $R^\ast$ and take any two of its vertices.
		The union of these $2(k - 1)$ vertices, together with the vertices $u$ and $v$, gives a copy of $K_{k}^{(k)}(2,\dots,2)$ that contains a spanning copy $S'$ of $\bS^{k - 1}$ by \cref{lem:partitesphere}.
		By symmetry we can assume that there is a facet $f_3$ that does not contain $v$ and note that $\phi(f_3) = e'$ is disjoint from $\phi(f_1)$ and $\phi(f_2)$ by construction.
		We remove the vertices of $S'$ from $R^\ast$, except for those in $f_3$, and we remove $\phi(v)$ from $R$.
		Note that we still have $\comin(R) \geq (1/2 + \eps/4) (s-1)$ and that $R$ has no isolated vertices.
		If~$R^\ast$ does not contain a singleton, let $f_3$ be an arbitrary edge of $R^\ast$ such that $\phi(f_3)$ is disjoint from~$\phi(f_1)$ and $\phi(f_2)$.
		
		By \cref{lem:dirac-to-tightly-connected}, $R$ is tightly connected and thus, by \cref{lem:tight-connectivity-bounded-tight-walk}, we can find a tight walk $W$ of order $t$ with $t \le s^{2k}$, which contains each edge of $R$ as a subwalk.
		This implies that there is a tight path $P$ in $R^\ast$ such that for every $e \in E(R)$ there exists $f \in E(P)$ with $\phi(f) = e$.
		Indeed $P$ can be obtained as follows.
		Let $w_1, w_2, \dotsc, w_t$ be the cyclic order of the vertices giving $W$ and, for each $i \in [t]$, choose a  vertex $u_i \in V(R^\ast)$ such that $\phi(u_i)=w_i$ and all the $u_i$ are pairwise disjoint.
		Then $P=u_1 u_2 \dotsc u_t$ is the desired tight path in $R^\ast$.
		Note that this is possible as the size of each part of $R^\ast$ is at least $(1-\gamma)m$ which is much greater than the order of $W$.
		
		In particular, since $R^\ast$ is a blow-up graph with each part of size at least $(1-\gamma)m$ and $1/m \ll 1/s, \gamma, 1/k$, it follows that $R^\ast$ contains the $(k,0)$-regular blow-up of $P$, which is isomorphic to $P_{t}^{(k)}(k,\dots,k)$.
		Then, by \cref{lem:tightpathsphere} (and since $t \ge k+1$), $R^\ast$ contains a copy $S$ of $\bS^{k - 1}$ that is doubly edge-covering. Let the two families of facets of $S$ witnessing this be $\set{f_e \colon e \in E(R)}$ and $\set{f'_e \colon e \in E(R)}$.
		Without loss of generality, we can assume that $f_1 = f_{\phi(f_1)}'$, $f_2 = f_{\phi(f_2)}'$ and $f_3 = f_{\phi(f_3)}'$ are facets of $S$.
		Note that $S$ uses at most $k \cdot 2s^{k} \le \gamma m$ vertices from each part of $R^\ast$.
		
		We let $\tilde R$ be the blow-up of $R$ obtained from $R^\ast$ by removing $V(S) \setminus \bigcup_{e \in E(R)} V(f_e)$.
		Each part of $\tilde{R}$ has still size in the interval $(1 \pm 2\gamma)m$, so we can apply \cref{lem:filling} with $\set{ f_e \colon e\in E(R) }$ to cover the vertices of $\tilde R$ with copies of $\bS^{k - 1}$, $\set{S_e \colon e \in E(R) }$, such that $f_e$ is a facet of $S_e$ for each $e$ and $\set{V(S_e) \colon e \in E(R)}$ is a partition of $V(\tilde{R})$.
		Finally, by \cref{rmk:gluecommonface}, we can, for each $e \in E(R)$, glue $S_e$ to $S$ on facet $f_e$ and glue $S'$ to $S$ on facet $f_3$ to obtain a spanning copy of $\bS^{k - 1}$ in $R^\ast$, where $f_1$ and $f_2$ are facets.
	\end{proof}
	
	{
		\fontsize{11pt}{12pt}
		\selectfont
		
		\hypersetup{linkcolor={red!70!black}}
		\setlength{\parskip}{2pt plus 0.3ex minus 0.3ex}
		
		\newcommand{\etalchar}[1]{$^{#1}$}
		
	}
	
	\appendix
	
	\section{Covering most vertices with blow-ups}\label{lem:almost-blow-up-cover}
	
	In this section, we show \cref{lem:covering-with-blow-ups} following the original exposition~\cite[Lemma~4.4]{lang2023tiling}.
	We require a simple Dirac-type bound for hypergraph matchings due to Daykin and H\"{a}ggkvist~\cite{DH81} (for a short proof, see~\cite[Lemma~B.2]{langsanhueza2024dirac}).
	
	\begin{lemma}\label{lem:matching}
		Let $ 1/n \ll 1/s, \mu$ with $n$ divisible by $s$.
		Let $P$ be an $n$-vertex $s$-graph with $\delta_1(P) \geq (1 - 1/s + \mu) \tbinom{n-1}{s-1}$.
		Then $P$ has a perfect matching.
	\end{lemma}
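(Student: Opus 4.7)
My plan is to proceed by induction on $n/s$. For the base case $n = s$, the hypothesis gives $\delta_1(P) \ge (1 - 1/s + \mu) \binom{s-1}{s-1} = 1 - 1/s + \mu > 0$, and since vertex-degrees are integers this forces $\delta_1(P) \ge 1$; hence $P$ has at least one edge, which is itself a perfect matching.

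For the inductive step with $n > s$, I would pick any edge $e \in E(P)$ (which exists because $\delta_1(P) > 0$) and let $P' \coloneqq P - V(e)$ denote the induced sub-$s$-graph on the remaining $n - s$ vertices. For each $v \in V(P')$, at most $s$ edges through $v$ are destroyed (one for each $(s-1)$-subset of $V(e)$), so
\begin{equation*}
    \delta_1(P') \ge (1 - 1/s + \mu) \binom{n-1}{s-1} - s.
\end{equation*}
Since $\binom{n-1}{s-1} \ge \binom{n-s-1}{s-1}$, the right-hand side is at least $(1 - 1/s + \mu') \binom{n-s-1}{s-1}$ with $\mu' \coloneqq \mu - s/\binom{n-s-1}{s-1}$, which is positive whenever $n$ is sufficiently large in terms of $s$ and $\mu$. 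The inductive hypothesis then yields a perfect matching $M'$ of $P'$, and $M' \cup \set{e}$ is a perfect matching of $P$.

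The hard part will be keeping $\mu' > 0$ throughout the $n/s$ inductive steps. The correction term $s/\binom{n-s-1}{s-1}$ grows as $n$ decreases, and it equals $s$ in the final step from $n = 2s$ down to $n = s$, which may far exceed the initial $\mu$. To handle this, one runs the induction only down to a threshold $n_0 = n_0(s, \mu)$ chosen so that the cumulative decrease of $\mu$ over the steps with $n > n_0$ stays below $\mu/2$ (feasible since $1/n \ll 1/s, \mu$); for the bounded family of cases with $s \le n \le n_0$, one argues directly, for instance via a maximum-matching-plus-swap approach—assuming $M$ is a maximum matching that is not perfect, with unmatched set $U$ of size at least $s$, derive a contradiction by finding some $f \in M$ together with two vertex-disjoint edges of $P$ inside $U \cup f$ enabling an augmentation, exploiting the still-strong minimum degree condition $\delta_1(P) \ge (1 - 1/s + \mu/2)\binom{n-1}{s-1}$.
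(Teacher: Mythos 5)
The paper does not reprove this lemma; it cites Daykin--H\"{a}ggkvist and points to \cite[Lemma~B.2]{langsanhueza2024dirac} for a short proof, so I will just assess your argument on its own terms.

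Your inductive step contains a concrete counting error that is fatal to the approach. You claim that when $V(e)$ is deleted, ``at most $s$ edges through $v$ are destroyed (one for each $(s-1)$-subset of $V(e)$).'' This counts only the edges $\{v\}\cup T$ with $T\subseteq V(e)$, but an edge through $v$ is destroyed as soon as it meets $V(e)$ in \emph{one} vertex. For $v\notin V(e)$, the number of $(s-1)$-subsets of $V(P)\setminus\{v\}$ meeting $V(e)$ is
\begin{equation*}
\binom{n-1}{s-1}-\binom{n-s-1}{s-1}\;=\;\Theta\!\left(\tfrac{s^2}{n}\right)\binom{n-1}{s-1},
\end{equation*}
which is of order $n^{s-2}$, not $O(1)$. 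Carrying the corrected estimate through, the constant $\mu$ degrades by $\Theta(s/n)$ per removed edge. Summing over the roughly $n/s$ deletion steps from $n$ down to any threshold $n_0$ gives a total degradation of order
\begin{equation*}
\sum_{j\geq 0}\frac{s}{\,n-js\,}\;=\;\Theta\!\left(\log\frac{n}{n_0}\right),
\end{equation*}
which is unbounded as $n\to\infty$. So the problem is not concentrated at small $n$: the cumulative loss already exceeds any fixed $\mu$ long before you reach $n_0$, and choosing $n_0$ proportional to $n$ only caps the loss at a positive constant while leaving a linear-sized instance unresolved. The ``stop at a threshold and handle the bounded cases directly'' repair therefore does not close the gap.

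Your fallback sketch (take a maximum matching $M$, assume it misses a set $U$ of $\geq s$ vertices, and use the degree condition to find an augmenting configuration among $U\cup f$ for some $f\in M$) is essentially the standard route and, if fleshed out, works \emph{uniformly in $n$}; it is not an argument reserved for small $n$. If you want a self-contained proof, I would abandon the greedy edge-removal induction entirely and develop that augmentation argument in full. As written, however, the main line of your proof is based on an incorrect edge count and does not go through.
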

	
	The proof of \cref{lem:covering-with-blow-ups} is then based on the next fact, which tiles a $k$-graph $P$ almost perfectly with constant-sized blow-ups of edges, provided that the minimum degree of $P$ forces a perfect matching (as in \cref{lem:matching}).
	For a $k$-graph $H$ and a family of $k$-graphs $\cR$, an \defn{$\cR$-tiling} is a set of pairwise vertex-disjoint $k$-graphs $R_1,\dots,R_\ell \subset H$ with $R_1,\dots,R_\ell \in \cR$.
	
	\begin{lemma}\label{lem:blow-up-matching}
		Let $1/n \ll \mu,\, 1/b,\, 1/s$.
		Then every $n$-vertex $s$-graph $P$ with $\delta_1(P) \geq (1-1/s+\mu) \binom{n-1}{s-1}$ contains a $K_s^{(s)}(b)$-tiling missing at most $\mu n$ vertices.
	\end{lemma}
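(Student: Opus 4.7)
The plan is to reduce the tiling problem to a perfect-matching problem in an auxiliary hypergraph and then invoke \cref{lem:matching}. Fix a constant $t$ with $1/t \ll \mu, 1/b, 1/s$ and introduce the auxiliary $t$-graph $H$ on vertex set $V(P)$, where a $t$-set $T$ is declared an edge of $H$ whenever $P[T]$ admits a $K_s^{(s)}(b)$-tiling covering at least $(1 - \mu/2)t$ of its vertices. If one can establish $\delta_1(H) \geq (1 - 1/(2t))\binom{n-1}{t-1}$, then \cref{lem:matching} produces a perfect matching $\set{T_1, \dotsc, T_{n/t}}$ in $H$ (after discarding fewer than $t$ vertices for divisibility). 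Extracting the guaranteed inner tiling from each $T_i$ and taking the union yields a $K_s^{(s)}(b)$-tiling of $P$ missing at most $(n/t)(\mu t/2) + t < \mu n$ vertices in total.

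To establish the minimum-degree condition on $H$, fix $v \in V(P)$ and let $S$ be a uniformly random $(t-1)$-subset of $V(P) \setminus \set{v}$. A concentration argument using \cref{lem:concentration}, in the spirit of \cref{lem:robustmoregeneral} but adapted to the minimum $1$-degree rather than to the codegree, shows that $\delta_1(P[S \cup \set{v}]) \geq (1 - 1/s + \mu/2)\binom{t-1}{s-1}$ holds with probability at least $1 - e^{-\Omega(\sqrt{t})}$. Choosing $t$ large enough so that $e^{-\Omega(\sqrt{t})} \leq 1/(2t)$, the required bound on $\delta_1(H)$ reduces to the following inner claim: every $s$-graph on $t$ vertices with minimum $1$-degree at least $(1 - 1/s + \mu/2)\binom{t-1}{s-1}$ contains a $K_s^{(s)}(b)$-tiling covering at least $(1 - \mu/2)t$ of its vertices.

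The inner claim is the main obstacle, but now it lives on a constant number of vertices, so it is a finite combinatorial problem. I would attack it by an iterated greedy procedure: repeatedly extract a $K_s^{(s)}(b)$-copy using \cref{thm:erd64} and delete it, continuing until only $\mu t/2$ vertices remain. The subtlety is that each deletion of $bs$ vertices can concentrate degree losses on individual surviving vertices, potentially undermining the density threshold needed to re-apply \cref{thm:erd64}. To control this, I would either pick each successive copy in a randomised fashion and argue via expected density that the process remains viable for the required number of iterations, or alternatively first apply \cref{lem:matching} inside the $t$-vertex sub-hypergraph to obtain a perfect matching and then group matched edges into $b$-tuples that each span a $K_s^{(s)}(b)$, using a secondary matching step whose feasibility is supplied by a supersaturation count of $K_s^{(s)}(b)$-copies among the matched edges. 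Either route yields the near-perfect inner tiling needed to complete the argument.
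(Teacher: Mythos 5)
Your outer framework is sound: building an auxiliary $t$-graph $H$ on $V(P)$ whose edges are $t$-sets $T$ with $P[T]$ admitting a near-perfect $K_s^{(s)}(b)$-tiling, establishing $\delta_1(H)$ via a concentration argument in the style of \cref{lem:robustmoregeneral}, and then invoking \cref{lem:matching} to assemble the pieces. The difficulty is that this reduction does not make progress: the ``inner claim'' you isolate --- that every $t$-vertex $s$-graph with $\delta_1 \geq (1-1/s+\mu/2)\binom{t-1}{s-1}$ admits a $K_s^{(s)}(b)$-tiling covering $(1-\mu/2)t$ vertices --- is precisely the statement of \cref{lem:blow-up-matching} again, only with $n$ renamed to $t$ and $\mu$ to $\mu/2$. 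Shrinking $n$ to a constant $t$ does not make this a ``finite combinatorial problem'' in any useful sense, because $t$ is chosen with $1/t \ll \mu, 1/b, 1/s$, so the same asymptotic tools are needed.

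Both of your proposed routes to the inner claim have genuine gaps. For the greedy route: \cref{thm:erd64} requires the remaining $s$-graph to retain a positive density, but after deleting $rt$ vertices the number of surviving edges is at least only
\begin{equation*}
	e(Q) - rt\binom{t-1}{s-1} \;\geq\; \Bigl[\tfrac{1-1/s+\mu/2}{s} - r\Bigr]\, t\binom{t-1}{s-1},
\end{equation*}
which becomes nonpositive once $r$ exceeds roughly $1/s$. So a purely density-driven greedy argument cannot cover more than roughly a $1/s$ fraction of the vertices, far short of $1-\mu/2$; randomising the choice of copy does not by itself repair this, and you do not explain why it would. For the matching-plus-grouping route: the step where $b$ disjoint matching edges of $P[T]$ are grouped so that their $bs$ vertices span a copy of $K_s^{(s)}(b)$ is not supported. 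A supersaturation count gives many $K_s^{(s)}(b)$-copies in $P[T]$, but there is no reason those copies should be compatible with the matching (i.e.\ use exactly the vertices of $b$ matched edges, one copy-class per coordinate), and you do not supply the ``secondary matching'' argument that would make this precise.

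The paper's proof takes a different, iterative route (\cref{lem:larger-matching}). Starting from the empty tiling, one partitions $V(P)$ into $m$-sets, builds a reduced $s$-graph on these parts which inherits the degree condition (\cref{cla:reduced}), applies \cref{lem:matching} to get a perfect matching of $m$-sets, and applies \cref{thm:erd64} inside each matched block of $sm$ vertices. Crucially, the previous tiling is recycled into the new one, so each pass increases coverage by at least $2^{-6}\mu^2 n$ vertices, and after $O(1/\mu^2)$ passes (shrinking the blow-up size at each stage) the missing set has size below $\mu n$. This iteration with recycling is the key mechanism that your proposal is missing.
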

	
	To derive \cref{lem:covering-with-blow-ups}, we first apply \cref{lem:blow-up-matching} to cover most vertices of the property $s$-graph with complete partite graphs.
	Then we use \cref{thm:erd64} to distil most vertices of each of the partite graphs into the desired blow-ups.
	
	\begin{proof}[Proof of \cref{lem:covering-with-blow-ups}]
		Introduce $m'$ such that $ 1/n \ll 1/m' \ll 1/k,\,1/s,\, 1/m ,\, \eta$.
		We apply \cref{lem:blow-up-matching} to the $n$-vertex $s$-graph $\PG{H}{ \cP }{s}$ to obtain a $K_s^{(s)}(m')$-tiling missing at most $(\eta/2) n$ vertices.
		Fix a copy $K$ of $K_s^{(s)}(m')$ in this tiling.
		Note that each edge of $K$ corresponds to an $s$-vertex $k$-graph satisfying $\cP$.
		To finish, we have to cover all but at most $(\eta/2) m's$ vertices of $K$ with blow-ups $R^\ast_1(m),\dots,R^\ast_{\ell'}(m)$, where each $R_i$ is an $s$-vertex $k$-graph satisfying $R_i \in \cP$.
		
		We colour each edge $Y$ of $K$ by one of at most $2^{s^{k}}$ colours corresponding to the $k$-graph~$H[Y]$ labelled by the part indices.
		Applying \cref{thm:erd64} to the colour with most edges, we identify a complete $s$-partite subgraph $K' \subset K$ with parts of size $m$ whose edges all correspond to the same $s$-vertex $k$-graph~$R$ with $R \in \cP$.
		In other words, $K'$ is isomorphic to the blow-up~$R^\ast(m)$.
		We take out the vertices of $K'$ and repeat this process until all but at most $(\eta/2) m's$ vertices of~$K$ are covered, as desired.
	\end{proof}
	
	It remains to show \cref{lem:blow-up-matching}.
	A first proof follows from a straightforward application of Szemerédi's (Weak Hypergraph) Regularity Lemma.
	Here we give an alternative (possibly simpler) argument.
	More precisely, we derive \cref{lem:blow-up-matching} by iteratively applying the following result.
	In each step, we turn a tiling $\cB$ with `big' tiles $B$ into a tiling $\cL$ of `little' tiles $L$ covering additional $(\mu/8)^2n$ vertices.
	So we arrive at \cref{lem:blow-up-matching} after about $(8/\mu)^2$ steps.
	
	\begin{lemma}\label{lem:larger-matching}
		Let	$1/n \ll 1/m \ll  1/\ell \ll 1/s, \mu$, and let $P$ be an $s$-graph on $n$ vertices with $\delta_1(P) \geq \left(1-1/s+ \mu \right) \binom{n-1}{s-1}$.
		Set $B = K_{s}^{(s)}(m)$ and $L= K_{s}^{(s)}(\ell)$.
		Suppose that $P$ contains a $B$-tiling $\cB$ on $\lambda n$ vertices with $\lambda \leq 1-\mu/8$.
		Then $P$ contains an $L$-tiling on at least $(\lambda + 2^{-6}\mu^2 ) n$ vertices.
	\end{lemma}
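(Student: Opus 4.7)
Set $U = V(P) \setminus V(\cB)$, so that $\abs{U} = (1-\lambda)n \geq (\mu/8)n$. The plan has two stages: first convert $\cB$ into an $L$-tiling $\cL_0$ of almost the same vertex set, then augment $\cL_0$ by absorbing at least $2^{-6}\mu^2 n$ of the uncovered vertices. For the conversion, each $B_i \cong K^{(s)}_s(m)$ decomposes into $\lfloor m/\ell\rfloor$ pairwise disjoint copies of $L = K^{(s)}_s(\ell)$: slice each of the $s$ parts of $B_i$ into blocks of size $\ell$ and pair the $k$-th block across the parts into an $L$-tile. Since $1/m \ll 1/\ell$, the rounding loss is at most $s\abs{\cB}\ell \leq 2^{-8}\mu^2 n$, so $\cL_0$ covers $\geq \lambda n - 2^{-8}\mu^2 n$ vertices and carries a natural $\ell$-block structure aligned with the $B$-tile parts.

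For the augmentation, I look for swaps of the following form. Within a chosen $B_i$, remove an old $L$-tile $T$ (one $\ell$-block from each of the $s$ parts of $B_i$) from $\cL_0$, and replace it with two new $L$-copies $T_1, T_2$ in $P$ such that $T_1$ uses the $\ell$-block of $T$ in some part $Y_j$ together with $(s-1)\ell$ vertices of $U$ arranged in $s-1$ new parts, while $T_2$ uses the $\ell$-blocks of $T$ in the parts $\{Y_{j'}\colon j'\neq j\}$ together with one more $\ell$-part from $U$. A direct count shows such a swap gains $s\ell$ new $U$-vertices at no net cost to $B_i$ coverage. To guarantee enough such swaps, I encode the search as an auxiliary multipartite $s$-graph $H$ whose vertices are the $\ell$-blocks of $\cL_0$ together with the $\ell$-subsets of $U$, and whose hyperedges mark the existence of $K^{(s)}_s(\ell)$-copies of the shapes demanded by $T_1$ and $T_2$. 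The degree condition $\delta_1(P) \geq (1-1/s+\mu)\binom{n-1}{s-1}$ exceeds the Tur\'an-type $(1-1/s)$-threshold by $\mu$; a double-counting of edges of $P$ incident to $U$ transfers this surplus into edge density $\Omega(\mu)$ for $H$, after which \cref{thm:erd64} produces a large blow-up in $H$. Greedy extraction from this blow-up yields a vertex-disjoint family of $\Theta(\mu^2 n/\ell)$ compatible swap-pairs, whose combined effect is a gain of at least $2^{-6}\mu^2 n$ newly covered vertices.

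The main obstacle is the density estimate for the auxiliary graph $H$: because the surplus $\mu$ in the degree condition is small, the counting must be tight and carefully distribute the ``extra'' edge budget across $U$-vertices and $\cL_0$-blocks so that enough $K^{(s)}_s(\ell)$-copies of both required shapes exist. A secondary concern is ensuring that the chosen swap-pairs can be applied simultaneously to $\cL_0$ without conflicts; this vertex-disjointness is handled by the standard greedy extraction from the large blow-up provided by \cref{thm:erd64}, exploiting that $\ell \ll $ the blow-up part size.
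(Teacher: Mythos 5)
Your plan diverges substantially from the paper's. The paper partitions $V(P)$ into parts of size $m$ (a large constant, with $1/m \ll 1/\ell$), forms a reduced $s$-graph $R$ on the parts whose edges record the partite $s$-tuples carrying at least $4\beta m^s$ edges of $P$ (with $\beta = \mu/8$), shows via a double count (\cref{cla:reduced}) that $R$ inherits the degree hypothesis and hence has a perfect matching by \cref{lem:matching}, and then applies \cref{thm:erd64} inside each matched $s$-tuple to extract an $L$-tiling covering a positive fraction of its $sm$ vertices; what remains of $\cB$ is recycled separately. Your plan instead fixes the $\ell$-block structure from the outset and tries to augment $\cL_0$ by swapping one $L$-tile inside $\cB$ for two new $L$-copies reaching into $U$.

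The crucial gap is the claimed density $\Omega(\mu)$ for the auxiliary graph $H$ on $\ell$-blocks. An edge of $H$, as you define it, requires the full $K^{(s)}_s(\ell)$ to appear across the chosen $\ell$-blocks, i.e.\ all $\ell^s$ partite $s$-sets must be edges of $P$. Since $\ell$ is a constant, that is a saturated requirement on a bounded ($s\ell$-vertex) induced subgraph, and it cannot be extracted from $\delta_1(P)$ by any double count: a reduced-graph argument on constant-sized parts only inherits a positive \emph{density} of partite edges, not completeness, and \cref{thm:erd64} cannot upgrade a fixed positive density to a complete partite graph on a bounded vertex set --- it is an asymptotic statement that needs the host to be large. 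This is precisely why the paper forms the reduced graph on $m$-sized parts with $m \gg \ell$: the inherited degree condition on $R$ supplies dense partite $s$-tuples with $\Theta(m)$ vertices per part, and only then does \cref{thm:erd64} (applied inside each such tuple) convert positive density into $K^{(s)}_s(\ell)$-copies. A secondary concern: you cannot freely randomise the block partition to rescue the density claim, since $\cL_0$ pins the $\ell$-blocks inside $\cB$; and the greedy extraction of vertex-disjoint swap-pairs from a blow-up in $H$ needs further justification when the $H$-vertices arising from $U$ are general $\ell$-subsets rather than parts of a fixed partition. As written, the augmentation step does not go through.
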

	
	\begin{proof}
		For convenience, we assume that $n$ is divisible by $ms$.
		Let $\cU$ be a   partition of $V(P)$ into parts of size $m$, which is obtained by taking the $s$ parts of each copy of $B$ in~$\cB$ and the other parts in an arbitrary fashion.
		Set $\beta = \mu/8$.
		Let $R$ be an $s$-graph with vertex set~$\cU$ and an edge $X$ if $P$ contains at least $4\beta m^{s}$ $X$-partite edges.\footnote{So $R$ plays the role of a reduced graph in the context of a Regularity Lemma.}
		Let $r$ be the number of vertices of $R$, and observe that $r = n/m$.
		\begin{claim}\label{cla:reduced}
			We have $\delta_1(R) \geq \left( 1-1/s+\mu/4 \right) \binom{n/m-1}{s-1}$.
		\end{claim}
		\begin{proof}[Proof of \cref{cla:reduced}]
			Fix a vertex $x$ in $R$.
			There are at most $r^{s-2} m^{s} \leq (\mu/8) m^{s} \binom{r-1}{s-1}$ edges of $P$ that have one vertex in the part of $x$ and more than two vertices in some part.
			Put differently, and using $\delta_1(P) \geq \left(1-1/s+ \mu \right) \binom{n-1}{s-1}$, there are at least $m^s (1-1/s+(3/4)\mu) \binom{r-1}{s-1}$ edges in $P$, which are $\cU$-partite and have a vertex in the part of $x$.
			Every edge of $R$ incident to $x$ can host at most $m^{s}$ of these edges.
			Every non-edge of $R$ incident to $x$ can host at most $4\beta m^s$ of these edges.
			Thus $$m^{s} \deg_R(x) + 4\beta m^s \binom{r-1}{s-1}  \geq m^s (1-1/s+(3/4)\mu) \binom{r-1}{s-1}.$$
			Solving for $\deg(x)$ and recalling that $4\beta = \mu/2$ yields the desired bound.
		\end{proof}
		Hence there is a perfect matching $\cM$ in~$R$ by \cref{lem:matching}.
		For each edge $X$ of $\cM$, we use \cref{thm:erd64} to greedily find an $L$-tiling on at least $2\beta sm$ vertices in the $X$-partite subgraph of $P$ induced by the parts of $X$.
		(This is possible, since an $L$-tiling of order at most $2\beta sm$ intersects with at most $2\beta m^s$ edges whose vertices are covered by~$X$.)
		Denote the union of these `fresh' $L$-tilings by $\cL_{\text{fresh}}$.
		So $\cL_{\text{fresh}}$ covers at least $2\beta (1-\lambda)n \geq 2\beta^2 n$ vertices outside of $V(\cB)$.
		Moreover, $\cL_{\text{fresh}}$ covers the same number of vertices in each part of $\cU$.
		We then pick a maximal $L$-tiling $\cL_{\text{rec}} \subset P$ in $V(\cB) \setminus V(\cL_{\text{fresh}})$ to `recycle' what is left of $\cB$.
		So $\cL_{\text{rec}}$ leaves at most $\ell-1$ vertices uncovered in each of the parts of $\cB$.
		Since $\cB$ is a $B$-tiling on $\lambda n$ vertices with $L = K_{s}^{(s)}(m)$, there are $\lambda n/(sm)$ of these parts.
		Hence the tiling $\cL_{\text{rec}}$ covers all but $(\ell-1)\lambda n/(sm) \leq \beta^2 n$ vertices of $V(\cB) \setminus V(\cL_{\text{fresh}})$.
		It follows that $\cL_{\text{fresh}} \cup \cL_{\text{rec}}$ covers at least $2\beta^2 n + \lambda n - \beta^2 n \geq (\lambda + \beta^2) n$ vertices.
	\end{proof}
	
	We remark that the bounds on the order $n$ in the above argument are quite large (tower-type).
	A more economical approach is discussed in the conclusion of \cite{lang2023tiling} and implemented, for instance, in \cite[Lemma~B.3]{langsanhueza2024dirac}.

\end{document}